%%%%%%%%%%%%%%%  Beginning matter%%%%%%%%

\documentclass[11pt]{amsart}

\usepackage{latexsym}
\usepackage{amsmath}
\usepackage{amsfonts}
\usepackage{amssymb}
\usepackage{tikz,float}
\usepackage{cite}
\usepackage{enumitem}   

\newtheorem{theorem}{Theorem}[section]
\newtheorem{lemma}[theorem]{Lemma}
\newtheorem{corollary}[theorem]{Corollary}
\newtheorem{proposition}[theorem]{Proposition}

\newtheorem{sublemma}{}[theorem]

\theoremstyle{definition}

\theoremstyle{remark}

\numberwithin{equation}{section}

\DeclareMathOperator{\cl}{cl}
\DeclareMathOperator{\si}{si}

\begin{document}

\title[Matroids Arising From Nested Sequences of Flats]{Matroids Arising From Nested Sequences of Flats In Projective And Affine Geometries}

\author{Matthew Mizell}
\address{Mathematics Department \\
Louisiana State University \\ 
Baton Rouge, Louisiana}
\email{mmizel4@lsu.edu}

\author{James Oxley}
\address{Mathematics Department \\
Louisiana State University \\ Baton Rouge, Louisiana}
\email{oxley@math.lsu.edu}

\subjclass{05B35}
\date{\today}

\begin{abstract}
    Targets are matroids that arise from a nested sequence of flats in a projective geometry. This class of matroids was introduced by Nelson and Nomoto, who found the forbidden induced restrictions for binary targets. This paper generalizes their result to targets arising from projective geometries over $GF(q)$. We also consider targets arising from nested sequences of affine flats and determine the forbidden induced restrictions for affine targets. 
\end{abstract}
%%%%%%%%%%%%%%%%%%%%%%%%%%%%%%%%%%%%%%%%%%%%%%%
\maketitle
\section{Introduction}
Throughout this paper, we follow the notation and terminology of [\ref{James}]. All matroids considered here are simple. This means, for example, that when we contract an element, we always simplify the result. An \textit{induced restriction} of a matroid $M$ is a restriction of $M$ to one of its flats.

Let $M$ be a rank-$r$ projective or affine geometry represented over $GF(q)$. We call $(F_0,F_1,\dots,F_k)$ a \textit{nested sequence of projective flats} or a \textit{nested sequence of affine flats} if $\emptyset = F_0 \subseteq F_1 \subseteq \dots \subseteq F_{k-1} \subseteq F_k =  E(M)$ and each $F_i$ is a, possibly empty, flat of $M$. Let $(G,R)$ be a partition of $E(M)$ into, possibly empty, subsets $G$ and $R$. We call the elements in $G$ \textit{green}; those in $R$ are \textit{red}. A subset $X$ of $E(M)$ is \textit{monochromatic} if $X\subseteq G$ or $X\subseteq R$. For a subset $X$ of $E(PG(r-1,q))$, we call $PG(r-1,q)|X$ a \textit{projective target}, or a \textit{target}, if there is a nested sequence $(F_0,F_1,\dots,F_k)$ of projective flats such that $X$ is the union of all sets $F_{i+1}-F_i$ for $i$ even. It is straightforward to check that $PG(r-1,q)|G$ is a target if and only if $PG(r-1,q)|R$ is a target. Because $GF(q)$-representable matroids are not necessarily uniquely $GF(q)$-representable, we have defined targets in terms of $2$-colorings of $PG(r-1,q)$. When $X \subseteq E(AG(r-1,q))$, we call $AG(r-1,q)|X$ an \textit{affine target} if there is a nested sequence $(F_0,F_1,\dots,F_k)$ of affine flats such that $X$ is the union of all sets $F_{i+1} - F_i$ for $i$ even. For affine targets in $AG(r-1,q)$, we follow the same convention of defining targets in terms of $2$-colorings. 

Consider an analogous construction for graphs, that is, take a sequence $(K_0,K_1,\dots,K_n)$ of complete graphs where $K_{i+1}$ has $K_i$ as a subgraph for each $i$ in $\{1,2,\dots,n-1\}$. Moreover, for each such $i$, color the vertex $v$ of $V(K_{i+1}) - V(K_i)$ either green or red and color all the edges of $E(K_{i+1}) - E(K_i)$ the same color as $v$. This process is equivalent to repeatedly adding green or red dominating vertices, that is, adding a green or red vertex $v$ to a graph $G$ that is adjacent to every vertex $u$ in $V(G - v)$. Consider the subgraph $H$ of $K_n$ whose vertex set is $V(K_n)$ and whose edge set is the set of green edges. Observe that, in the construction of $H$, when a red dominating vertex is added, it is an isolated vertex of the graph that has been constructed so far. Therefore, to construct $H$, at each step, we are adding a green dominating vertex or a red isolated vertex. Chvátal and Hammer [\ref{thresholdgraphs}] showed that the class of graphs that arises from repeatedly adding dominating vertices and isolated vertices coincides with the class of threshold graphs. This is the class of graphs that has no induced subgraph that is isomorphic to $C_4,2K_2$, or $P_4$, that is a $4$-cycle, two non-adjacent edges, or a $4$-vertex path.

Nelson and Nomoto [\ref{NELSON}] introduced binary targets and characterized them as follows.
\begin{theorem}\label{binarytarget}
Let $(G,R)$ be a $2$-coloring of $PG(r-1,2)$. Then $PG(r-1,2)|G$ is a target if and only if $it$ does not contain $U_{3,3}$ or $U_{2,3}\oplus U_{1,1}$ as an induced restriction.
\end{theorem}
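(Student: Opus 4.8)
The plan is to recast the target condition in a more flexible form and then argue by induction on $r$. First, a routine argument — delete a flat $F_{i+1}$ from a nested sequence whenever $F_{i+1}-F_i$ and $F_{i+2}-F_{i+1}$ receive the same colour, and prepend copies of $\emptyset$ to correct the parity — shows that $PG(r-1,2)|G$ is a target if and only if there is a chain of flats $\emptyset=F_0\subsetneq F_1\subsetneq\cdots\subsetneq F_t=E(PG(r-1,2))$ in which every set $F_{i+1}-F_i$ is monochromatic. Two immediate consequences I would record: the restriction of a target to a flat $F$ is again a target (intersect the nested sequence with $F$, using that the flats of $PG(r-1,2)|F$ are exactly the sets $Z\cap F$ with $Z$ a flat of $PG(r-1,2)$), so that both the class of targets and the class of matroids avoiding induced $U_{3,3}$ and $U_{2,3}\oplus U_{1,1}$ are closed under induced restriction; and, for necessity, neither $U_{3,3}$ nor $U_{2,3}\oplus U_{1,1}$ is a target, which is a finite check in $PG(2,2)$, where the reformulation forces the green set of a target to be $\emptyset$, a point, a two-point set, a line, a four-point arc, or the complement of one of these.

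For sufficiency I would induct on $r$ (the cases $r\le2$ being easily checked directly), reducing everything to the lemma: if $PG(r-1,2)|G$ has no induced restriction isomorphic to $U_{3,3}$ or $U_{2,3}\oplus U_{1,1}$, then $E-H$ is monochromatic for some hyperplane $H$. Granting this, $PG(r-1,2)|H\cong PG(r-2,2)$, the induced colouring $(G\cap H,R\cap H)$ inherits the hypothesis because $PG(r-1,2)|H$ is an induced restriction of $PG(r-1,2)|G$, so by induction $PG(r-2,2)|(G\cap H)$ is a target; appending $H\subsetneq E$ to a monochromatic-band chain for it (its last band $E-H$ being monochromatic) gives such a chain for $PG(r-1,2)|G$.

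It remains to prove the lemma, which is the heart of the matter. If $G$ or $R$ lies in a hyperplane $H'$, then $E-H'$ is monochromatic and we are done, so assume $G$ and $R$ both span $PG(r-1,2)$; the goal is then to exhibit a forbidden induced restriction. The key assertion is that if both colour classes span, then some rank-$3$ flat $F$ meets each of $G$ and $R$ in a set of rank $3$. A short computation in the Fano plane $PG(r-1,2)|F$ finishes the argument, because the only way a seven-point plane can split into two colour classes each of rank $3$ is as a non-collinear triple of one colour together with the complementary ``line plus a point'' of the other; thus $F\cap G$ is a flat of $PG(r-1,2)|G$ isomorphic to $U_{3,3}$ or to $U_{2,3}\oplus U_{1,1}$, which is the desired induced restriction. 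To find such an $F$ I would induct on $r$: for $r=3$ take $F=E$; for $r\ge4$, if some hyperplane $H$ has both $G\cap H$ and $R\cap H$ of rank $r-1$, then both span $H$ and one applies the assertion inside $PG(r-2,2)|H$.

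The remaining case — every hyperplane of $PG(r-1,2)$ has a colour class of rank at most $r-2$ — is the step I expect to be the main obstacle. Here the observation is that if $R\cap H$ has rank at most $r-2$, it lies in a corank-$2$ flat $K$ of $H$, so the coset $H-K$ is entirely green and (by a small computation) spans $H$; running this over the $r$ hyperplanes spanned by the $(r-1)$-element subsets of a green basis confines $R$ to the union of those $r$ corank-$2$ flats together with one further point, namely the sum of the basis. The plan is to combine this with the mirror-image confinement of $G$ coming from a red basis and deduce that, when $r\ge4$, such a configuration forces one of $G,R$ into a proper flat, contradicting the assumption that both span. Pinning down this counting argument — equivalently, showing that ``every hyperplane has a deficient colour class'' is incompatible with both colour classes spanning $PG(r-1,2)$ for $r\ge4$ — is the point at which I expect the proof to require the most care.
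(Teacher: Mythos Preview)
Your inductive framework—reduce to finding a hyperplane with monochromatic complement, then locate a rank-$3$ flat in which both colours span—is a legitimate alternative to the paper's approach, and the reformulation, necessity direction, and recursive step are all fine. The gap is exactly where you flagged it. The confinement sketch does not close: bounding $R$ by the union of $r$ rank-$(r-2)$ flats plus one point gives only $|R|\le r(2^{r-2}-1)+1$, and combining with the mirror bound for $G$ yields $2^r-1\le 2r(2^{r-2}-1)+2$, which holds for every $r\ge 3$ and so produces no contradiction. There is also a subtler structural issue: any counting on rank-$(r-2)$ flats needs each such flat to be green or red, whereas your ``remaining case'' only guarantees this for hyperplanes. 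A cleaner setup is to recurse into \emph{any} proper flat of rank at least $3$ in which both colours span (this already yields the bad plane), so that the genuine residual case is ``every proper flat is green or red''.

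The paper handles that residual case by a bipartite double count your sketch does not anticipate. Writing $G_1,R_1$ for the sets of green and red hyperplanes and $G_2,R_2$ for the green and red rank-$(r-2)$ flats, one shows that (i) each green hyperplane contains at most one red rank-$(r-2)$ flat (its red points all lie in a single hyperplane of it), and (ii) each red rank-$(r-2)$ flat lies in at most two green hyperplanes (else all three cosets are monochromatic green and $R$ fails to span). Counting cross-coloured incidences in the containment graph between $G_2\cup R_2$ and $G_1\cup R_1$ then gives $\tfrac12|G_1||R_1|\le|G_1|+|R_1|$, whence $\min(|G_1|,|R_1|)\le 4$. Your own observation that the $(r-1)$-subsets of a green basis span $r$ distinct green hyperplanes gives $|G_1|\ge r\ge 4$; feeding $|G_1|=4$ back in forces $|R_1|\le 4$, contradicting $|G_1|+|R_1|=2^r-1$ for $r\ge 4$. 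This double count is the key idea your argument is missing.
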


Nelson and Nomoto [\ref{NELSON}] call $U_{3,3}$ the \textit{claw}, while they call $U_{2,3} \oplus U_{1,1}$, the complement of $U_{3,3}$ in $F_7$, the \textit{anti-claw}. They derive Theorem~\ref{binarytarget} as a consequence of a structural description of claw-free binary matroids. In Section~\ref{projmainthm}, we give a proof of that theorem that does not rely on this structural description. Then, for all $q \geq 3$, we  
characterize targets represented over $GF(q)$ in terms of forbidden induced restrictions by proving the next result.
\begin{theorem}\label{qarytarget}
    For a prime power $q$ exceeding two, let $(G,R)$ be a $2$-coloring of $PG(r-1,q)$. Then $PG(r-1,q)|G$ is a target if and only if it does not contain any of $U_{2,2},U_{2,3},\dots, U_{2,q-2},\text{or } U_{2,q-1}$ as an induced restriction.
    
\end{theorem}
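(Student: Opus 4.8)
The plan is to translate the forbidden restrictions into a condition on lines, handle the easy direction directly, and prove the hard direction by induction on the rank via one structural lemma. A rank-$2$ flat of $PG(r-1,q)|G$ is exactly a set $G\cap\ell$ with $\ell$ a line and $|G\cap\ell|\ge 2$, and it is isomorphic to $U_{2,|G\cap\ell|}$; hence $PG(r-1,q)|G$ omits $U_{2,2},\dots,U_{2,q-1}$ as induced restrictions if and only if every line of $PG(r-1,q)$ meets $G$ in $0,1,q$, or $q+1$ points -- equivalently, every line has at most one green point or at most one red point. I will call such a $2$-colouring \emph{clean} (the notion is symmetric in $G$ and $R$). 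I would also use the routine fact that $PG(r-1,q)|G$ is a target if and only if there is a chain of flats $\emptyset=F_0\subseteq\cdots\subseteq F_m=PG(r-1,q)$ with every layer $F_{i+1}\setminus F_i$ monochromatic, obtained from the definition by merging repeated flats, then merging consecutive layers of the same colour, then using the given $G$--$R$ symmetry to fix the parity.

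For the ``only if'' direction: given a target via $(F_0,\dots,F_k)$ and a line $\ell$, let $i$ be least with $|\ell\cap F_i|\ge 2$. Since a flat meeting $\ell$ in two points contains $\ell$, we get $\ell\subseteq F_i$, so $|\ell\cap F_{i-1}|\le 1$ and $\ell\setminus F_{i-1}\subseteq F_i\setminus F_{i-1}$ is monochromatic of size at least $q$. Thus $\ell$ has at least $q$ points of one colour and at most one of the other, so the colouring is clean and no $U_{2,j}$ with $2\le j\le q-1$ occurs.

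For the ``if'' direction I would induct on $r$; the cases $r\le 2$ are immediate, and for $r\ge 3$ a monochromatic colouring is trivially a target, so it suffices to prove the \textbf{Main Lemma}: a clean, non-monochromatic colouring of $PG(r-1,q)$ has a hyperplane $H$ whose complement in $PG(r-1,q)$ is monochromatic (equivalently, $G$ or $R$ spans a flat of rank less than $r$). Granting this, $H\cong PG(r-2,q)$ inherits a clean colouring, which is a target by induction and hence is witnessed by a chain of flats with monochromatic layers; these are flats of $PG(r-1,q)$, and adjoining $PG(r-1,q)$ adds the monochromatic layer $PG(r-1,q)\setminus H$, so $PG(r-1,q)|G$ is a target.

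The Main Lemma is the crux and where I expect the obstacle. I would argue by contradiction: assume the colouring is clean and non-monochromatic but both $G$ and $R$ span $PG(r-1,q)$, and induct on $r$ (with $r=2$ trivial since cleanness then forces at most one point of some colour). Fix a green point $g$; cleanness makes every line through $g$ all-green, of type $B$ (``$q$ green, one red''), or of type $C$ (``$g$ green, $q$ red''). A key step is to show, using cleanness and $q\ge 3$ -- by proving that in any plane $\cl(\{g,p,p'\})$ with $p,p'$ red the red points are collinear -- that the type-$B$ lines through $g$ form one class of a clean colouring of the quotient $PG(r-1,q)/g\cong PG(r-2,q)$; when that colouring is non-monochromatic with both classes spanning, induction gives the contradiction, and I would reduce the remaining degenerate situations (no type-$C$ line through $g$ with the all-green lines through $g$ trapped in a hyperplane; or every green point lying on a type-$C$ line) to it using dimension counting together with the observation that no hyperplane can be monochromatic -- a monochromatic hyperplane would force the other colour to be an affine subspace of its complement, which cannot span. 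The main obstacle is organizing these plane-level incidence arguments so they close cleanly; I expect the essential planar fact to be that a clean colouring of $PG(2,q)$ always has one colour class contained in a line, and making that airtight is the heart of the proof.
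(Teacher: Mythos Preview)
Your reformulation into ``clean'' colourings and the easy direction are correct, and you are right that the planar fact --- a clean colouring of $PG(2,q)$ has one colour class contained in a line --- is the crux; the paper also proves exactly this as its $r=3$ endgame. However, your approach to the hard direction is genuinely different from the paper's, and it has a gap.

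The paper does \emph{not} induct via a quotient. Instead it takes a minimal non-target, shows $r(G)=r(R)=r$, and runs a global double-counting argument: it builds a bipartite graph on the rank-$(r-2)$ flats and the hyperplanes, proves that each green hyperplane contains at most one red rank-$(r-2)$ flat while each red rank-$(r-2)$ flat lies in at most one green hyperplane, and compares this with the cross-edges forced by intersecting green and red hyperplanes. This yields $|G_1||R_1|/q\le |G_1|+|R_1|$, whence $\min(|G_1|,|R_1|)\le 2q$. A separate construction then manufactures at least $q(q^{r-2}-1)/(q-1)$ green hyperplanes, forcing $r\le 3$; the planar case is finished directly. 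The advantage of this route is that the higher-rank reduction is a single clean inequality rather than an inductive case analysis.

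Your inductive step is where the trouble lies. You correctly show that the quotient $Q=PG(r-1,q)/g$ inherits a clean colouring, and if both classes of $Q$ span then induction gives a contradiction. But when only one class of $Q$ is trapped in a hyperplane $H\ni g$, you need to lift this to ``$G\subseteq H$'' or ``$R\subseteq H$'', and this does not follow automatically. With your classes (type-$B$ versus the rest), neither confinement directly confines $G$ or $R$: if the type-$B$ lines are trapped in $H$, red points on type-$C$ lines can still lie outside $H$; if the all-green and type-$C$ lines are trapped in $H$, both green and red points on type-$B$ lines can lie outside $H$. Even with the more natural colourings (type-$C$ versus the rest, or all-green versus the rest), one of the two possible confinements always fails to lift, leaving a residual case --- essentially ``no type-$C$ line through $g$'' --- that your sketch does not resolve. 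Your proposed tools do not obviously close it: ``dimension counting'' is unspecified, and your argument for ``no monochromatic hyperplane'' is slightly off, since the full affine complement $AG(r-1,q)$ \emph{does} span $PG(r-1,q)$; the correct statement is that the other colour class becomes an affine flat, which either fails to span (if proper) or forces the first colour to equal the hyperplane (if improper). This residual case can in fact be pushed through (for instance by showing that any circuit through $g$ and red points has size at least $r+1$, forcing $g$ to be freely placed over $R$, which is impossible when $R$ spans), but it is a genuine piece of work that your proposal does not supply.
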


In Section 2, we prove some useful properties of targets. In particular, we show that targets are closed under contractions. A simple matroid $N$ is an \textit{induced minor} of a simple matroid $M$ if $N$ can be obtained from $M$ by a sequence of contractions and induced restrictions. We observe that Theorem~\ref{qarytarget} can also be viewed as characterizing targets in terms of forbidden induced minors. Our other main theorems, which are proved in Section \ref{affinetargets}, characterize affine targets in terms of forbidden induced restrictions. There are three cases, depending on the value of $q$.

\begin{theorem}\label{affinetargetq2}
Let $(G,R)$ be a $2$-coloring of $AG(r-1,2)$. Then $AG(r-1,2)|G$ is an affine target if and only if it does not contain $U_{4,4}$ as an induced restriction.
\end{theorem}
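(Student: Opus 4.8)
The plan is to prove the two implications separately; the forward implication — every affine target is $U_{4,4}$-free — is short, while the converse, which I would prove by induction on $r$, is the substantial part.

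\emph{Necessity.} Suppose $AG(r-1,2)|G$ is an affine target, realized by a nested sequence $\emptyset = A_0 \subseteq A_1 \subseteq \cdots \subseteq A_k = E(AG(r-1,2))$ of affine flats, and suppose it has a $U_{4,4}$ induced restriction on a flat $F$; thus $F$ is independent, $|F|=4$, and $\pi := \cl_{AG}(F)$ is a rank-$4$ affine flat with $\pi\cap G = F$, so $\pi\cong AG(3,2)$ and $F$ spans $\pi$. Intersecting with $\pi$ gives a nested sequence $(A_i\cap\pi)$ of affine flats of $\pi$ realizing $F$. Let $j$ be the largest even index with $(A_{j+1}\cap\pi)\setminus(A_j\cap\pi)\neq\emptyset$ (this exists as $F\neq\emptyset$). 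Every nonempty green layer of this sequence lies in $A_{j+1}\cap\pi$, so $F\subseteq A_{j+1}\cap\pi$; as $F$ spans $\pi$ and $A_{j+1}\cap\pi$ is a flat, $A_{j+1}\cap\pi=\pi$. Hence $\pi\setminus(A_j\cap\pi)\subseteq F$, so $|A_j\cap\pi|\ge|\pi|-|F|=4$; being a proper flat of $\pi$, it is a rank-$3$ affine plane, whence $F=\pi\setminus(A_j\cap\pi)$ is the complementary affine plane — contradicting that $F$ is independent of rank $4$.

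\emph{Sufficiency.} Suppose $AG(r-1,2)|G$ has no $U_{4,4}$ induced restriction; I induct on $r$, the cases $r\le 3$ being immediate since then every $2$-colouring of a rank-$\le 3$ affine geometry is readily seen to be an affine target. We may assume $\emptyset\neq G\neq E(AG(r-1,2))$, and that $G$ spans $AG(r-1,2)$ — otherwise apply induction inside the proper affine flat $\cl_{AG}(G)$ and extend the resulting realizing flag to $AG(r-1,2)$ by appending $E(AG(r-1,2))$ (and an extra copy of $\cl_{AG}(G)$ first if parity demands) so that the new layer $E(AG(r-1,2))\setminus\cl_{AG}(G)$ is red. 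If some affine hyperplane $H$ is contained in $G$, let $H'$ be the parallel hyperplane; then $AG(r-2,2)|(G\cap H')\cong(AG(r-1,2)|G)|(G\cap H')$ is $U_{4,4}$-free, hence an affine target by induction, and appending $E(AG(r-1,2))$ to its realizing flag for $H'$ (inserting an extra copy of $H'$ first if parity demands) realizes $G$, the last green layer being the all-green set $H=E(AG(r-1,2))\setminus H'$. Since $G$ spans, no affine hyperplane lies in $R$, so it remains to handle the case where every affine hyperplane of $AG(r-1,2)$ meets both $G$ and $R$.

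\emph{The obstacle.} I expect this residual case to be the crux, the key claim being that it simply cannot occur: if every affine hyperplane of $AG(r-1,2)$ meets both $G$ and $R$, then $AG(r-1,2)|G$ contains $U_{4,4}$ as an induced restriction. For $r=4$ this is short — a colour class of size $\le 3$ lies in an affine plane whose parallel plane is then monochromatic, so $|G|=|R|=4$, and $G$ is not an affine plane (else $G$ is itself a monochromatic hyperplane), hence $G$ is affinely independent, a $U_{4,4}$. For $r\ge 5$ I would try to construct four affinely independent green points $g_1,\dots,g_4$ whose four completing points $g_i+g_j+g_k$ in their rank-$4$ affine span are all red, picking the $g_i$ one at a time and using the bichromaticity of hyperplanes to keep the accumulating colour constraints satisfiable; the hard part is ruling out that every choice of the last point fails. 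An alternative route passes to $PG(r-1,2)$: writing $AG(r-1,2)=PG(r-1,2)\setminus H_\infty$, one shows that $AG(r-1,2)|G$ is an affine target if and only if $PG(r-1,2)|(G\cup S)$ is a target for some $S\subseteq H_\infty$, and then — invoking Theorem~\ref{binarytarget} — one must choose $S$ so that no claw or anti-claw arises; the obstacle here is that the naive choice $S=H_\infty$ can create an anti-claw from a plane having exactly one green affine point, so $S$ must be chosen more carefully and globally. In either route the essential difficulty is the same: converting ``no $U_{4,4}$'' into the differently-shaped obstruction-freeness needed to build the nested sequence of flats.
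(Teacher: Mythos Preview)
Your necessity argument is correct and is essentially the paper's Lemma~\ref{BinaryAffineRank} unpacked for the special case $r(F)=4$. Your inductive setup for sufficiency is also sound, and the $r=4$ verification of the residual claim is fine.

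The gap is that the residual case --- every affine hyperplane bichromatic --- is not an obstacle you have left for later polishing; it is the entire content of the theorem. Your reduction buys almost nothing: once you know no hyperplane is monochromatic, your induction hypothesis tells you every proper induced restriction of $G$ is an affine target, and since an affine target always has a monochromatic hyperplane (take the complement of any hyperplane containing $F_{k-1}$ in the canonical sequence), $G$ itself is not a target. So your residual $G$ is precisely a minimal affine-non-target with no $U_{4,4}$, which is exactly where the paper's real work begins. You have restated the problem, not reduced it.

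The paper's route from here is concrete and quite different from either of your sketches. It first shows (Lemma~\ref{BinaryDisjointHP}) that such a $G$ admits a complementary pair $(X_1,X_2)$ of hyperplanes with $r(R\cap X_1)=r(G\cap X_2)=r-1$ --- not monochromatic, just ``red'' and ``green'' in the full-rank sense --- and then constructs a second transverse pair $(Y_1,Y_2)$. The four rank-$(r-2)$ cells $F_{i,j}=X_i\cap Y_j$ are then analysed case by case (is $F_{1,2}\cup F_{2,1}$ green, red, or half-and-half?), each time building two planes $P_1,P_2$ that share an edge so that, via Lemma~\ref{affineq2intersect2}, $\cl(P_1\cup P_2)$ is a rank-$4$ flat with exactly four green points forming a $U_{4,4}$. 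Your ``pick $g_1,\dots,g_4$ one at a time'' sketch would have to rediscover this cell structure to control the three completing points simultaneously; there is no evident greedy argument. Your projective route via Theorem~\ref{binarytarget} runs into the same wall: choosing $S\subseteq H_\infty$ to kill all claws and anti-claws at once is exactly the compatibility problem of Proposition~\ref{standardtarget} and the proposition following it, and that proposition does not by itself produce such an $S$ --- it only characterises when one works.
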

 The matroids $\mathcal{W}^3$ and $P(U_{2,3}, U_{2,3})$ that appear in the next theorem are the rank-$3$ whirl and the parallel connection of two copies of $U_{2,3}$.
\begin{theorem}\label{affinetargetq3}
    Let $(G,R)$ be a $2$-coloring of $AG(r-1,3)$. Then $AG(r-1,3)|G$ is an affine target if and only if it does not contain any of $U_{3,3}, U_{3,4},\linebreak U_{2,3}\oplus U_{1,1}, U_{2,3} \oplus_2 U_{2,4}, P(U_{2,3},U_{2,3}),$ or $\mathcal{W}^3$ as an induced restriction.
\end{theorem}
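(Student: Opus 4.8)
The plan is to prove the two implications separately, and throughout to use that affine targets are closed under induced restriction: if $AG(r-1,3)|X$ is an affine target with defining nested sequence $(F_0,\dots,F_k)$ and $F$ is a flat of $AG(r-1,3)$, then $(F_0\cap F,\dots,F_k\cap F)$, after deleting repeated terms and correcting parity via the green/red interchange noted in the introduction, is a nested sequence of affine flats in $AG(r-1,3)|F$ witnessing that $AG(r-1,3)|(F\cap X)$ is an affine target. For necessity, since each of the six matroids has rank $3$, it occurs as an induced restriction only inside a rank-$3$ flat, which is a copy of $AG(2,3)$; so it suffices to show none of them is an affine target in $AG(2,3)$. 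The key point is that if $AG(2,3)|G$ is an affine target witnessed by $\emptyset=F_0\subseteq\dots\subseteq F_k=AG(2,3)$, then $F_{k-1}$ is a proper flat, hence of rank at most $2$, and $F_k-F_{k-1}$ is monochromatic, so the colour class not meeting the last gap lies in $F_{k-1}$ and has rank at most $2$: in any affine target in $AG(2,3)$ some colour class is contained in a line.

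It then remains to check, by a short incidence count, that each of $U_{3,3}$, $U_{3,4}$, $U_{2,3}\oplus U_{1,1}$, $U_{2,3}\oplus_2 U_{2,4}$, $P(U_{2,3},U_{2,3})$, $\mathcal W^3$ embeds in $AG(2,3)$ with \emph{both} colour classes of rank $3$. For the first five the minority colour class already has at least four points, which cannot lie on a $3$-point line. For $\mathcal W^3$ the minority class has three points; a collinear such triple would leave the majority class containing only two of the $12$ lines of $AG(2,3)$, whereas $\mathcal W^3$ has three three-point lines, so the triple is non-collinear and of rank $3$ (this computation also exhibits $\mathcal W^3$ as the complement of a non-collinear triple, so it does embed). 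With the previous paragraph and closure under induced restriction, any affine target avoids all six.

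For sufficiency I would induct on $r$. When $r-1\le 1$ every colouring of $AG(r-1,3)$ is an affine target, and the case $AG(2,3)$ is a finite check: enumerate the colourings up to $AGL(2,3)$ and the colour swap, decide which are affine targets, and verify the non-targets are exactly those isomorphic to one of the six; the rank bound $|\text{line}|=3$ forces, for instance, that a colour class of size $4$ or $5$ always yields one of $U_{3,4}$, $U_{2,3}\oplus U_{1,1}$, $P(U_{2,3},U_{2,3})$, $U_{2,3}\oplus_2 U_{2,4}$ and is never a target. For the inductive step, suppose $r-1\ge 3$, that $M:=AG(r-1,3)|G$ avoids the six, and $\emptyset\neq G\neq E$. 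The crux is that then one colour class has rank less than $r$; granting it, say $r(R)<r$, and put $F=\operatorname{cl}(R)$, a proper flat with $E-F\subseteq G$. Then $M|(F\cap G)$ is an induced restriction of $M$, so it avoids the six, and it has rank $r(F)<r$, so by induction it is an affine target in $AG(r-1,3)|F$, witnessed by a nested sequence of affine flats ending at $F$; appending $F$ once more (a harmless repetition that fixes the parity of the last gap) and then $E$ gives a nested sequence of affine flats in $AG(r-1,3)$ realizing $(G,R)$, so $M$ is an affine target, and since $r(F)<r$ the induction terminates.

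The main obstacle is proving, for $r-1\ge 3$, that avoiding the six forces a colour class of rank less than $r$. I would first note that the six configurations, as subsets of $AG(2,3)$, form a family closed under complementation (the complement of a non-collinear triple realizes $\mathcal W^3$, of a four-point cap realizes $P(U_{2,3},U_{2,3})$, of a line-plus-a-point realizes $U_{2,3}\oplus_2 U_{2,4}$, and dually); hence ``avoids the six'' is symmetric in $G$ and $R$, in $AG(2,3)$ and plane by plane in $AG(r-1,3)$. Combined with the base case and the rank bound above, this shows that in $AG(r-1,3)$ \emph{every plane has a colour class of rank at most $2$}. Next I would apply the inductive hypothesis to each hyperplane $H$: $H$ carries an induced restriction of $M$ that avoids the six and has rank $r-1<r$, so $H$ is an affine target, hence monochromatic or with a colour class of rank less than $r-1$. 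A case analysis over the three parallel hyperplanes in each direction, using the plane-wise rank bound, then propagates this to a global statement: for instance, if some hyperplane is monochromatic, a plane meeting it in a monochromatic line together with the plane-wise bound forces the corresponding line in a parallel hyperplane to be monochromatic, and, running over all directions, forces a whole parallel hyperplane monochromatic, placing a colour class in a hyperplane. The remaining case, in which no hyperplane is monochromatic, is the most delicate; there I expect to contract a point to $PG(r-2,3)$ and use Theorem~\ref{qarytarget} with $q=3$, together with the fact from Section~\ref{projmainthm} that contractions of targets are targets, either to exclude the case or to locate a forbidden restriction directly. Turning the plane-by-plane and hyperplane-by-hyperplane information into a genuinely global flat is the step that needs the most care.
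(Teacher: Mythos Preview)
Your necessity argument is essentially that of the paper (both reduce to showing that in $AG(2,3)$ an affine target has one colour class on a line, i.e.\ Lemma~\ref{affinefullrank}); one small slip is that $U_{3,3}$ has only three points, so its minority class is \emph{not} ``at least four points'' --- but of course those three points are non-collinear by definition, so the conclusion survives.

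The genuine gap is in the sufficiency direction. Your inductive reduction correctly isolates the crux: if $r\ge 4$ and $G$ avoids the six, show that some colour class has rank less than $r$. But the argument you sketch for this crux is not a proof. The ``monochromatic hyperplane'' case does not finish as you suggest: a monochromatic green hyperplane $H$ only gives $R\subseteq E-H$, and $E-H$ is the union of \emph{two} parallel hyperplanes, so $r(R)$ can still equal $r$. Your propagation claim (``forces the corresponding line in a parallel hyperplane to be monochromatic'') is not justified and is not in general true. The ``no monochromatic hyperplane'' case is left as a hope: you do not say what statement about the contraction you would prove, and Theorem~\ref{qarytarget} controls induced restrictions of $PG(r-2,3)$, which bear no direct relation to planes of the original $AG(r-1,3)$; moreover the colour of a contracted parallel class is not determined by the six forbidden configurations alone.

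The paper closes this gap by a short, concrete incidence argument that you are missing. Working with a minimal affine-non-target of rank $r\ge 4$, it first proves (Lemma~\ref{qaryDisjointHP}) that there exist a green hyperplane and a red hyperplane that are \emph{disjoint}; this already uses Lemma~\ref{affinefullrank} and is the key structural fact you never obtain. Extending each to a parallel class gives two transversal partitions $\{X_1,X_2,X_3\}$ and $\{Y_1,Y_2,Y_3\}$ of $E$ into hyperplanes, with $X_1$ green and $X_2$ red, and one then shows directly that no colouring of $Y_1,Y_2,Y_3$ is consistent: using the plane lemma (Lemma~\ref{planeintersect}) one picks three points $e,f,g$ in suitable cells $F_{i,j}=X_i\cap Y_j$ so that $\cl(\{e,f,g\})$ meets each $F_{i,j}$ exactly once and is forced to contain at least three green and three red points, giving a plane with both colour classes of rank $3$ --- one of the six. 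This replaces your unspecified case analysis and contraction step with two short sublemmas.
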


\begin{theorem}\label{affinetargetsq4}
    Let $(G,R)$ be a $2$-coloring of $AG(r-1,q)$, for $q \geq 4$. Then $AG(r-1,q)|G$ is an affine target if and only if it does not contain any of $U_{2,2},U_{2,3},\dots, U_{2,q-3}, \text{or } U_{2,q-2}$ as an induced restriction.
\end{theorem}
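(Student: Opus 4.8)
The plan is to prove the forward direction — a target has none of these induced restrictions — by a direct argument on lines, and the reverse direction by induction on $r$ together with a lemma asserting that one colour class fails to span.

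For the forward direction, suppose $AG(r-1,q)|G$ arises from a nested sequence $(F_0,\dots,F_k)$ of affine flats, and let $L$ be a line of $AG(r-1,q)$. Then $(L\cap F_0,\dots,L\cap F_k)$ is a nested sequence of flats of $AG(r-1,q)|L\cong AG(1,q)$; since the proper nonempty flats of a line are its single points, this chain, after deleting repetitions, is either $(\emptyset,L)$ or $(\emptyset,\{p\},L)$. In the first case $L$ lies inside a single difference $F_{i+1}-F_i$, so $L$ is monochromatic; in the second, $p$ lies in one difference and $L-\{p\}$ in another, so $L\cap G$ is monochromatic or consists of one point of one colour and the $q-1$ points of the other. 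Hence $|L\cap G|\in\{0,1,q-1,q\}$ for every line $L$. Since the rank-$2$ flats of $AG(r-1,q)|G$ are exactly the sets $L\cap G$ with $|L\cap G|\ge 2$, and such a flat is isomorphic to $U_{2,|L\cap G|}$, it follows that $AG(r-1,q)|G$ has no induced restriction isomorphic to $U_{2,j}$ for $2\le j\le q-2$; this is where $q\ge 4$ enters, since then $\{0,1,q-1,q\}$ misses $\{2,\dots,q-2\}$.

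For the reverse direction, restate the hypothesis as: every line of $AG(r-1,q)$ meets $G$ in $0$, $1$, $q-1$, or $q$ points, so (using $q\ge 4$) any line meeting $G$ in at least two points meets it in at least $q-1$, and symmetrically for $R$. Induct on $r$. For $r\le 1$ the geometry is a single point (or empty), so monochromatic and trivially a target. For $r\ge 2$, if $E(AG(r-1,q))$ is monochromatic we are done, so assume $G$ and $R$ are both nonempty. The crux is the claim that at least one of $G,R$ fails to span $AG(r-1,q)$. Granting it, say $R$ does not span; then $F=\cl(R)$ is a proper flat, so $E(AG(r-1,q))-F\subseteq G$, and $AG(r-1,q)|F$ is an affine geometry of rank less than $r$ inheriting the line condition, hence by induction an affine target realised by some nested sequence $(F_0,\dots,F_m=F)$. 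Appending $E(AG(r-1,q))$ to this sequence — repeating $F$ once beforehand if the parity of $m$ is wrong — makes $E(AG(r-1,q))-F$ an even-indexed difference, exhibiting $AG(r-1,q)|G$ as an affine target (here one uses that a flat of the flat $F$ is a flat of $AG(r-1,q)$); the case where $G$ does not span is symmetric.

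The main obstacle is the spanning claim, which carries essentially all the work. I would argue by contradiction, assuming $G$ and $R$ both span. In rank $2$ this is immediate, since a $q$-point line cannot, under the line condition, have two points of each colour. For larger rank, each colour contains a non-collinear triple; working inside the plane that a monochromatic triple spans, the three lines joining the triple are rich in that colour (they contain at least $q-1$ points of it), and a counting/incidence argument over the parallel classes of that plane and the lines transverse to the triple should force the plane — and hence all of $AG(r-1,q)$ — to be nearly monochromatic, contradicting that the other colour spans. I expect this rigidity analysis to be the delicate point, and it is exactly where $q\ge 4$ is needed: for $q=3$ the line condition is vacuous (a $3$-point line automatically has green count in $\{0,1,q-1,q\}$), which is why Theorem~\ref{affinetargetq3} must forbid a longer list of restrictions. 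An alternative route to the reverse direction would be to show first that affine targets are precisely the restrictions to the affine part of projective targets — close up each affine flat to its projective closure, and conversely note that intersecting a nested sequence of projective flats with the affine part gives a nested sequence of affine flats — and then to extend a line-condition colouring of $AG(r-1,q)$ to a line-condition colouring of $PG(r-1,q)$ and apply Theorem~\ref{qarytarget}; but verifying that the colour of each point at infinity can be chosen consistently reduces to a statement of the same flavour as the spanning claim. Finally, since affine targets are closed under contraction, the characterisation may equally be read as one by forbidden induced minors.
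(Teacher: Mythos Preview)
Your forward direction is correct and essentially the paper's argument, phrased more explicitly: the line condition $|L\cap G|\in\{0,1,q-1,q\}$ is exactly Lemma~\ref{affinefullrank} specialised to lines.

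Your reverse direction has the right skeleton --- the paper also reduces everything to showing that a minimal counterexample cannot have both colours spanning --- but you have not proved the spanning claim, and your sketch does not match what actually works. You propose to work inside the affine plane spanned by a monochromatic triple and push a counting argument outwards. With the inductive hypothesis in hand, that plane is indeed a target, so the other colour inside it lies on a single line; but there is no evident mechanism by which this local rigidity propagates to force one colour off a hyperplane of the whole geometry. The paper's route is global from the outset: it first shows (Lemma~\ref{qaryDisjointHP}, using the inductive hypothesis via Lemma~\ref{affinefullrank}) that there exist a \emph{disjoint} red hyperplane $X_1$ and green hyperplane $X_2$, completes these to a parallel class $\textbf{X}$, takes any second parallel class $\textbf{Y}$, and then uses Lemma~\ref{lineintersect} to build a single transversal line witnessing a forbidden $|L\cap G|$ in each of three exhaustive cases (all $Y_i$ one colour; exactly one $Y_i$ of a given colour; at least two of each). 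Nothing in your plane sketch produces the disjoint red/green hyperplane pair that drives this argument, and without it I do not see how to close the gap.

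One further correction: your closing remark that affine targets are closed under contraction is false. Contracting a point of $AG(r-1,q)$ yields $PG(r-2,q)$, so the class of affine matroids itself is not contraction-closed; this is why the paper states the affine results only as induced-restriction characterisations, in contrast to the projective case (Proposition~\ref{targetcontraction}).
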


%%%%%%%%%%%%%%%%%%%%%%%%%%%%%%%%%%%%%%%%%%%%%%%%%%%%%%%%%%%%%%%%%%%%%%%%%%%%%%%5
\section{Preliminary Results}
Throughout the paper, we will refer to flats and hyperplanes of $PG(r-1,q)$ as \textit{projective flats} and \textit{projective hyperplanes}, respectively. Let $M$ be a restriction of $PG(r-1,q)$. For a subset $X$ of $E(M)$, its \textit{projective closure}, $\cl_P(X)$, is the closure of $X$ in the matroid $PG(r-1,q)$. We first show that if $PG(r-1,q)|G$ is a target, then the matroid $PG(r-1,q)|G$ is uniquely determined by the sequence $(r_0,r_1,\dots, r_k)$ of ranks of the nested sequence $(F_0,F_1,\dots, F_k)$ of projective flats. Note that we shall often write $G$ and $R$ for the matroids $PG(r-1,q)|G$ and $PG(r-1,q)|R$, respectively. This means that we will be using $G$ and $R$ to denote both matroids and the ground sets of those matroids.

\begin{proposition}\label{projwelldefined}
Let $(E_0,E_1,\dots, E_k)$ and $(F_0,F_1, \dots, F_k)$ be nested sequences of flats in $PG(r-1,q)$ such that $r(E_i) = r(F_i)$ for all $i$ in $\{0,1,\dots k\}$. Let $G_E$ and $G_F$ be the union, respectively, of all $E_{i+1}-E_i$ and of all $F_{i+1}-F_i$ for the even numbers $i$ in $\{0,1,\dots,k\}$. Then $PG(r-1,q)|G_E \cong PG(r-1,q)|G_F$.
\end{proposition}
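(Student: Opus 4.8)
The plan is to exploit the homogeneity of $PG(r-1,q)$: I would produce an automorphism of the matroid $PG(r-1,q)$ carrying each $E_i$ onto $F_i$, and then restrict it to $G_E$. Identify $PG(r-1,q)$ with the matroid whose elements are the one-dimensional subspaces of $V=GF(q)^r$, a set being independent when the corresponding lines are linearly independent. Under this identification, a flat of rank $d$ is precisely the set of one-dimensional subspaces lying inside some fixed $d$-dimensional subspace of $V$, and the projective closure $\cl_P$ is intersection with the span. Thus the hypotheses translate to saying that $(E_0,\dots,E_k)$ and $(F_0,\dots,F_k)$ correspond to chains $W_0\subseteq W_1\subseteq\cdots\subseteq W_k$ and $W_0'\subseteq W_1'\subseteq\cdots\subseteq W_k'$ of subspaces of $V$ with $\dim W_i = r(E_i) = r(F_i) = \dim W_i'$ for all $i$.

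The first key step is the standard fact that $GL(r,q)$ acts transitively on chains of subspaces of $V$ with a prescribed dimension sequence. To see this, refine each chain to a complete flag (inserting subspaces so that consecutive dimensions differ by at most one), choose bases $(v_1,\dots,v_r)$ and $(v_1',\dots,v_r')$ of $V$ adapted to these flags so that the first $\dim W_i$ vectors span $W_i$, respectively $W_i'$, and let $\phi\in GL(r,q)$ send $v_j$ to $v_j'$ for each $j$. Then $\phi(W_i)=W_i'$ for every $i$. Repetitions in the dimension sequence, i.e.\ indices with $E_{i+1}=E_i$, cause no difficulty here: one simply allows $W_i=W_{i+1}$.

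Since $\phi$ is linear and invertible, the induced map $\bar\phi$ on one-dimensional subspaces is a permutation of $E(PG(r-1,q))$ preserving linear independence, hence an automorphism of the matroid $PG(r-1,q)$; moreover $\bar\phi$ maps the flat $E_i$ onto the flat $F_i$ for every $i$. Consequently $\bar\phi$ maps $E_{i+1}-E_i$ onto $F_{i+1}-F_i$ for each $i$, so $\bar\phi(G_E)=G_F$. As the restriction of any automorphism of $PG(r-1,q)$ to a subset is an isomorphism between the corresponding induced restrictions, it follows that $PG(r-1,q)|G_E\cong PG(r-1,q)|G_F$. An alternative route, if one prefers to avoid adapted bases, is induction on $k$ using transitivity of the stabilizer of $F_{i}$ on the rank-$r(F_{i+1})$ flats containing $F_{i}$, but the flag argument above is cleaner.

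Honestly, the main obstacle is minor: once flag-transitivity of $GL(r,q)$ is invoked the result is essentially immediate, so the only things needing care are the bookkeeping when the dimension sequence has repetitions and the verification that a linear automorphism of $V$ genuinely induces a matroid automorphism of $PG(r-1,q)$ that sends each flat to a flat of the same rank.
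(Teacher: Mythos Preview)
Your proposal is correct and is essentially the same argument as the paper's: both build an automorphism of $PG(r-1,q)$ sending each $E_i$ to $F_i$ by choosing bases adapted to the two chains and mapping one basis to the other, then conclude by restricting this automorphism to $G_E$. The only cosmetic difference is that you phrase the construction in the ambient vector space $GF(q)^r$ while the paper works directly with bases of the flats in the matroid.
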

\begin{proof}
    Let $h$ be the smallest $i$ such that $r(E_i) > 0$. Let $\{b_{h,1},b_{h,2},\dots,b_{h,m_h}\}$ and $\{d_{h,1},d_{h,2},\dots,d_{h,m_h}\}$ be bases $B_h$ and $D_h$ of $PG(r-1,q)|E_h$ and $PG(r-1,q)|F_h$, respectively. Let $B_0 = B_1 = \dots = B_{h-1} = \emptyset$ and $D_0 = D_1 = \dots = D_{h-1} = \emptyset$. For $j \geq h$, assume that $B_0, B_1,\dots, B_j$ and $D_0,D_1,\dots,D_j$ have been defined. Let $B_{j+1}$ and $D_{j+1}$ be bases of $E_{j+1}$ and $F_{j+1}$, respectively, such that $B_{j} \subseteq B_{j+1}$ and $D_j~\subseteq~D_{j+1}$. Let $B_{j+1} - B_j = \{b_{j+1,1},b_{j+1,2},\dots,b_{j+1,m_{j+1}}\}$ and $D_{j+1} - D_j = \{d_{j+1,1}, d_{j+1,2},\dots,d_{j+1,m_{j+1}}\}$. Define the automorphism $\phi$ on $PG(r-1,q)$ by $\phi(b_{s,t})=d_{s,t}$ for all $s$ and $t$ such that $s \geq h$. Then $\phi(E_i) = F_i$ for all $i$, so $\phi(E_{i+1} - E_i) = \phi(E_{i+1}) - \phi(E_i) = F_{i+1} - F_i$, for all $i$. Therefore, $PG(r-1,q)|G_E \cong PG(r-1,q)|G_F$.
\end{proof}
The last result means that we can refer to a simple $GF(q)$-representable matroid $M$ as being a target exactly when some, and hence all, of the $GF(q)$-representations of $M$ are targets. Note that in a nested sequence $(F_0,F_1,\dots,F_k)$ of flats defining a target, it is convenient to allow equality of the flats. A nested sequence $(F_0,F_1,\dots,F_k)$ of flats is the \textit{canonical nested sequence} defining a projective or affine target if $F_0 = \emptyset$, and $F_1,F_2,\dots,F_{k-1}$, and $F_k$ are distinct. Observe that allowing $F_1$ to be empty accommodates the requirement that the target is the union of all sets $F_{i+1} - F_i$ for $i$ even.

Lemma~$2.15$ of Nelson and Nomoto [\ref{NELSON}] proved that binary targets are closed under induced restriction. Using the same proof, their result can be extended to targets represented over $GF(q)$.
\begin{lemma} \label{targetinudcedres}
    The class of targets over $GF(q)$ is closed under induced restrictions.
\end{lemma}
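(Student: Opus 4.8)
The plan is to take a target $PG(r-1,q)|G$ with canonical nested sequence $(F_0,F_1,\dots,F_k)$ of projective flats and a flat $H$ of the matroid $PG(r-1,q)|G$, and to exhibit a nested sequence of projective flats witnessing that $PG(r-1,q)|H = (PG(r-1,q)|G)|H$ is itself a target. The natural candidate is to intersect the given sequence with the projective closure of $H$: set $N = \cl_P(H)$, which is a projective flat, and consider the sequence of flats $(F_0 \cap N, F_1 \cap N, \dots, F_k \cap N)$ inside the projective geometry $PG(r-1,q)|N$ (which is itself a projective geometry of rank $r(H)$). Each $F_i \cap N$ is a flat of $PG(r-1,q)|N$, and these form a nested sequence, so they define a target inside $PG(r-1,q)|N$; the content of the lemma is that the green set this sequence produces is exactly $H$.

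The key steps, in order, are as follows. First I would record that $\cl_P(H) = N$ is a projective flat and that $PG(r-1,q)|N \cong PG(r(H)-1,q)$, so that $(F_0\cap N,\dots,F_k\cap N)$ genuinely is a nested sequence of projective flats in this smaller geometry, with $F_0 \cap N = \emptyset$ and $F_k \cap N = N = E(PG(r-1,q)|N)$. Second, I would compute the green set of this sequence: it is $\bigcup_{i \text{ even}} \big((F_{i+1}\cap N) - (F_i \cap N)\big) = N \cap \bigcup_{i \text{ even}} (F_{i+1} - F_i) = N \cap G$. Third — and this is the crux — I need $N \cap G = H$. The inclusion $H \subseteq N \cap G$ is immediate since $H \subseteq G$ and $H \subseteq \cl_P(H) = N$. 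For the reverse inclusion, suppose $x \in N \cap G$ but $x \notin H$; since $H$ is a flat of the matroid $PG(r-1,q)|G$ and $x \in G \setminus H$, the rank of $H \cup x$ in $PG(r-1,q)|G$ exceeds $r(H)$, so $x \notin \cl(H)$ in $PG(r-1,q)|G$. But $x \in G$ and $x \in \cl_P(H)$, and one checks that for a subset $S$ of $G$ the closure of $S$ in the restriction $PG(r-1,q)|G$ equals $\cl_P(S) \cap G$; hence $x \in \cl_P(H) \cap G = \cl(H)$ in $PG(r-1,q)|G$, a contradiction. This shows $N \cap G = H$, so the constructed sequence witnesses that $PG(r-1,q)|H$ is a target.

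The main obstacle is the verification at the heart of step three, namely that flats of the restriction $PG(r-1,q)|G$ are exactly the sets of the form $\cl_P(S)\cap G$, or equivalently that the rank function of the restriction agrees with that of $PG(r-1,q)$ on subsets of $G$; this is really just the standard fact that restriction to a subset does not change the rank function on that subset, combined with the definition of closure, so it is routine but needs to be stated cleanly. A secondary point worth noting is that the resulting nested sequence in $PG(r-1,q)|N$ need not be canonical (consecutive flats $F_i \cap N$ may coincide, or the parity bookkeeping may shift), but this is harmless: the definition of target allows equality of flats in the defining sequence, and one can always pass to the canonical sequence afterward if desired. Finally, the same argument applies verbatim with ``projective'' replaced by ``affine'' throughout, since an affine geometry restricted to the complement of a hyperplane through a flat is again an affine geometry and affine flats intersect to affine flats; as the statement here concerns only targets over $GF(q)$, I would simply remark that Nelson and Nomoto's argument carries over and give the projective version in full.
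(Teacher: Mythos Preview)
Your proof is correct and follows essentially the same approach as the paper (which defers to Nelson and Nomoto's argument, and whose analogous Proposition~4.2 for affine targets makes the method explicit): intersect the given nested sequence of flats with a flat of the ambient geometry and observe that the resulting nested sequence witnesses the induced restriction as a target. Your step three, showing $N\cap G = H$, is just the standard fact that a flat $H$ of a restriction $M|G$ satisfies $H = \cl_M(H)\cap G$, so while it is worth stating, it is not really an obstacle; the paper's affine version simply starts from a flat $X$ of the ambient geometry (equivalent to your $N$) and skips this unpacking.
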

 
 \begin{lemma} \label{fullrank} 
Let $(G,R)$ be a $2$-coloring of $PG(r-1,q)$. Assume that $G$ is a target and $F$ is a projective flat of $PG(r-1,q)$. Then exactly one of $G \cap F$ and $R \cap F$ has rank $r(F)$.
\end{lemma}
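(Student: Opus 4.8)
The plan is to use the canonical nested sequence $(F_0, F_1, \dots, F_k)$ defining the target $G$, and to argue by induction on the length $k$ of this sequence, or equivalently by induction on the rank $r$. The base case $r = 0$ or $k \le 1$ is immediate, since then one of $G, R$ is empty and the other is all of $E(PG(r-1,q))$, so exactly one of them meets $F$ in full rank for every projective flat $F$. For the inductive step, I would contract (and simplify) by the last-added block, or more naturally restrict attention to the flat $F_{k-1}$: the sequence $(F_0, F_1, \dots, F_{k-1})$ is a canonical nested sequence in the projective geometry $PG(r-1,q)|F_{k-1}$, and it defines either $G \cap F_{k-1}$ or $R \cap F_{k-1}$ as a target there, depending on the parity of $k-1$. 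By Proposition~\ref{projwelldefined} this smaller coloured geometry is itself a target, so the inductive hypothesis applies to it.

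Now let $F$ be an arbitrary projective flat of $PG(r-1,q)$. The key dichotomy is whether or not $F \subseteq F_{k-1}$. If $F \subseteq F_{k-1}$, then $F$ is a projective flat of $PG(r-1,q)|F_{k-1}$ and the conclusion follows directly from the inductive hypothesis applied to the smaller target, since the colours of elements of $F$ are unchanged. If $F \not\subseteq F_{k-1}$, then $F \cap (E(M) - F_{k-1})$ is nonempty; every element of $E(M) - F_{k-1} = F_k - F_{k-1}$ has the colour of the last block (say green, if $k-1$ is even, so that $F_k - F_{k-1} \subseteq G$; the other parity is symmetric by the remark that $G$ is a target iff $R$ is). I would then show that $G \cap F$ has rank $r(F)$: because $F \not\subseteq F_{k-1}$ and $F_{k-1}$ is a flat, we have $r(F \cap F_{k-1}) = r(F) - \operatorname{(number\ of\ directions\ outside)} $, and more precisely $r(F) \le r(F \cap F_{k-1}) + r(F \cap (F_k - F_{k-1}))^{+}$; pick a basis of $F \cap F_{k-1}$ and extend it within $F$ using elements of $F - F_{k-1}$, which are all green, to a basis of $F$. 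It then remains to rule out that $R \cap F$ also has rank $r(F)$: if it did, $R$ would contain a basis of $F$, but $R \subseteq F_{k-1}$, forcing $F \subseteq \operatorname{cl}_P(R \cap F) \subseteq F_{k-1}$, a contradiction. Hence exactly one of $G \cap F$, $R \cap F$ has full rank.

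The main obstacle I anticipate is the bookkeeping in the case $F \not\subseteq F_{k-1}$: one has to be careful that a basis of $F \cap F_{k-1}$ really does extend to a basis of $F$ using only elements of $F - F_{k-1}$ (which is where the flatness of $F_{k-1}$ and the fact that $PG(r-1,q)$ is a geometry with every point present is used), and that the "other" colour class, living entirely inside $F_{k-1}$, genuinely cannot span $F$. A cleaner alternative that avoids reproving these rank facts from scratch is to induct instead via contraction: Section~2 establishes (or will establish) that targets are closed under contraction, and contracting an element of the top block $F_k - F_{k-1}$ and simplifying yields a target of smaller rank whose colour classes are the images of $G$ and $R$; combined with the observation that $\operatorname{cl}_P$ interacts well with contraction, one reduces to flats contained in a proper flat and applies induction. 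Either route reduces the lemma to the trivial observation that a flat $F$ is spanned by whichever colour class contains the "new" points of the outermost block meeting $F$, and is not spanned by the other, which is confined to a proper flat.
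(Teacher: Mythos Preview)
There is a gap in your case $F\not\subseteq F_{k-1}$. You correctly show $r(R\cap F)<r(F)$, since $R\subseteq F_{k-1}$ and $F\cap F_{k-1}$ is a proper flat of $F$. But your argument for $r(G\cap F)=r(F)$ does not work as written: you extend a basis $B_1$ of $F\cap F_{k-1}$ by elements of $F-F_{k-1}$ (all green) to a basis of $F$, yet $B_1$ itself may contain red elements, so the resulting basis of $F$ need not lie in $G$. The obstacle you anticipate---whether the extension can be confined to $F-F_{k-1}$---is not the issue; that is automatic in any matroid. What you actually need is that $F-F_{k-1}$ \emph{alone} spans $F$. This holds because $F\cap F_{k-1}$ is a proper projective flat of the projective flat $F$, and the complement of a proper flat in a projective geometry has full rank (for instance, delete any hyperplane of $F$ containing $F\cap F_{k-1}$; what remains is a copy of $AG(r(F)-1,q)$ sitting inside $F-F_{k-1}$). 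Since $F-F_{k-1}\subseteq G$, this gives $r(G\cap F)=r(F)$ directly, with no reference to $F\cap F_{k-1}$ at all. With this correction your induction goes through.

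For comparison, the paper's proof is shorter and takes a different route: it restricts to $F$ via Lemma~\ref{targetinudcedres}, cites [\ref{SINGH}, Lemma~2.1] for the fact that in any $2$-colouring of a projective geometry at least one colour class has full rank, and then observes from the induced nested sequence $(F_0',\dots,F_{k-1}',F)$ that one colour class is contained in the proper flat $F_{k-1}'$. Your approach has the advantage of being self-contained (no external citation); the paper's is quicker because it reuses closure under induced restriction rather than re-running an induction. Your contraction alternative would rely on Proposition~\ref{targetcontraction}, which is stated after this lemma; its proof does not in fact invoke Lemma~\ref{fullrank}, so the reordering is harmless, but you should verify that explicitly if you take that route.
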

\begin{proof}
By Lemma~\ref{targetinudcedres}, $PG(r-1,q)|(G\cap F)$ is a target corresponding to a nested sequence $(F_0',F_1',\dots, F'_{k-1}, F)$ of projective flats. By, for example, [\ref{SINGH}, Lemma~2.1], $r(G\cap F)$ or $r(R\cap F)$ is $r(F)$. Either $G\cap F$ or $R\cap F$ is contained in some proper projective flat of $F$. Therefore, either  $r(G\cap F) < r(F)$ or $r(R \cap F) < r(F)$.
\end{proof}
 We refer to the rank of the set of green elements in a projective flat $F$ as the \textit{green rank} of $F$. If $F$ has green rank $r(F)$, we say that $F$ is a \textit{green flat}. Furthermore, if a projective hyperplane has green rank $r-1$, then it is a \textit{green hyperplane}. Red rank, red flats, and red hyperplanes are defined analogously. From the last lemma, it follows that a projective flat can either be a green flat or a red flat, but not both.
 
 We now show that every contraction of a target is a target. Consider contracting a green element $e$ in $ M$. If a parallel class in the contraction contains at least one green point, then, after the simplification, the resulting point will be green. If there are only red points in the parallel class, then, after the simplification, the resulting point is red.

%%%%%%%%%%%%%%%%%%%%%%%%%%%%%%%%%%%%%%%%

\begin{proposition} \label{targetcontraction}
The class of targets over $GF(q)$ is closed under contractions. 
\end{proposition}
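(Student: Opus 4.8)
The plan is to show that contracting a single element $e$ from a target $PG(r-1,q)|G$ yields another target, and then iterate. Fix a canonical nested sequence $(F_0, F_1, \dots, F_k)$ of projective flats defining $G$, so that $G = \bigcup_{i \text{ even}} (F_{i+1} - F_i)$. Contracting $e$ in the ambient geometry $PG(r-1,q)$ and simplifying produces a copy of $PG(r-2,q)$ whose points correspond to the projective lines through $e$; concretely, we may model $PG(r-2,q) = PG(r-1,q)/e$ via a fixed projective hyperplane $H_e$ not through $e$, where each point $x$ of $H_e$ represents the line $\cl_P(\{e,x\})$. The natural candidate nested sequence in $PG(r-2,q)$ is $(F_0', F_1', \dots, F_k')$ where $F_i' = \cl_P(F_i \cup \{e\})/e$, i.e.\ the image of $F_i$ under the contraction map (equivalently $F_i' = \cl_P(F_i \cup \{e\}) \cap H_e$ after identifying $PG(r-1,q)/e$ with $PG(r-1,q)|H_e$). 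These are flats of $PG(r-2,q)$, they are nested, $F_0' = \emptyset$, and $F_k' = E(PG(r-2,q))$, so $(F_0', \dots, F_k')$ is a legitimate nested sequence of projective flats.

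The heart of the argument is to check that the green set of the contracted coloring is exactly $G' := \bigcup_{i \text{ even}} (F_{i+1}' - F_i')$. Here I would use the coloring rule spelled out in the paragraph preceding the proposition: a point $x$ of $PG(r-2,q) = PG(r-1,q)/e$ is green in the simplification of $(PG(r-1,q)|G)/e$ exactly when the line $\ell_x = \cl_P(\{e,x\})$ contains at least one green point of $PG(r-1,q)$ (other than possibly $e$ itself); otherwise every point of $\ell_x \setminus \{e\}$ is red and $x$ becomes red. So I need: $\ell_x$ contains a green point of $G$ if and only if $x \in F_{i+1}' - F_i'$ for some even $i$. For the forward direction, suppose $\ell_x$ meets $G$, say in a point $g$ with $g \in F_{j+1} - F_j$ for some even $j$; then $x \in \cl_P(\{e,g\})/e \subseteq F_{j+1}'$, and I must argue $x \notin F_j'$, which is where the subtlety lies. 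For the reverse direction, if $x \in F_{i+1}' - F_i'$ with $i$ even, lift $x$ back: $\cl_P(\{e,x\})$ meets $F_{i+1}$ but is not contained in $\cl_P(F_i \cup \{e\})$, and I want to find a green point on this line.

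The main obstacle is handling the position of $e$ relative to the flats $F_i$ — whether $e$ is green or red and which is the smallest flat $F_m$ containing $e$. When $e \in F_m \setminus F_{m-1}$, the flats $F_i$ for $i < m$ satisfy $\cl_P(F_i \cup \{e\})$ strictly larger than $F_i$, so the images $F_i'$ may "jump" and several consecutive $F_i'$ can coincide; the canonical sequence of the contraction is obtained by deleting repeats, and I must verify that deleting a repeated flat $F_i' = F_{i+1}'$ removes a block of indices of even length (or otherwise preserves the parity structure so that the union over even indices is unchanged). The cleanest way to manage this is a short case analysis on the parity of $m$ and on the color of $e$: if $e$ is green (so $m$ is odd in the sense that $e \in F_{i+1}-F_i$ for some even $i$... ) — more precisely, if $e \in F_{m+1} - F_m$ then for $i \leq m$ the flat $\cl_P(F_i \cup\{e\}) = \cl_P(F_m \cup \{e\})$ has the same image, so $F_0' = \cdots = F_m'$ collapses to a single flat and the surviving sequence is $(F_0', F_{m+1}', F_{m+2}', \dots, F_k')$; checking that this still begins with $\emptyset$ and still has the green set equal to $G'$ is then a direct parity bookkeeping. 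I expect this bookkeeping — confirming that collapsing the initial segment does not flip the colors and that a green $e$ versus red $e$ both lead to valid (possibly reindexed) nested sequences — to be the only real work; once the single-contraction case is settled, closure under arbitrary contractions follows by induction on the number of contracted elements.
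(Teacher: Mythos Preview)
Your overall strategy---push the nested sequence through the contraction map and check that the images define the contracted target---is the same as the paper's, and for the flats $F_j$ with $j$ above the index of $e$ your plan would work just as the paper's does. The gap is in the inner flats. With $e \in F_{m+1} \setminus F_m$ in your indexing, your claim that $\cl_P(F_i \cup \{e\}) = \cl_P(F_m \cup \{e\})$ for all $i \le m$ is false: since $e \notin F_i$ for $i \le m$, the flat $\cl_P(F_i \cup \{e\})$ has rank $r(F_i)+1$, and these ranks strictly increase along the canonical chain, so the $F_i'$ do \emph{not} collapse. Worse, your candidate $G' = \bigcup_{i\text{ even}}(F'_{i+1}-F'_i)$ taken over the full sequence is in general not the green set of the contraction. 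For instance, take $r=3$ and $(F_0,F_1,F_2,F_3)=(\emptyset,\{p\},L,E(PG(2,q)))$ with $p$ a point of the line $L$; then $G = \{p\} \cup (E(PG(2,q))\setminus L)$, and for any green $e \notin L$ one has $F_2' = F_3' = E(PG(1,q))$, so your $G'$ is the single point $F_1'$. But every line through $e$ meets $E(PG(2,q))\setminus L$ in $q-1 \ge 1$ points other than $e$, so under the stated coloring rule every point of $\si(G/e)$ is green. Your forward-direction worry (``I must argue $x \notin F_j'$'') is exactly where this breaks, and it is not mere bookkeeping: it can fail outright.

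What the paper uses, and what your plan is missing, is the observation that after contracting $e$ the whole flat $F_{m+1}/e$ becomes monochromatic green, even though $F_{m+1}$ contained red points. The argument is short: any red point $x$ of $F_{m+1}$ lies in $F_m$, and then every point of $\cl_P(\{e,x\})$ other than $x$ lies in $F_{m+1}\setminus F_m$ and is therefore green; hence every parallel class inside $F_{m+1}/e$ contains a green representative. This is why one discards the inner flats entirely and takes $(\emptyset,\, F_{m+1}/e,\, F_{m+2}/e,\, \dots,\, F_k/e)$ as the nested sequence for $\si(G/e)$---not because the $F_i'$ coincide, but because the coloring below $F_{m+1}$ has been flattened out by the contraction. (Incidentally, $e$ is necessarily green, since you are contracting an element of $PG(r-1,q)|G$; the case split on the color of $e$ is not needed.)
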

\begin{proof}    
Let $(G,R)$ be a $2$-coloring of $PG(r-1,q)$. Assume that $G$ is a target. Then there is a canonical nested sequence $(F_0, F_1, \dots, F_k)$ of projective flats such that $G$ is the union of all sets $F_{i+1} - F_i$ for $i$ even. Let $e$ be an element of $ F_m - F_{m-1}$ where $F_m$ is a green flat. Then the elements of $F_m - F_{m-1}$ are green. Suppose $x$ is a red point in $F_m$. Then $x \in F_{m-1}$. If $y \in \cl_P(\{e,x\})$, then $y \not\in F_{m-1}$, otherwise the circuit $\{e,x,y\}$ gives the contradiction that $e$ is an element of  $F_{m-1}$. Since $\{e,x\} \subseteq F_m$, we must have that $y$ is in $ F_m$, so $y$ is in $ F_m - F_{m-1}$. Hence $y$ is green. We deduce that, in the contraction of $e$, every element of $F_m - e$ is green.

Now assume $F_j$ is a red flat containing $F_m$. Then $F_j - F_{j-1} \subseteq R$. Consider a  point $z \text{ in } F_j - F_{j-1}$. Using a symmetric argument to that given above, we deduce that $e$ is the only point of $\cl_P(\{e,z\})$ not in $F_j-F_{j-1}$. Therefore, the points in $(F_j - F_{j-1}) - e$ are red. Clearly, if $F_k$ is a green flat containing $F_m$, then the points in $(F_k - F_{k-1}) - e$ are green. Thus, in $\si(PG(r-1,q) / e)$, we have $(\si(F_m - e), \si(F_{m+1} - e),\dots,\si(F_k - e))$ as a nested sequence of projective flats. Writing this new nested sequence of projective flats in $PG(r-2,q)$ as $(F_m',F_{m+1}',\dots, F_k')$, we see that $F_m'$ is entirely green and, for each $i\geq 1$, the set $F_{m+i}' - F_{m+i-1}'$ is entirely red if $i$ is odd and is entirely green if $i$ is even. Hence $\si(G/e)$ is a target.\end{proof}

Combining Lemma~\ref{targetinudcedres} and Proposition \ref{targetcontraction}, we get the following.
\begin{corollary}
    The class of targets over $GF(q)$ is closed under induced minors.
\end{corollary}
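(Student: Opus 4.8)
The final statement to prove is the Corollary: the class of targets over $GF(q)$ is closed under induced minors. This is an immediate consequence of the two results stated just before it, so the proof is essentially a one-line combination argument, but I will spell out the plan carefully.

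\textbf{Plan.} Recall that an induced minor of a simple matroid $M$ is any simple matroid obtained by a finite sequence of operations, each of which is either a contraction (with the result simplified) or an induced restriction (restriction to a flat). The goal is to show that if $M$ is a target over $GF(q)$ and $N$ is an induced minor of $M$, then $N$ is also a target over $GF(q)$. The natural approach is induction on the length $\ell$ of a sequence of operations witnessing that $N$ is an induced minor of $M$.

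\textbf{Key steps.} First, I would handle the base case $\ell = 0$: here $N = M$, which is a target by hypothesis, so there is nothing to prove. For the inductive step, suppose the claim holds for all sequences of length less than $\ell$, and let $N$ be obtained from $M$ by a sequence of $\ell \geq 1$ operations. Apply the first operation to $M$ to obtain an intermediate matroid $M'$; then $N$ is obtained from $M'$ by the remaining $\ell - 1$ operations. It suffices to show $M'$ is a target over $GF(q)$, since then the inductive hypothesis applied to $M'$ and its induced minor $N$ finishes the argument. Now split into two cases according to the type of the first operation. If the first operation is an induced restriction, then $M'$ is an induced restriction of the target $M$, so $M'$ is a target by Lemma~\ref{targetinudcedres}. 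If the first operation is a contraction (followed by simplification), then $M'$ is a (simple) contraction of the target $M$, so $M'$ is a target by Proposition~\ref{targetcontraction}. Either way $M'$ is a target over $GF(q)$, completing the induction.

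\textbf{Main obstacle.} Honestly, there is no real obstacle here: the corollary is a formal bookkeeping consequence of the two preceding results, and the only thing to be careful about is that the definition of induced minor allows the two operation types to be interleaved in any order, which the induction on sequence length handles cleanly without needing any commutation lemma between contraction and induced restriction. One small point worth noting is that both Lemma~\ref{targetinudcedres} and Proposition~\ref{targetcontraction} are stated for a single operation, so the induction is genuinely needed rather than a direct appeal; but this is routine. I would therefore keep the proof to a couple of sentences: ``This follows immediately by combining Lemma~\ref{targetinudcedres} and Proposition~\ref{targetcontraction}, using induction on the number of operations needed to produce a given induced minor from a target.''
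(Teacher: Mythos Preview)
Your proposal is correct and matches the paper's approach exactly: the paper states the corollary as an immediate consequence of Lemma~\ref{targetinudcedres} and Proposition~\ref{targetcontraction}, and your induction on the length of the operation sequence is precisely the routine unpacking of that combination.
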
\label{inducedminor}

\begin{lemma}
    Let $(G,R)$ be a $2$-coloring of $PG(r-1,q)$. If $G$ is a target, then $G$ and $R$ are connected unless $q = 2$ and $G$ or $R$ is $U_{2,2}$.
\end{lemma}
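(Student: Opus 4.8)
The plan is to reduce to a statement about a single target and then exploit its block structure. Since $PG(r-1,q)|R$ is a target whenever $PG(r-1,q)|G$ is, and since $|G|+|R|=|PG(r-1,q)|$ is odd when $q=2$, it suffices to prove that a target $M=PG(r-1,q)|G$ over $GF(q)$ is connected unless $q=2$ and $M\cong U_{2,2}$; applying this to $G$ and to $R$ then yields the lemma. Fix a canonical nested sequence $(F_0,F_1,\dots,F_k)$ defining $M$ and, for each relevant $t$, put $B_t=F_{2t+1}-F_{2t}$, so that $G$ is the disjoint union of the \emph{green blocks} $B_0,B_1,\dots$. By canonicity, each $B_t$ with $t\geq 1$ that is defined is nonempty, whereas $B_0=F_1$ may be empty. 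If $|G|\leq 1$ then $M$ is connected, and if $k\leq 2$ then $M$ is empty or is a projective geometry and hence connected; so assume $|G|\geq 2$ and $k\geq 3$, and let $B_{t_0}$ be the lowest nonempty green block. Since $B_1=F_3-F_2\neq\emptyset$, we have $t_0\in\{0,1\}$.

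Two facts drive the argument. First, if $p$ and $p'$ are green points for which the projective line $\cl_P(\{p,p'\})$ is monochromatically green, then $p$ and $p'$ lie in a common circuit of $M$, because $M|\cl_P(\{p,p'\})\cong U_{2,q+1}$ and $q+1\geq 3$. Second, if $p\in F_j-F_{j-1}$ and $p'\in F_{j-1}$, then $\cl_P(\{p,p'\})-p'\subseteq F_j-F_{j-1}$: the line lies in $F_j$, and any point of it other than $p'$ lying in $F_{j-1}$ would force the whole line into $F_{j-1}$, contradicting $p\notin F_{j-1}$. Recalling from [\ref{James}] that lying in a common circuit is an equivalence relation on $E(M)$ and that $M$, having at least two elements, is connected if and only if this relation has a single class, it suffices to show that all green points are equivalent.

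Suppose first that some green block $B_t$ with $t>t_0$ is nonempty. For any $p\in B_t$ and any $p'\in B_{t_0}$ we have $p'\in F_{2t_0+1}\subseteq F_{2t}$ and $p\in F_{2t+1}$, so $\cl_P(\{p,p'\})\subseteq F_{2t+1}$; the second fact, applied with $j=2t+1$, shows that this line is monochromatically green, whence $p$ and $p'$ are equivalent by the first fact. Thus every point lying in a green block above $B_{t_0}$ is equivalent to every point of $B_{t_0}$, so all green points are equivalent and $M$ is connected.

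Otherwise $G=B_{t_0}$. If $t_0=0$, then $G=F_1$ is a nonempty projective geometry, hence connected. If $t_0=1$, then $F_1=\emptyset$, so by canonicity $F_2$ is a nonempty proper flat of the projective geometry $F_3\cong PG(d-1,q)$, where $d=r(F_3)\geq 2$, and $M\cong PG(d-1,q)-F$ for a nonempty proper subflat $F$; it thus remains to show that $PG(d-1,q)-F$ is connected unless $q=2$ and $d=2$, in which case $F$ is a point and $PG(d-1,q)-F\cong U_{2,2}$. For $q\geq 3$ this is immediate, since the projective line through any two points of $PG(d-1,q)-F$ meets $F$ in at most one point and hence contains at least $q\geq 3$ of its points, giving a triangle through the chosen pair. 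The case $q=2$ is the main obstacle; here I would argue that if $PG(d-1,2)-F$ were disconnected then, since its ground set spans $PG(d-1,2)$, a direct-sum decomposition $M_1\oplus M_2$ would produce complementary flats $T_1$ and $T_2$ with $E(M_i)=T_i-F$ for $i\in\{1,2\}$, so that $PG(d-1,2)-(T_1\cup T_2)\subseteq F$; a point count then shows that for $d\geq 3$ this set lies in no hyperplane of $PG(d-1,2)$, contradicting $r(F)\leq d-1$, while for $d=2$ one is forced to have $F$ a single point and $M\cong U_{2,2}$. Combining the cases completes the proof.
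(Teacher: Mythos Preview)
Your argument is essentially correct but takes a substantially longer route than the paper's. The paper observes that, after assuming $r(G)\ge r(R)$ (so $r(G)=r$ by Lemma~\ref{fullrank}) and $G\neq E(PG(r-1,q))$, the outermost ring $F_k-F_{k-1}$ is green; since $F_{k-1}$ lies in a hyperplane, $G$ contains a spanning copy of $AG(r-1,q)$, and an affine geometry of rank at least two is connected unless it is $AG(1,2)\cong U_{2,2}$. A connected spanning restriction forces $G$ to be connected. The same reasoning, applied inside $\cl_P(R)$, handles $R$. Your block-by-block argument, linking every higher block to $B_{t_0}$ through a monochromatic green line, reproduces this conclusion but with considerably more bookkeeping; what it buys is an explicit circuit through any prescribed pair of green points, which the paper's proof does not supply.

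One point in your $q=2$ endgame needs tightening. When $r_1=1$ (or $r_2=1$), the complement $PG(d-1,2)-(T_1\cup T_2)$ has exactly $2^{d-1}-1$ elements, the same as a hyperplane, so a pure cardinality count does not exclude its lying in $F$. You need the stronger statement that this complement has rank $d$ for $d\ge 3$: with $T_1=\{u\}$ and $T_2$ a complementary hyperplane, the complement is $\{u+t:t\in T_2\}$, and for distinct $t,t'\in T_2$ the binary line through $u+t$ and $u+t'$ passes through $t+t'\in T_2$, so the closure contains $T_2$ and then $u$, hence everything. Replacing ``point count'' by this short rank argument completes your proof. Note, incidentally, that the observation you use here, namely that $PG(d-1,2)-F$ spans, already implies that it contains a spanning $AG(d-1,2)$, which is exactly the paper's shortcut.
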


\begin{proof}
Assume that the exceptional case does not arise and that $r(G) \geq r(R)$. If $G = PG(r-1,q)$, then the result holds. Assume $G$ is not the whole projective geometry. Then $G$ contains $AG(r-1,q)$, so $G$ is connected. Similarly, $R$ will also have an affine geometry as a restriction. Thus $R$ is certainly connected when $r(R) = r(G)$. Assume $r(R) < r(G)$. Take a projective flat $F$ that has $R$ as a spanning restriction. Then $r(R) \geq r(G\cap F)$ so, as above, we deduce that $R$ is connected. 
\end{proof}

If $(G,R)$ is a $2$-coloring of $PG(r-1,q)$, then $G$ is a \textit{minimal non-target} if $G$ is not a  target but every proper induced restriction of $G$ is a target. Clearly, if $G$ is a minimal non-target, then $R$ is not a target. But if $r(R) > r(G)$, then $R$ is not a minimal non-target.

\begin{lemma} \label{rankcomp}
Let $(G,R)$ be a $2$-coloring of $PG(r-1,q)$. Suppose $PG(r-1,q)|G$ is a minimal non-target of rank $r$. Then $r(R) = r$.
\end{lemma}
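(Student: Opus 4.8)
The plan is to prove the contrapositive-flavored statement by assuming $r(R) < r$ and deriving that $G$ is in fact a target, contradicting the hypothesis that $G$ is a non-target. So first I would record the easy inequality: since $G$ spans $PG(r-1,q)$ while $R$ does not, we have $r(R) \le r$, and the content is to rule out $r(R) < r$. Assume for contradiction that $r(R) = s < r$. Then $R$ is contained in some projective flat $F$ with $r(F) = s$, and by Lemma~\ref{fullrank} applied to $F$, exactly one of $G\cap F$ and $R\cap F = R$ has rank $r(F) = s$; since $R$ spans $F$, it is $R$ that has full rank, so $G \cap F$ has green rank strictly less than $s$.

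Next I would use minimality: $G \cap F$ is a proper induced restriction of $G$ (proper because $F \ne E(PG(r-1,q))$, as $s < r$), hence $PG(r-1,q)|(G\cap F)$ is a target. Thus there is a canonical nested sequence $(F_0, F_1, \dots, F_k)$ of projective flats inside $F$ realizing $G \cap F$ as the union of the $F_{i+1} - F_i$ for even $i$, with $F_k = F$. The key observation is that $F$ itself is a \emph{red} flat in this picture: its top interval $F_k - F_{k-1}$ must consist of red points, because $R$ spans $F$ but $G \cap F$ does not, so the last flat added is red. Now I extend this nested sequence to all of $PG(r-1,q)$: the points of $E(PG(r-1,q)) - F$ are exactly $G - F$ (since all of $R$ lies in $F$), so appending $F_{k+1} = E(PG(r-1,q))$ gives a nested sequence in which the final interval $F_{k+1} - F_k = G - F$ is entirely green. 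Because $F = F_k$ is a red flat and we are adding a green flat on top, the parity works out: $(F_0, F_1, \dots, F_k, F_{k+1})$ is a nested sequence of projective flats whose even-indexed differences union to exactly $G$. Hence $G$ is a target, the desired contradiction, so $r(R) = r$.

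I should double-check the parity bookkeeping, which is where a slip is most likely. In the canonical sequence for $G \cap F$, since $F_0 = \emptyset$ contributes to $G$ (the interval $F_1 - F_0$ is the first green block when $k \ge 1$), and $G \cap F$ not spanning $F$ forces $F = F_k$ to be a red flat, meaning $k$ is odd (the green blocks are $F_1-F_0, F_3-F_2, \dots$ and the last block $F_k - F_{k-1}$ is red, so $k$ is odd). Then $k+1$ is even, so the block $F_{k+1} - F_k = G - F$ is green, exactly as required. One should also handle the degenerate possibility that $G \cap F$ is empty, i.e.\ $F \subseteq R$; then $R = F$ and the nested sequence is just $(\emptyset, F, E(PG(r-1,q)))$, which still realizes $G$ as a target. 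The main obstacle, such as it is, is being careful that $F$ can be chosen to be an honest flat spanning $R$ and that the extension step respects the "canonical" structure, but no deep idea is needed beyond Lemma~\ref{fullrank} and the minimality hypothesis; the argument is essentially a rank-counting plus parity argument.

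Here is the proof.

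\begin{proof}
Since $G$ spans $PG(r-1,q)$, we have $r(R) \le r$, and it suffices to show that $r(R) < r$ is impossible. Suppose to the contrary that $r(R) = s$ with $s < r$, and let $F$ be a projective flat with $r(F) = s$ that spans $R$; thus $R \subseteq F$ and $R \ne E(PG(r-1,q))$ forces nothing new, but note $F$ is a proper flat since $s < r$. By Lemma~\ref{fullrank}, exactly one of $G \cap F$ and $R \cap F$ has rank $r(F)$. As $R = R \cap F$ spans $F$, it is $R$ that has full rank in $F$, so $r(G \cap F) < r(F) = s$.

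Now $G \cap F = PG(r-1,q)|(G \cap F)$ is an induced restriction of $G$ to the flat $F$, and it is proper because $F \ne E(PG(r-1,q))$. By the minimality of $G$, the matroid $PG(r-1,q)|(G \cap F)$ is a target. Hence there is a canonical nested sequence $(F_0, F_1, \dots, F_k)$ of projective flats with $F_k = F$ such that $G \cap F$ is the union of the sets $F_{i+1} - F_i$ over even $i$. Because $r(G \cap F) < r(F)$, the flat $F$ is not green, so $F_k - F_{k-1} \subseteq R$; equivalently, $k$ is odd. (If $G \cap F = \emptyset$, take this sequence to be $(\emptyset, F)$, so $k = 1$.)

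Finally, every point of $E(PG(r-1,q)) - F$ lies in $G$, since $R \subseteq F$. Set $F_{k+1} = E(PG(r-1,q))$. Then $(F_0, F_1, \dots, F_k, F_{k+1})$ is a nested sequence of projective flats, and since $k$ is odd, $k+1$ is even, so the set $F_{k+1} - F_k = G - F$ is among the even-indexed differences. Therefore the union of all $F_{i+1} - F_i$ for even $i$ is $(G \cap F) \cup (G - F) = G$, so $G$ is a target. This contradicts the hypothesis that $G$ is a non-target. Hence $r(R) = r$.
\end{proof}
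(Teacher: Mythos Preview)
Your strategy matches the paper's: assume $r(R) < r$, pass to a proper flat containing $R$, use minimality to see the restriction is a target, then deduce $G$ itself is a target. The paper does this more tersely by picking a hyperplane $H \supseteq R$, noting $G\cap H$ is a target, hence $R = R\cap H$ is a target, hence $G$ is a target (using twice that the complement of a target is a target). You instead build the nested sequence for $G$ by hand, which is fine in principle.

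Two genuine slips, though. First, you invoke Lemma~\ref{fullrank} on the $2$-coloring $(G,R)$ and the flat $F$ to obtain $r(G\cap F) < s$, but that lemma has as a hypothesis that $G$ is a target, and here $G$ is by assumption \emph{not} one. The repair is to swap the order: first use minimality to conclude that $PG(r-1,q)|(G\cap F)$ is a target, and only then apply Lemma~\ref{fullrank} inside $F$ to the $2$-coloring $(G\cap F, R\cap F)$ of $PG(r-1,q)|F$. Second, your parity bookkeeping is off. Under the paper's convention the green blocks are $F_{i+1}-F_i$ for $i$ even, so $F_j - F_{j-1}$ is green exactly when $j$ is odd; thus $F_k - F_{k-1} \subseteq R$ forces $k$ \emph{even}, not odd, and the new top block $F_{k+1}-F_k$ is the one with lower index $i = k$ even, hence green as desired. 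You claim $k$ is odd and then treat $k+1$ rather than $k$ as the relevant index; the two mistakes happen to cancel, so the extended sequence you write down does realize $G$, but the justification as written does not parse against the paper's definition.
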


\begin{proof}
Assume $r(R) < r$. Then there is a hyperplane $H$ containing $R$. Since $PG(r-1,q)|(G\cap H)$ is a target, $R$ is a target. However, this implies that $G$ is a  target, a contradiction. Therefore $r(R) = r$.
\end{proof}

%%%%%%%%%%%%%%%%%%%%%%%%%%%%%%%%%%%%%%%%%%%%%%%%%%%%%%%%%%%%%%%%%%%%%%%%%%%%%%%%%
\section{Forbidden Induced Restrictions of Target Matroids}\label{projmainthm}

This section contains a common proof of Theorems \ref{binarytarget} and \ref{qarytarget}. This proof closely follows the proof of Theorem $4.7$ of Singh and Oxley [\ref{SINGH}].

%%%%%%%%%%%%%%%%%%%%%%%%%%%%%%%%%%%%%%

\begin{proof}[Proof of Theorems \ref{binarytarget} and \ref{qarytarget}]
Assume that $G$ is a target. First, suppose $q = 2$. If there is a projective flat $F$ such that $PG(r-1,2)|(G\cap F) \cong U_{3,3}$, then $PG(r-1,2)|(R \cap F) \cong U_{2,3} \oplus U_{1,1}$. Since $PG(r-1,2)|(G\cap F)$ is a target, this contradicts Lemma~\ref{fullrank}, as $r(G \cap F) = r(R \cap F)$. Now assume $q \geq 3$. If there is a projective flat $F$ such that $PG(r-1,q)|(G\cap F)$ is any of $U_{2,2}, U_{2,3}, \dots, U_{2,q-2},$ or $U_{2,q-1}$, then, letting $F' = \cl_P(G\cap F)$, we have $r(G\cap F') = r(R\cap F')$, a contradiction to Lemma~$\ref{fullrank}$.

Let $(G,R)$ be a $2$-coloring of $PG(r-1,q)$. Suppose that $G$ is a rank-$r$ minimal non-target. In addition, when $q = 2$, assume that $G$ does not have $U_{3,3}$ or $U_{2,3}\oplus U_{1,1}$ as an induced restriction; and when $q \geq 3$, assume instead that $G$ does not have $U_{2,2},U_{2,3}, \dots,U_{2,q-2},$ or $U_{2,q-1}$ as an induced restriction. Then, by Lemma~\ref{rankcomp},  $r(R) = r$. Clearly, $r\geq 4$ when $q = 2$, and  $r\geq 3$ when $q \geq 3$.
\setcounter{theorem}{1}
\begin{sublemma} \label{binaryflat}
When $q\geq 2$, each green hyperplane $H$ contains at most one red rank-$(r-2)$ flat.
\end{sublemma}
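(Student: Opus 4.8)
The plan is to argue by contradiction: suppose a green hyperplane $H$ contains two distinct red rank-$(r-2)$ flats, say $P$ and $Q$. The key structural fact to exploit is that $H$ is green, meaning $r(G\cap H)=r-1$, while both $P$ and $Q$ are red, meaning $r(R\cap P)=r-2=r(P)$ and $r(R\cap Q)=r-2=r(Q)$. Since $P$ and $Q$ are distinct flats of rank $r-2$ inside the rank-$(r-1)$ flat $H$, their intersection $P\cap Q$ is a flat of rank $r-3$, and $\cl_P(P\cup Q)=H$. I would first observe that $G\cap H$, being a target of rank $r-1$ (by Lemma~\ref{targetinudcedres}), has a canonical nested sequence of projective flats ending at $H$, and since $H$ is green, the last step of that sequence adds green points: the top flat in the sequence is green.

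Next I would use Lemma~\ref{fullrank} applied inside $H$: every projective flat of $H$ is either green or red but not both. In particular, consider a point $x\in P - (P\cap Q)$ and a point $y\in Q-(P\cap Q)$; both are red. Look at the line $\ell=\cl_P(\{x,y\})$ in $H$. Since $G\cap H$ is a target with canonical nested sequence $(F_0,F_1,\dots,F_k=H)$, and $x,y$ are red, they lie in some $F_i$ with $i$ odd (a red flat in the sequence). The heart of the argument is that because $P$ and $Q$ are both \emph{maximal} red rank-$(r-2)$ flats in $H$, the red part $R\cap H$ cannot be confined to a single proper flat of $H$ containing both $P$ and $Q$ — but it also must be, because in the nested sequence the red elements of $H$ all lie in the largest red flat $F_j$ in that sequence, which has rank at most $r-2$ and therefore, being a flat containing the rank-$(r-2)$ flat $P$, must equal $P$; likewise it must equal $Q$, forcing $P=Q$, a contradiction.

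I expect the main obstacle to be pinning down precisely why \emph{all} the red points of $H$ must lie in a single flat of the nested sequence of rank $\le r-2$. The subtlety is that the nested sequence can have several red levels $F_i-F_{i-1}$ with $i$ odd, so "the red elements lie in one flat" is not immediate — one must use that $H$ is green, hence the \emph{top} level $F_k-F_{k-1}$ is green, so the largest red flat appearing is some $F_j$ with $j<k$, and then argue that $R\cap H\subseteq F_j$ with $r(F_j)\le r(F_{k-1})\le r-2$. Once that is established, the fact that $F_j$ is a flat of $H$ containing the rank-$(r-2)$ flat $P$ forces $F_j=P$ (as $P$ is already as large as possible inside $H$ short of being $H$ itself), and symmetrically $F_j=Q$, giving $P=Q$. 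I would also need to handle the small cases $q=2$ versus $q\ge 3$ uniformly, but since the argument only uses ranks of flats and Lemma~\ref{fullrank}, which hold for all $q$, no case split should be needed here.
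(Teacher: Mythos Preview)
Your argument is correct but takes a longer route than the paper's. You invoke the nested-sequence structure of the target $G\cap H$ to locate a single flat $F_{k-1}$ of rank at most $r-2$ containing all of $R\cap H$, and then force $F_{k-1}=P=Q$. Your worry about ``several red levels'' is unnecessary: once $H$ is green the top ring $H-F_{k-1}$ is green, so every red element of $H$ automatically lies in $F_{k-1}$, regardless of how many red rings sit below. One step you leave implicit is why $F_{k-1}$ contains $P$ itself rather than merely $R\cap P$; this is because $r(R\cap P)=r-2$, so $\cl_P(R\cap P)=P$, and $F_{k-1}$ is a flat. The detour through the line $\cl_P(\{x,y\})$ is never used and can be dropped.

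The paper bypasses the nested sequence with a direct rank count: since $r(R\cap F_1)=r-2$ and there is a red point $z\in F_2-F_1$ (because $r(R\cap F_2)=r-2>r-3=r(F_1\cap F_2)$), the red elements of $H$ have rank at least $r-1$, contradicting Lemma~\ref{fullrank}. Both arguments rest on the same tension---$H$ being green forces $r(R\cap H)\le r-2$, while two distinct red rank-$(r-2)$ subflats force $r(R\cap H)\ge r-1$---but the paper reaches the contradiction in two lines without unpacking the target structure of $G\cap H$.
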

Assume that $H$ contains at least two red flats, $F_1$ and $F_2$, of rank $r-2$. Then, all the elements of $F_1 - F_2$ are red. Adding an element $z$ of $F_2 - F_1$ to $F_1 - F_2$ gives a subset of $H$ whose red rank is $r-1$. This is a contradiction as $H$ is a green hyperplane.  Thus \ref{binaryflat} holds.

Consider a rank-$(r-2)$ projective flat $F$. Then $F$ is contained in exactly $q+1$ projective hyperplanes. Assume that $r(R\cap F) = r(F)$. We make the following observations.

\begin{sublemma}\label{binaryhyperplane}
When $q=2$, at most two of $H_1,H_2$ and $H_3$ are green.
\end{sublemma}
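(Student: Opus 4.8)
The plan is to argue by contradiction using the same kind of rank-counting that established \ref{binaryflat}. Suppose that all three hyperplanes $H_1,H_2,H_3$ containing $F$ are green, where $F$ is a rank-$(r-2)$ flat with $r(R\cap F)=r(F)=r-2$. Since each $H_i$ is green, $r(G\cap H_i)=r-1$ for every $i$. The idea is to exploit the fact that, over $GF(2)$, the three hyperplanes through $F$ partition $E(PG(r-1,2))-F$ into three parts $H_i-F$, and that $H_i-F$ together with $F$ spans $H_i$; more usefully, picking a green element $g_i\in H_i-F$ for each $i$, the element $g_i+g_j$ lies in the third hyperplane $H_\ell$ (where $\{i,j,\ell\}=\{1,2,3\}$), since $H_i$, $H_j$, $H_\ell$ are exactly the three hyperplanes through their common rank-$(r-2)$ flat $F$.

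First I would fix a basis $\{f_1,\dots,f_{r-2}\}$ of $R\cap F$, which is also a basis of $F$. Because each $H_i$ is green, I can choose $g_i\in G\cap(H_i-F)$; then $\{f_1,\dots,f_{r-2},g_i\}$ is a green-spanning set of $H_i$, so $G\cap H_i$ has rank $r-1$, but in fact I want to locate the green points more carefully. The key step is: consider the rank-$3$ flat $F''$ that is the projective closure of $\{f_1,g_1,g_2\}$ (or more precisely work inside the rank-$(r-1)$ span and pass to a suitable rank-$3$ minor), and observe that $g_1+g_2\in H_3$ while $f_1\in R\cap F$. Contracting down to reduce to small rank (legitimate since targets are closed under contraction by Proposition~\ref{targetcontraction}, hence so is the minimal-non-target hypothesis in the relevant sense), I would aim to produce a rank-$3$ green induced restriction forcing either a green $U_{3,3}$-type configuration or, on the red side, a configuration contradicting Lemma~\ref{fullrank}. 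Concretely, the three green lines through the red point corresponding to $f_1$, one into each $H_i$, should force $PG(r-1,2)|(G\cap F')\cong U_{3,3}$ for some rank-$3$ flat $F'$, which is exactly the excluded induced restriction.

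Alternatively, and perhaps more cleanly matching the style of \ref{binaryflat}: since $r(R\cap F)=r-2$, pick a spanning red subset of $F$; if two of the $H_i$, say $H_1$ and $H_2$, were green, I would look at whether $R\cap(H_1\cup H_2)$ can have rank $r-1$ — and the whole point of allowing at most two green is that the third hyperplane $H_3$ is then forced to be red (by Lemma~\ref{fullrank} applied to each hyperplane: each $H_i$ is either green or red, and we must rule out all three being green). So the real content is just: \emph{not all three of $H_1,H_2,H_3$ can be green}. For this, take green elements $g_1\in H_1-F$ and $g_2\in H_2-F$; then $g_1+g_2\in H_3-F$. Now $\{g_1,g_2\}\cup(R\cap F)$ has rank $r-1$ and contains $g_1+g_2$ in its closure; consider its intersection with the green/red coloring restricted to the rank-$(r-1)$ flat $\cl_P(\{g_1,g_2\}\cup F)=H_3$ — wait, that flat is $H_3$ only if it has rank $r-1$, which it does. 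Inside $H_3$, the set $G\cap H_3$ has rank $r-1$ (as $H_3$ is assumed green), but $R\cap H_3\supseteq R\cap F$ has rank $r-2$, and I want to push $R\cap H_3$ to rank $r-1$ using a red point outside $F$ inside $H_3$; if no such red point exists, then $H_3-F\subseteq G$, and then $g_1+g_2$ is green, so $\{g_1,g_2,g_1+g_2\}$ together with a suitable basis extension yields the forbidden induced restriction $U_{3,3}$ in a rank-$3$ flat (taking the projective closure of $\{g_1,g_2\}$ intersected appropriately), contradicting the standing assumption.

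The main obstacle I expect is the bookkeeping needed to descend to rank $3$ and exhibit the forbidden $U_{3,3}$ (or to derive the rank contradiction directly): one must make sure the rank-$3$ flat one restricts to actually contains exactly three green points forming an independent set, and that the ambient coloring does not sneak in a fourth collinear green point that would collapse $U_{3,3}$ to $U_{2,3}$ or destroy the configuration. Handling the interaction between $F$ (which is red-spanning) and the green elements $g_1,g_2,g_1+g_2$ — in particular confirming that some red point of $F$ together with these three green points lies in a common rank-$3$ flat with no extra green points — is where the care is required; the rest is routine linear algebra over $GF(2)$ plus invocation of Lemma~\ref{fullrank} and Proposition~\ref{targetcontraction}.
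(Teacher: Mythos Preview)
Your proposal overlooks the one-line argument that the paper uses, and the alternative route you pursue has a concrete error.

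You correctly note (buried in your second approach) that if $H_3$ is green and $R\cap F$ spans $F$, then a red point in $H_3-F$ would give $r(R\cap H_3)=r-1$, contradicting Lemma~\ref{fullrank}; hence $H_3-F\subseteq G$. But this reasoning applies verbatim to $H_1$ and $H_2$ as well. Since $E(PG(r-1,2))-F=(H_1-F)\cup(H_2-F)\cup(H_3-F)$, every red point lies in $F$, so $r(R)\le r-2$, contradicting Lemma~\ref{rankcomp}. That is the entire proof in the paper; there is no need to descend to rank~$3$ or hunt for a forbidden configuration.

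Your detour toward exhibiting $U_{3,3}$ cannot work as stated: the elements $g_1,g_2,g_1+g_2$ are \emph{collinear} in $PG(r-1,2)$, so they span a $U_{2,3}$, not a $U_{3,3}$. If instead you pass to the rank-$3$ flat $F''=\cl_P(\{f_1,g_1,g_2\})$ with $f_1\in R\cap F$, then (using $H_i-F\subseteq G$) all six points of $F''-\{f_1\}$ are green, and $G\cap F''$ is the target corresponding to the chain $\emptyset\subset\{f_1\}\subset F''$; no forbidden induced restriction arises. Finally, your remark that the minimal-non-target hypothesis is preserved under contraction is not justified: Proposition~\ref{targetcontraction} says targets are closed under contraction, which tells you nothing about contractions of a non-target.
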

Assume that all three hyperplanes are green. Then, all the elements of each of $H_1 - F, H_2 - F$ and $H_3 - F$ are monochromatic green, so $r(R) = r(F) = r-2$, a contradiction to Lemma~\ref{rankcomp}. Thus \ref{binaryhyperplane} holds.
\begin{sublemma}\label{2redhyperplanes}
    When $q\geq 3$, there are at least two red hyperplanes containing $F$.
\end{sublemma}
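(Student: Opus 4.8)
The plan is to argue by contradiction: suppose at most one of the $q+1$ projective hyperplanes $H_1,\dots,H_{q+1}$ containing $F$ is red. Since $G$ is a minimal non-target, each $G\cap H_i$ is a proper induced restriction of $G$ and so is a target; hence Lemma~\ref{fullrank}, applied inside $H_i\cong PG(r-2,q)$, shows that each $H_i$ is either green or red. Thus at least $q\geq 3$ of the $H_i$ are green, and we renumber so that $H_1,\dots,H_q$ are green. The goal is to prove $H_i-F\subseteq G$ for every $i\leq q$. Granting this, and using that $PG(r-1,q)$ is the disjoint union of $F$ with the sets $H_1-F,\dots,H_{q+1}-F$ (any two distinct hyperplanes through the rank-$(r-2)$ flat $F$ meet exactly in $F$), we get $R\subseteq F\cup(H_{q+1}-F)=H_{q+1}$, so $r(R)\leq r-1$, contradicting Lemma~\ref{rankcomp}.

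The crux is therefore: if $H$ is a green hyperplane containing $F$, then $R\cap H\subseteq F$. Since $G\cap H$ is a target of $H$ of green rank $r(H)=r-1$, Lemma~\ref{fullrank} applied inside $H$ gives $r(R\cap H)\leq r-2$. If some $e$ lay in $(R\cap H)\setminus F$, then, as $r(R\cap F)=r(F)=r-2$ and $e\notin F=\cl_P(R\cap F)$, the set $(R\cap F)\cup\{e\}\subseteq R\cap H$ would have rank $r-1$, contradicting $r(R\cap H)\leq r-2$. Hence $R\cap H\subseteq F$. (Alternatively, $r(R\cap H)=r-2$ by the same rank comparison, so $\cl_P(R\cap H)$ is a red rank-$(r-2)$ flat of the green hyperplane $H$ and thus equals $F$ by Sublemma~\ref{binaryflat}.)

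Applying the crux to the green hyperplanes $H_1,\dots,H_q$ gives $H_i-F\subseteq G$ for each $i\leq q$, completing the contradiction, so there are at least two red hyperplanes containing $F$. The main obstacle is the crux step, and the subtle point there is that Lemma~\ref{fullrank} may be invoked only because $G$ is a \emph{minimal} non-target, so each $G\cap H_i$ is a target; with that secured, the rank and closure bookkeeping is routine.
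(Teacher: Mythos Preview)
Your proof is correct and follows essentially the same approach as the paper: both hinge on the observation that a green hyperplane $H$ through the red rank-$(r-2)$ flat $F$ can have no red points in $H-F$ (else $r(R\cap H)=r-1=r(G\cap H)$, contradicting Lemma~\ref{fullrank}). The only cosmetic difference is that you treat the zero-red and one-red cases uniformly and conclude via $r(R)\le r-1$ and Lemma~\ref{rankcomp}, whereas the paper splits the cases and, in the one-red case, exhibits a single offending green hyperplane to contradict Lemma~\ref{fullrank} directly.
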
 
 As $r(R) = r$, there is at least one red hyperplane containing $F$. Now, assume there is exactly one red hyperplane containing $F$. Then, as $r(R) = r$, there is some red point in a green hyperplane $H_G$ that contains $F$. Therefore $r(G\cap H_G) = r(R\cap H_G)$, contradicting Lemma~\ref{fullrank}. Thus \ref{2redhyperplanes} holds.
\begin{sublemma}\label{BadLineHP}
    When $q\geq 3$, there is at most one green hyperplane containing $F$.
\end{sublemma}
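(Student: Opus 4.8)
The plan is to argue by contradiction. Suppose $F$ lies in two distinct green hyperplanes $H_1$ and $H_2$; I will produce a line whose green points form one of the forbidden restrictions $U_{2,2},\dots,U_{2,q-1}$. I will freely use the standard facts that the rank-$(r-2)$ flat $F$ lies in exactly $q+1$ hyperplanes $H_1,\dots,H_{q+1}$, that any two of them meet precisely in $F$, and hence that $H_1-F,\dots,H_{q+1}-F$ partition $PG(r-1,q)-F$.

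The first step is to show that the affine parts $H_1-F$ and $H_2-F$ are monochromatically green; this is where the hypothesis that $G$ is a \emph{minimal} non-target is used. Fix $i\in\{1,2\}$. Since $H_i$ is a green hyperplane, $G\cap H_i$ spans $H_i$ and hence is a flat of $PG(r-1,q)|G$, and it is proper because $r(G\cap H_i)=r-1<r$; so $PG(r-1,q)|(G\cap H_i)$ is a proper induced restriction of $G$ and therefore a target. Lemma~\ref{fullrank} now applies inside $H_i$, and since $G\cap H_i$ already has rank $r(H_i)$, it forces $r(R\cap H_i)<r-1$. If some $x\in H_i-F$ were red, then $(R\cap F)\cup\{x\}$ would span $H_i$, because $R\cap F$ spans the hyperplane $F$ of $H_i$ (as $r(R\cap F)=r-2$) and $x\notin F$; this contradicts $r(R\cap H_i)<r-1$. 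Hence $H_i-F\subseteq G$.

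The key idea is now to cross these two green hyperplanes with two red ones along a transversal line. By \ref{2redhyperplanes} there are two distinct red hyperplanes $H_3,H_4$ through $F$. Each $H_j$ with $j\in\{3,4\}$ has $r(R\cap H_j)=r-1>r-2=r(F)$, so its red points do not all lie in $F$; pick red points $p_3\in H_3-F$ and $p_4\in H_4-F$, and set $L=\cl_P(\{p_3,p_4\})$. A short incidence argument gives $L\cap F=\emptyset$: otherwise $L$ has two points in the rank-$(r-1)$ flat $\cl_P(F\cup\{p_3\})=H_3$, forcing $L\subseteq H_3$ and so $p_4\in H_3$, which contradicts the fact that $p_4$ lies in the unique hyperplane $H_4\neq H_3$ through $F$ containing it. Being disjoint from $F$, the line $L$ lies in no $H_m$ (no $(q+1)$-point line fits inside the affine geometry $H_m-F$) and therefore meets each $H_m$ in at most one point; as $|L|=q+1$ and the sets $H_m-F$ partition $PG(r-1,q)-F$, it meets every $H_m$ in exactly one point.

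Finally, put $u_1=L\cap H_1$ and $u_2=L\cap H_2$. By the first step $u_1$ and $u_2$ are green, while $p_3$ and $p_4$ are red; moreover $u_1\neq u_2$ and $p_3\neq p_4$ (since $H_1-F,H_2-F$ are disjoint and $H_3-F,H_4-F$ are disjoint), so $u_1,u_2,p_3,p_4$ are four distinct points of $L$. Thus $|R\cap L|\geq2$, whence $2\leq|G\cap L|\leq q-1$, and since $|G\cap L|\geq 2$ the set $G\cap L$ spans $L$, so $PG(r-1,q)|(G\cap L)$ is isomorphic to $U_{2,|G\cap L|}$ — one of $U_{2,2},\dots,U_{2,q-1}$. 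This contradicts the standing assumption that $G$ has no such induced restriction, so $F$ lies in at most one green hyperplane. The one place demanding care is the first step: the conclusion that $R\cap H_i$ has deficient rank relies on Lemma~\ref{fullrank}, which requires the \emph{target} property of $G\cap H_i$, available precisely because $G$ is a minimal non-target; everything else is elementary incidence geometry in $PG(r-1,q)$.
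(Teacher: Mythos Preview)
Your proof is correct and follows essentially the same approach as the paper's: assume two green hyperplanes through $F$, show their affine parts $H_i-F$ are monochromatic green (via minimality and Lemma~\ref{fullrank}), take red points in two red hyperplanes through $F$ (from \ref{2redhyperplanes}), and observe that the line they span picks up at least two green and two red points, yielding a forbidden $U_{2,k}$. You are simply more explicit than the paper about the incidence details (why $L\cap F=\emptyset$, why $L$ meets each $H_m$ exactly once) and about why Lemma~\ref{fullrank} applies inside $H_i$.
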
 
Let $H_{G_1}$ and $H_{G_2}$ be distinct green hyperplanes containing $F$ and let $H_{R_1}$ and $H_{R_2}$ be distinct red hyperplanes containing $F$. If there is a red point in $H_{G_1} - F$, then $r(G\cap H_{G_1}) = r(R\cap H_{G_1})$, a contradiction. Hence, there are no red points in $H_{G_{1}} - F$ or in $H_{G_2}- F$. Consider red points $x \text{ in } H_{R_1} - F$ and $y \text{ in }  H_{R_2} - F$. The line $\cl_P(\{x,y\})$ intersects each of $H_{G_1}$ and $H_{G_2}$ once at some point not in $F$. Therefore, this line will have at least two red and two green points, a contradiction. Thus \ref{BadLineHP} holds.

Let $G_2$ and $R_2$ be the sets of green and red projective flats of $PG(r-1,q)$ of rank $r-2$, and let $G_1$ and $R_1$ be the sets of green and red projective hyperplanes of $PG(r-1,q)$. We now construct a bipartite graph $B$ with vertex sets $G_2 \cup R_2$ and $G_1 \cup R_1$. A vertex $x \text{ in } G_2\cup R_2$ is adjacent to a vertex $y \text{ in } G_1\cup R_1$ if the flat associated to $x$ is contained in the hyperplane associated to $y$. We count the number of cross edges, $G_1R_2$-edges or $R_1G_2$-edges, of $B$.  By \ref{binaryflat}, no flat in $G_1$ contains two or more flats in $R_2$, so it follows, using symmetry, that the total number of cross edges is at most $|G_1| + |R_1|$. Consider a pair $(H_G,H_R)$, where $H_G \in G_1$ and $H_R \in R_1$. The total number of these pairs is $|G_1||R_1|$. Say $H_G \cap H_R$ is a red flat $F_R$. Then the edge of $B$ from $H_G$ to $F_R$ is a cross edge. When $q = 2$, by \ref{binaryhyperplane}, there is at most one other red hyperplane $H'_{R}$ such that $H'_{R} \cap H_G = F_R$. Therefore, when $q = 2$, the number of cross edges is at least $\frac{1}{2}|G_1||R_1|$. If $q \geq 3$, then, by \ref{BadLineHP}, each cross edge corresponds to exactly $q$ such  pairs, so the number of cross edges is at least $\frac{1}{q}|G_1||R_1|$. Hence, for all $q$, the number of cross edges is at least $\frac{1}{q}|G_1||R_1|$. Thus, 
\begin{align}
    \frac{1}{q}|G_1||R_1| \leq |G_1| + |R_1|. \tag{1}\label{eq1}
\end{align}
We may suppose $|G_1| \leq |R_1|$. Then $\frac{1}{q}|G_1| \leq \frac{|G_1|}{|R_1|} + 1$, so 
\begin{align}
    |G_1| \leq 2q.\tag{2} \label{eq2}
\end{align}

Assume $q = 2$. Then $|G_1| \leq 4$. Now take a basis $B_G$ of $G$. As $r(G) \geq 4$, each $(r(G) -1)$-element subset of $B_G$ spans a green hyperplane. Hence $|G_1| \geq 4$, so $|G_1| = 4$. Then, by (\ref{eq1}), we have $\frac{1}{2}(4)|R_1| \leq 4 + |R_1|$, so $|R_1| \leq 4$. Therefore, $|R_1| = 4$ and $PG(r-1,2)$ has exactly eight hyperplanes. This is a contradiction, as $PG(r-1,2)$ has $2^r-1$ hyperplanes.

Now assume $q \geq 3$. Take a green hyperplane $H$. As $r(G) = r(R) =r$, there is some green point $z $ not in $ H$. Now, $PG(r-1,q)|(G\cap H)$ is a target having, say $(F_0,F_1,\dots,F_{k-1},H)$, as its corresponding canonical nested sequence of projective flats. Let $X$ be a projective flat of rank $r-2$ that is contained in $H$ and contains $F_{k-1}$. Then $PG(r-1,q)|(H - X) \cong AG(r-2,q)$ and all the elements of $H-X$ are green. In $H-X$, there are $\frac{q(q^{r-2}-1)}{q-1}$ green rank-$(r-2)$ affine flats. Let $Z$ be  one of these affine flats. Then $\cl_P(Z\cup z)$ will be a green projective hyperplane. This implies that $|G_1| \geq \frac{q(q^{r-2}-1)}{q-1}$, so, by (\ref{eq2}),
\begin{align}
2q \geq& \frac{q(q^{r-2} - 1)}{q -1}. \nonumber\end{align}
Thus $2q-2 \geq q^{r-2} - 1$, so
\begin{align}2q \geq& q^{r-2} + 1. \nonumber
\end{align}
Observe that, for $r \geq 4$, as $q \geq 3$, the last inequality does not hold. Thus $r(G)\leq 3$.

Assume $r(G) = 3$. Suppose there is a green line $L$ with a red point $z$ on it. Because there is no line having at least two green and at least two red points, $z$ is the only red point on $L$. As $r(G) = r(R) = 3$, there are red points $u$ and $v$ that are not on $L$ such that $r(\{u,v,z\}) = 3$. Then $\cl_P(\{u,v\})$ is a red line $L_1$ that meets $L$ at green a point $p$. Moreover, there is a green point $x$ that is not on $L$ or $L_1$. Consider the line $L_2 = \cl_P(\{x,z\})$. This line intersects $L_1$ at some red point $r_0$, so $L_2$ is a red line whose only green point is $x$. Observe that every other line that passes through $x$ will be a green line, as it must intersect $L$ at a point other than $z$. This implies that every red point in $PG(2,q)$ lies on $L_1$ or $L_2$ or is a single red point on the green line $\cl_P(\{p,x\})$. Therefore, for distinct red points $r_1,r_2 \text{ in } L_1 - \{r_0\}$, one of $\cl_P(\{r_1,z\})$ or $\cl_P(\{r_2,z\})$ will have at least two red points and two green points, a contradiction. Therefore, there cannot be a red point on any green line. By symmetry, there cannot be a green point on any red line. Since every two lines meet, this is a contradiction.\end{proof}

%%%%%%%%%%%%%%%%%%%%%%%%%%%%%%%%%%%%%%%%%%%%%%%%%%
%%%%%%%%%%%%%%%%%%%%%%%%%%%%%%%%%%%%%%%%%%%%%%%%%%%
%%%%%%%%%%%%%%%%%%%%%%%%%%%%%%%%%%%%%%%%%%%%%%%%%

\section{Affine Target Matroids}\label{affinetargets}

In this section, we look at targets arising from affine geometries. This section begins with preliminary results about affine targets and minimal affine-non-targets. It concludes with the forbidden induced restrictions for affine targets over $GF(q)$. One fact that we use repeatedly is that if $(G,R)$ is a $2$-coloring of $AG(r-1,q)$, then $G$ is an affine target if and only if $R$ is an affine target. Viewing $AG(r-1,q)$ as a restriction, $PG(r-1,q)|X$, of $PG(r-1,q)$ obtained by deleting a projective hyperplane $H$ from $PG(r-1,q)$, we call $H$ the \textit{complementary hyperplane of} $X$. We shall also refer to $H$ as the \textit{complementary hyperplane of} $AG(r-1,q)$.

\begin{proposition}
Let $(E_0,E_1,\dots, E_k)$ and $(F_0,F_1, \dots, F_k)$ be nested sequences of flats in $AG(r-1,q)$ such that $r(E_i) = r(F_i)$ for all $i$ in $\{0,1,\dots k\}$. Let $H$ and $H'$ be the complementary hyperplanes of $E_k$ and $F_k$, respectively. Let $G_E$ and $G_F$ be the union, respectively, of all $E_{i+1}-E_i$ and of all $F_{i+1}-F_i$ for the even numbers $i$ in $\{0,1,\dots,k\}$. Then $AG(r-1,q)|G_E \cong AG(r-1,q)|G_F$.
\end{proposition}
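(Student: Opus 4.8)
The plan is to reduce this affine statement to the projective one, Proposition~\ref{projwelldefined}, by lifting the affine nested sequences to projective ones and handling the complementary hyperplanes carefully. First I would pass to the ambient $PG(r-1,q)$, writing $AG(r-1,q) = PG(r-1,q) \setminus H$ where $H$ is the complementary hyperplane of $E_k$, and $AG(r-1,q) = PG(r-1,q) \setminus H'$ for $F_k$. Each affine flat $E_i$ is the intersection with $AG(r-1,q)$ of its projective closure $\overline{E_i} = \cl_P(E_i)$, and since $E_i$ is affine of rank $r(E_i)$, the projective flat $\overline{E_i}$ also has rank $r(E_i)$ and is not contained in $H$. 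Thus $(\overline{E_0}, \overline{E_1}, \dots, \overline{E_k})$ is a nested sequence of projective flats in $PG(r-1,q)$ with the same rank sequence, except that $\overline{E_k} = PG(r-1,q)$ while in the affine picture we have removed $H$; similarly for $(\overline{F_0}, \dots, \overline{F_k})$.

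Next I would produce an automorphism of $PG(r-1,q)$ carrying one configuration to the other. The key point is that I need an automorphism $\phi$ of $PG(r-1,q)$ with $\phi(\overline{E_i}) = \overline{F_i}$ for all $i$ \emph{and} $\phi(H) = H'$ simultaneously. To get this, I would mimic the basis-extension construction in the proof of Proposition~\ref{projwelldefined}: build nested bases $B_0 \subseteq B_1 \subseteq \cdots \subseteq B_k$ adapted to the $\overline{E_i}$, but now also extend $B_k$ (a basis of $PG(r-1,q)$, since $\overline{E_k}$ is everything) to record $H$; more precisely, since $H$ is a hyperplane not containing $\overline{E_k}$... wait, $\overline{E_k}$ is all of $PG(r-1,q)$, so instead I should think of $H$ as a hyperplane and note that $H \cap AG(r-1,q) = \emptyset$. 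The cleaner route: choose a basis $B_k$ of $PG(r-1,q)$ extending the adapted basis $B_{k-1}$ of $\overline{E_{k-1}}$, and arrange that $H = \cl_P(B_k \setminus \{b\})$ for one chosen basis element $b \notin H$ — this is possible because $H$ together with any point off $H$ spans, and we can pick $b$ off $H$ and a basis of $H$ refining things appropriately. Do the analogous construction on the $F$-side to get $D_0 \subseteq \cdots \subseteq D_k$ with $H' = \cl_P(D_k \setminus \{d\})$, where $d$ is the image of $b$. Because $r(E_i) = r(F_i)$, the combinatorial data (sizes of the successive basis increments) match, so the bijection $b_{s,t} \mapsto d_{s,t}$ extends to an automorphism $\phi$ of $PG(r-1,q)$ with $\phi(\overline{E_i}) = \overline{F_i}$ and $\phi(H) = H'$.

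Then $\phi$ restricts to an isomorphism $AG(r-1,q) = PG(r-1,q)\setminus H \to PG(r-1,q) \setminus H' = AG(r-1,q)$ of affine geometries, and since $\phi(\overline{E_i}) = \overline{F_i}$, intersecting with the affine part gives $\phi(E_i) = F_i$, hence $\phi(E_{i+1} - E_i) = F_{i+1} - F_i$ for every $i$, and in particular $\phi(G_E) = G_F$. Therefore $AG(r-1,q)|G_E \cong AG(r-1,q)|G_F$.

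The main obstacle is the bookkeeping needed to choose the adapted bases so that a single basis element simultaneously ``witnesses'' the complementary hyperplane; one must check that refining a basis of $H$ through the affine flats $\overline{E_i} \cap H$ is consistent with the nested-basis construction — but since each $\overline{E_i}$ is \emph{not} contained in $H$, the ranks $r(\overline{E_i} \cap H) = r(E_i) - 1$ behave predictably, so the two extension processes (through the $\overline{E_i}$ and through $H$) can be carried out compatibly. Once that is set up, the rest is exactly the automorphism argument of Proposition~\ref{projwelldefined}, so I would keep the write-up short by citing that proof for the routine parts.
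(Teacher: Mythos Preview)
Your approach is correct and is essentially the paper's own: lift to $PG(r-1,q)$, build compatible nested bases so that a single automorphism carries each $\cl_P(E_i)$ to $\cl_P(F_i)$ and $H$ to $H'$, then restrict. The paper executes this more directly by building, from the outset, nested bases of the hyperplane slices $\cl_P(E_i)\cap H$ (which have rank $r(E_i)-1$) and adjoining a \emph{single} point $v\in E_h$ off $H$; this avoids the back-and-forth you go through before arriving at the same idea in your final paragraph.
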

\begin{proof}

        Observe that $E_k = E(PG(r-1,q)) - H$ and $F_k = E(PG(r-1,q)) - H'$. Let $h$ be the smallest $i$ such that $r(E_i) > 0$. Let $\{b_{h,1},b_{h,2},\dots,b_{h,m_h}\}$ and $\{d_{h,1},d_{h,2},\dots,d_{h,m_h}\}$ be bases $B_h$ and $D_h$ of $PG(r-1,q)|(\cl_P(E_h) - E_h)$ and $PG(r-1,q)|(\cl_P(F_h) - F_h)$, respectively. Let $v$ and $v'$ be elements in $E_h$ and $F_h$, respectively. Then $\{v, b_{h,1},b_{h,2},\dots,b_{h,m_h}\}$ is a basis for $PG(r-1,q)|\cl_P(E_h)$ and $\{v',d_{h,1},d_{h,2},\dots,d_{h,m_h}\}$ is a basis for $PG(r-1,q)|\cl_P(F_h)$.  Let $B_0 = B_1 = \dots = B_{h-1} = \emptyset$ and $D_0 = D_1 = \dots = D_{h-1} = \emptyset$. For $j \geq h$, assume that $B_0, B_1,\dots, B_j$ and $D_0,D_1,\dots,D_j$ have been defined. Let $B_{j+1}$ and $D_{j+1}$ be bases of $PG(r-1,q)|(\cl_P(E_{j+1}) - E_{j+1})$ and $PG(r-1,q)|(\cl_P(F_{j+1}) - F_{j+1})$, respectively, such that $B_j \subseteq B_{j+1}$ and $D_j \subseteq D_{j+1}$. Observe that adding $v$ and $v'$ to $B_{j+1}$ and $D_{j+1}$, respectively, gives bases for $PG(r-1,q)|\cl_P(E_{j+1})$ and $PG(r-1,q)|\cl_P(F_{j+1})$ for all $j$. Let $B_{j+1} - B_j = \{b_{j+1,1},b_{j+1,2},\dots,b_{j+1,m_{j+1}}\}$ and $D_{j+1} - D_j = \{d_{j+1,1},d_{j+1,2},\dots,d_{j+1,m_{j+1}}\}$. Observe that $B_k$ and $D_k$ are bases for $H$ and $H'$, respectively. Now, $G_E = \cl_P(G_E) - H$ and $G_F = \cl_P(G_F) - H'$. Define the automorphism $\phi$ on $PG(r-1,q)$ by $\phi(v) = v'$ and $\phi(b_{s,t}) = d_{s,t}$, for all $s$ and $t$ such that $s \geq h$. Then $\phi(H) = H'$ and, for all $i$, we have $\phi(\cl_P(B_i)) = \cl_P(D_i)$, so $\phi(\cl_P(B_{i+1}) - \cl_P(B_i) - H) = \phi(\cl_P(B_{i+1})) - \phi(\cl_P(B_i)) - \phi(H) = \cl_P(D_{i+1}) - \cl_P(D_i) - H'$. Thus, $PG(r-1,q)|(\cl_P(G_E) - H) \cong PG(r-1,q)|(\cl_P(G_F) -H')$. Therefore, $AG(r-1,q)|G_E \cong AG(r-1,q)|G_F$.
\end{proof}

Similar to projective targets, the previous result means that we can refer to a simple $GF(q)$-representable affine matroid $M$ as being an affine target when all the $GF(q)$-representations of $M$ are affine targets.

\begin{proposition}
The class of affine targets is closed under induced restrictions.
\end{proposition}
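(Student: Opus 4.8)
The plan is to mirror the proof of Lemma~\ref{targetinudcedres} for projective targets, adapting it to the affine setting via the complementary-hyperplane description. Let $(G,R)$ be a $2$-coloring of $AG(r-1,q)$ with $G$ an affine target, and let $F$ be a flat of $AG(r-1,q)|G$; we must show $AG(r-1,q)|(G\cap F)$ is again an affine target. Since $G$ is an affine target, there is a nested sequence $(F_0,F_1,\dots,F_k)$ of affine flats with $G=\bigcup_{i\text{ even}}(F_{i+1}-F_i)$, and, writing $H$ for the complementary hyperplane, we may regard everything inside $PG(r-1,q)$ with $F_k = E(PG(r-1,q))-H$ and each $\cl_P(F_i)$ a projective flat meeting $AG(r-1,q)$ in $F_i$.

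First I would take the projective closure $F' = \cl_P(F)$ of $F$ in $PG(r-1,q)$; since $F$ is a flat of $AG(r-1,q)$, we have $F = F'-H$ (or $F = F'$ if $F'\subseteq E(PG(r-1,q))-H$, but as $F\subseteq AG(r-1,q)$ and $F$ is closed in the affine geometry, $F'\cap H$ is exactly the "hyperplane at infinity" of the affine flat $F$, so $F = F'-H$ with the understanding that $F'\cap H$ may be empty only when $r(F)=0$). The key point is that $F'\cap H$ is a projective flat of $H$, so $AG(r-1,q)|F$ is itself an affine geometry $AG(r(F)-1,q)$ whose complementary hyperplane is $F'\cap H$. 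Next I would intersect the given nested sequence with $F'$: set $F_i'' = \cl_P(F_i)\cap F'$ for each $i$. Each $F_i''$ is a projective flat contained in $F'$, these form a nested sequence, and $\cl_P(F_i)\cap F'\cap H = F_i''\cap H$ shows that, after deleting $H$, the sequence $(F_i''-H)$ is a nested sequence of affine flats of $AG(r-1,q)|F$. The remaining task is the bookkeeping identity: one checks that $(G\cap F) = \bigcup_{i\text{ even}}\big((F_{i+1}''-H) - (F_i''-H)\big)$, because an element $x\in F$ lies in $F_{i+1}-F_i$ for exactly one $i$, and $x\in F'$ forces $x\in \cl_P(F_{i+1})\cap F' = F_{i+1}''$ while $x\notin \cl_P(F_i)\cap F' = F_i''$, and $x\notin H$. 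Discarding repeated or empty flats yields a valid canonical nested sequence, so $AG(r-1,q)|(G\cap F)$ is an affine target.

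The main obstacle I anticipate is purely organizational rather than conceptual: handling the degenerate cases where $F'\cap H = \emptyset$ (i.e.\ $F$ is a single point or empty) and where some of the intersected flats $F_i''$ coincide, so that the parity of the indices in the reduced sequence is preserved. One must be careful that deleting repeated flats from $(F_0'',F_1'',\dots,F_k'')$ does not change which "bands" $F_{i+1}''-F_i''$ are green; since a repeated flat contributes an empty band, this is automatic once one passes to the canonical nested sequence, but it deserves an explicit sentence. Everything else — that projective closure of an affine flat meets $H$ in a projective flat, that the intersection of a nested sequence with a flat is a nested sequence, and that the green set restricts correctly — is routine, exactly as in Nelson and Nomoto's Lemma~2.15 and the already-proved projective version Lemma~\ref{targetinudcedres}.
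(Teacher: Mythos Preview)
Your proposal is correct in substance, but it takes a needlessly circuitous route. The paper's proof is a two-line argument that stays entirely inside $AG(r-1,q)$: since the intersection of two affine flats is an affine flat, for any flat $X$ of $AG(r-1,q)$ the sequence $(X\cap F_0, X\cap F_1,\dots, X\cap F_k)$ is already a nested sequence of affine flats, and one checks directly that $G\cap X=\bigcup_{i\text{ even}}\big((X\cap F_{i+1})-(X\cap F_i)\big)$. Your detour through the projective closure is harmless but redundant: once you compute $F_i''-H=(\cl_P(F_i)\cap F')-H$, you find it equals $F_i\cap(F'-H)$, which is precisely the affine intersection the paper uses. So the projective machinery buys nothing here; the whole point of working with affine flats is that they are already closed under intersection.

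One small slip to fix: you start by taking $F$ to be a flat of $AG(r-1,q)|G$, but then write ``since $F$ is a flat of $AG(r-1,q)$, we have $F=F'-H$''. These are not the same thing; a flat of $G$ need not be a flat of the ambient affine geometry. The clean fix (which the paper uses) is to take $X$ to be a flat of $AG(r-1,q)$ from the outset, since every induced restriction of $G$ has the form $G\cap X$ for such an $X$.
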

\begin{proof}
Let $(G,R)$ be a $2$-coloring of $AG(r-1,q)$. Assume that $G$ is an affine target. Then $G$ corresponds to a nested sequence $(F_0,F_1,\dots,F_k)$ of affine flats with $G$ being the union of the sets $F_{i+1}-F_i$ for all even $i$. Take a proper flat $X$ of $AG(r-1,q)$. As the intersection of two affine flats is an affine flat, the sequence $(X\cap F_0,X\cap F_1,\dots,X\cap F_k)$ is a nested sequence of affine flats. Assume that $n$ is odd. As $F_n - F_{n-1}\subseteq G$, it follows that $(X \cap F_n ) - (X \cap F_{n-1}) \subseteq G \cap F$. Hence, $G\cap F$ is the union of the sets $(X\cap F_{i+1}) - (X\cap F_i)$ for all even $i$. Therefore, $AG(r-1,q)|(G\cap X)$ is an affine target. 
\end{proof}

We will use the following well-known lemmas about affine geometries quite often in this section (see, for example [\ref{James}, Exercise 6.2.2]).
\begin{lemma} \label{affinepartition}
$AG(r-1,q)$ can be partitioned into $q$ hyperplanes. 
\end{lemma}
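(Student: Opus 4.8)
The plan is to exhibit an explicit partition of $AG(r-1,q)$ into $q$ parallel hyperplanes, using the standard coordinate model of the affine geometry. First I would recall that $AG(r-1,q)$ is obtained from $PG(r-1,q)$ by deleting a projective hyperplane $H$, the hyperplane at infinity. Concretely, represent the points of $PG(r-1,q)$ by the nonzero vectors of $\F_q^r$ up to scalar, with $H$ the set of points whose last coordinate is $0$; then the points of $AG(r-1,q)$ are represented by vectors whose last coordinate is nonzero, and after scaling we may take every affine point to have last coordinate equal to $1$.

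The key step is then to slice by the value of the first coordinate. For each $a \in \F_q$, let $P_a$ be the set of affine points whose representative vector $(x_1,\dots,x_{r-1},1)$ has $x_1 = a$. Each $P_a$ is the set of affine points lying in the projective flat defined by the linear equation $x_1 = a\,x_r$, which is a projective hyperplane; intersecting this hyperplane with $AG(r-1,q)$ (i.e.\ removing its intersection with $H$) gives an affine hyperplane, hence a flat of $AG(r-1,q)$ of rank $r-1$. The sets $P_a$ for $a\in\F_q$ are pairwise disjoint since a vector has a single first coordinate, and their union is all of $AG(r-1,q)$ since every affine point has some first coordinate. There are exactly $q$ values of $a$, so this is a partition into $q$ hyperplanes. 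Finally, one should note that each $P_a$ is nonempty (for $r \geq 2$), so these really are $q$ distinct hyperplanes; equivalently, one can phrase the whole argument via the family of $q+1$ projective hyperplanes through a fixed rank-$(r-2)$ projective flat contained in $H$, exactly one of which is $H$ itself, leaving $q$ affine hyperplanes that partition $AG(r-1,q)$.

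I do not anticipate a serious obstacle here, as the statement is elementary and standard; the only thing to be careful about is the bookkeeping between the projective and affine pictures — making sure that removing $H$ turns the projective hyperplanes $\{x_1 = a x_r\}$ into genuine affine flats and that the degenerate value of $a$ is not lost. If one prefers a coordinate-free argument, one can instead invoke the transitive action of the translation group of $AG(r-1,q)$ on its points together with the fact that the cosets of any fixed rank-$(r-1)$ affine subspace partition the point set into $q$ translates, each a hyperplane; this is really the same proof phrased in the language of [\ref{James}, Exercise 6.2.2].
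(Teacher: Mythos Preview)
Your argument is correct and complete. Note, however, that the paper does not actually supply a proof of this lemma: it is stated as a well-known fact about affine geometries with a reference to [\ref{James}, Exercise~6.2.2], so there is no ``paper's own proof'' to compare against. Your coordinate argument (slicing $AG(r-1,q)$ by the value of one coordinate) and your alternative coordinate-free version (cosets of a fixed affine hyperplane, or the pencil of projective hyperplanes through a rank-$(r-2)$ flat in $H$) are both standard ways to justify the statement and are exactly what the cited exercise is pointing at.
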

\begin{lemma}\label{affinehyperplaneint}
    Let $X$ and $Y$ be distinct hyperplanes of $AG(r-1,q)$. Then either $r(X\cap Y) = 0$, or $r(X\cap Y) = r - 2$.
\end{lemma}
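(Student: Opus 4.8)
The plan is to argue inside the projective model. Write $AG(r-1,q) = PG(r-1,q)|X_0$, where $X_0 = E(PG(r-1,q)) - H_\infty$ for the complementary hyperplane $H_\infty$; we may assume $r \geq 2$, since otherwise $AG(r-1,q)$ has no two distinct hyperplanes. A hyperplane $X$ of $AG(r-1,q)$ has rank $r-1$, so its projective closure $\overline{X} := \cl_P(X)$ is a rank-$(r-1)$ projective flat, hence a projective hyperplane, and $\overline{X} \neq H_\infty$ because the nonempty set $X$ is disjoint from $H_\infty$. First I would check the identity $X = \overline{X} - H_\infty = \overline{X} \cap X_0$: the set $\overline{X} \cap X_0$ is a flat of $AG(r-1,q)$ containing $X$, and since $\overline{X} \cap H_\infty$ is a proper subflat of $\overline{X}$, removing it from $\overline{X}$ leaves a set spanning $\overline{X}$, so $r(\overline{X} \cap X_0) = r-1 = r(X)$, forcing $X = \overline{X} \cap X_0$. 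The same applies to $Y$, giving $Y = \overline{Y} \cap X_0$ with $\overline{Y}$ a projective hyperplane; and $\overline{X} \neq \overline{Y}$ since $X \neq Y$.

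Then $X \cap Y = (\overline{X} \cap \overline{Y}) \cap X_0$. Because the lattice of flats of $PG(r-1,q)$ is modular, the two distinct projective hyperplanes $\overline{X}, \overline{Y}$ have $r(\overline{X} \cap \overline{Y}) = (r-1)+(r-1)-r = r-2$; call this flat $F$. Now split on whether $F \subseteq H_\infty$. If $F \subseteq H_\infty$, then $X \cap Y = F \cap X_0 = F - H_\infty = \emptyset$, so $r(X\cap Y) = 0$. If $F \not\subseteq H_\infty$, then modularity again gives $r(F \cap H_\infty) = (r-2)+(r-1)-r = r-3$, so $F \cap H_\infty$ is a proper subflat of $F$; removing a proper subflat from a projective flat leaves a spanning set, so $r(X\cap Y) = r_P(F - H_\infty) = r_P(F) = r-2$. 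Since the rank function of the restriction $AG(r-1,q)$ agrees with that of $PG(r-1,q)$ on subsets of $X_0$, this computes $r_{AG}(X\cap Y)$, and the lemma follows.

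The only nontrivial ingredients are standard facts about projective geometries: the lattice of flats of $PG(r-1,q)$ is modular, so any two projective flats satisfy $r(A)+r(B) = r(A\cap B) + r(\cl_P(A\cup B))$; and the complement of a proper flat of a projective geometry is spanning, because any proper flat lies in some projective hyperplane $H'$ and $PG(r-1,q) - H' \cong AG(r-1,q)$ has full rank. I do not expect a genuine obstacle here — the one place warranting care is the bookkeeping when passing between the affine and projective pictures, in particular the identity $X = \cl_P(X) - H_\infty$ for an affine hyperplane $X$ and the two rank computations via modularity. If that route became unwieldy, the fallback is purely computational: coordinatize $AG(r-1,q)$ as $\F_q^{\,r-1}$, write $X$ and $Y$ as solution sets of single affine-linear equations, and note that the two coefficient vectors are either proportional (giving $X = Y$ or $X \cap Y = \emptyset$) or independent (giving $r(X\cap Y) = r-2$).
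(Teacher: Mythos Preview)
Your argument is correct. The paper does not actually prove this lemma: it is stated as a well-known fact about affine geometries with a pointer to Exercise~6.2.2 of Oxley's book, so there is no ``paper's proof'' to compare against. Your route through the projective model --- taking projective closures $\overline{X},\overline{Y}$, using modularity of the lattice of flats of $PG(r-1,q)$ to get $r(\overline{X}\cap\overline{Y})=r-2$, and then splitting on whether this intersection lies in $H_\infty$ --- is exactly the standard way to establish such facts, and every step checks out. The bookkeeping you flagged (the identity $X=\cl_P(X)-H_\infty$ and the two modularity computations) is handled correctly; in particular, your justification that the complement of a proper subflat of a projective flat is spanning applies inside $\overline{X}$ and inside $F$ because each is itself a smaller projective geometry. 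Your coordinate-geometry fallback would also work and is arguably the quicker way to see it.
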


The techniques used for handling affine targets are similar to those that we used for projective targets. The binary case will be treated separately.

\begin{lemma}\label{new}
    Let $(G,R)$ be a $2$-coloring of $AG(r-1,2)$ with $|G| = |R|$. Then $r(G) = r(R)$.
\end{lemma}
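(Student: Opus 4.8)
The plan is to count, using the structure of $AG(r-1,2)$ as a deleted hyperplane of $PG(r-1,2)$. Recall that $AG(r-1,2) = PG(r-1,2) \setminus H_0$ for a projective hyperplane $H_0$, so $|E(AG(r-1,2))| = 2^{r-1}$; the hypothesis $|G| = |R|$ then says $|G| = |R| = 2^{r-2}$, which already forces $r \geq 2$. Suppose for contradiction that $r(G) \neq r(R)$, say $r(G) < r(R)$. Then $r(R) = r$: indeed, $r(G) \leq r-1$, so $G$ lies in an affine hyperplane $X$ of $AG(r-1,2)$; by Lemma~\ref{affinepartition} the complementary set $E(AG(r-1,2)) \setminus X$ can be covered by one further affine hyperplane (since $q = 2$ gives a partition into exactly two hyperplanes), hence $R \supseteq E(AG(r-1,2)) \setminus X$ spans that hyperplane, and $|R| = |E(AG)\setminus X| = 2^{r-2}$ forces equality $R = E(AG)\setminus X$, so $X$ and its complement are the two halves of the partition. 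That makes both $G$ and $R$ contained in (distinct, complementary) affine hyperplanes, so $r(G) \leq r-1$ and $r(R) \leq r-1$, contradicting $r(R) = r$ unless... — here I need to be more careful.

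Let me restructure. The cleaner approach: assume $r(R) < r$; I will derive that $|R| < |G|$, contradicting $|G| = |R|$. If $r(R) = s < r$, then $R$ spans an affine flat of rank $s$, which as a matroid is (a restriction of) $AG(s-1,2)$, so $|R| \leq 2^{s-1} \leq 2^{r-2}$, with the last inequality strict when $s < r-1$. So either $s = r-1$, or $|R| < 2^{r-2} = |G|$, the latter being a contradiction. Hence $r(R) = r-1$; by the symmetric argument (using $r(G) \leq r(R) = r-1$ and $|G| = 2^{r-2}$) we get $r(G) = r-1$ as well, so in fact $r(G) = r(R)$ and we are done — the only way to have $r(G) \neq r(R)$ with both at most $r-1$ and equal sizes $2^{r-2}$ is ruled out because each of rank $\leq r-2$ would have at most $2^{r-2}$ elements with strict inequality forced when the rank drops below $r-1$. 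Wait: if $r(G) = r-2$ then $|G| \leq 2^{r-2}$, and equality $|G| = 2^{r-2}$ is possible (take all of an $AG(r-3,2)$-flat — no, that has $2^{r-3}$ elements). Actually an affine flat of rank $r-2$ in $AG(r-1,2)$ has exactly $2^{r-2}$ points, so $|G| = 2^{r-2}$ with $r(G) = r-2$ \emph{is} attainable: $G$ is exactly a full affine hyperplane. But then $R$ is its complement, also a full affine hyperplane, with $r(R) = r-2$ too. So $r(G) = r(R)$ after all.

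So the argument is: WLOG $r(G) \le r(R)$. The set $R$ spans some affine flat $F_R$ of rank $r(R)$, and an affine flat of rank $t$ in $AG(r-1,2)$ has exactly $2^{t-1}$ points (or is empty, with $t=0$); since $R \subseteq F_R$ we get $2^{r-2} = |R| \leq 2^{r(R)-1}$, hence $r(R) \geq r-1$. Similarly $G$ lies in an affine flat of rank $r(G)$, so $2^{r-2} = |G| \leq 2^{r(G)-1}$, giving $r(G) \geq r-1$. Combined with $r(G) \leq r(R) \leq r-1$ (the rank cannot exceed $r-1$ since a proper subset of $AG(r-1,2)$ — each of $G, R$ is proper as the other is nonempty — wait, could $r(G)$ or $r(R)$ equal $r$? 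A spanning proper subset of $AG(r-1,2)$ has rank $r$; that's fine, rank $r$ is allowed) — hmm, so I should not claim $r \le r-1$. Let me just say: $r(G) \geq r-1$ and $r(R) \geq r-1$; and if $r(G) = r$ then $G$ spans, but $|G| = 2^{r-2} < 2^{r-1} = $ size of a basis... no, a basis has $r$ elements and $2^{r-2} \geq r$ for $r \geq 2$ with equality issues only for small $r$ — a spanning set can certainly have $2^{r-2}$ elements. So both $r(G)$ and $r(R)$ lie in $\{r-1, r\}$.

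\textbf{The remaining case.} What rules out, say, $r(G) = r-1$ and $r(R) = r$? If $r(R) = r$ then (since $G$ is an affine target iff $R$ is, and we may apply the target hypothesis symmetrically — but wait, we are \emph{not} assuming $G$ is a target here; Lemma~\ref{new} has no target hypothesis). So this is purely a counting lemma. Here is the finish: if $r(G) = r - 1$, then $G$ is contained in an affine hyperplane $X$ with $|X| = 2^{r-2} = |G|$, so $G = X$ exactly, and hence $R = E(AG(r-1,2)) \setminus X$. By Lemma~\ref{affinepartition} with $q = 2$, $AG(r-1,2)$ partitions into two affine hyperplanes, and $X$ together with its complement is such a partition (any affine hyperplane's complement is the parallel hyperplane), so $R = E(AG) \setminus X$ is itself an affine hyperplane, giving $r(R) = r-1 = r(G)$. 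This is the main obstacle and the place to be careful: one must confirm that the complement of an affine hyperplane in $AG(r-1,2)$ is exactly the parallel affine hyperplane, which is precisely the content of the $q=2$ case of Lemma~\ref{affinepartition} (a partition into $q = 2$ hyperplanes, necessarily parallel). Once that is in hand, in every surviving case $r(G) = r(R)$, completing the proof. The symmetric subcase $r(R) = r-1$ is identical. The only case with $r(G) \neq r(R)$ would need one of them to have rank $r$; but rank $r$ forces the other to have rank $\leq r-1$ hence (by the above) to equal an affine hyperplane, hence to have complement an affine hyperplane of rank $r-1$, contradicting the rank-$r$ assumption on that complement. Hence $r(G) = r(R)$.
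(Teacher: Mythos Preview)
Your final argument is correct and matches the paper's exactly: both observe that $|G| = |R| = 2^{r-2}$ forces $r(G), r(R) \in \{r-1, r\}$ (since a rank-$t$ affine flat has only $2^{t-1}$ points), and that if either set has rank $r-1$ it must equal a full affine hyperplane, so its complement---the other colour class---is the parallel hyperplane, also of rank $r-1$. (One sentence in your discarded middle attempt misstates the size of a rank-$(r-2)$ flat as $2^{r-2}$ and calls it a hyperplane of rank $r-2$; you correct this immediately afterward with the formula $2^{t-1}$, and the slip does not propagate into the final argument.)
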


\begin{proof}
 Since $|G| = |R|$, we have that $|G| = 2^{r-2}$. Because the hyperplanes of $AG(r-1,2)$ have exactly $2^{r-2}$ elements, either $AG(r-1,2)|G$ is a hyperplane, or $r(G) = r$. Since $AG(r-1,2)|G$ is a hyperplane if and only if $AG(r-1,2)|R$ is a hyperplane, the lemma~follows.
\end{proof}

\begin{lemma}\label{BinaryAffineRank}
Let $(G,R)$ be a $2$-coloring of $AG(r-1,2)$. Assume $G$ is an affine target and $F$ is a flat of $AG(r-1,2)$. Then either exactly one of $G\cap F$ and $R\cap F$ is of rank $r(F)$; or $r(G\cap F) = r(R \cap F) = r(F)-1$, and each of $G \cap F$ and $R\cap F$ is an affine flat. Moreover, if $r(G \cap F) = r(F)$ and $H_1$ and $H_2$ are disjoint hyperplanes of $AG(r-1,2)|F$, then $r(G \cap H_1) = r(F) - 1$ or $r(G \cap H_2) = r(F) - 1$.
\end{lemma}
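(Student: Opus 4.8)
\textbf{Proof proposal for Lemma~\ref{BinaryAffineRank}.}

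The plan is to reduce everything to the projective world, where Lemma~\ref{fullrank} is already available. Since $G$ is an affine target in $AG(r-1,2)$, I view $AG(r-1,2)$ as $PG(r-1,2)$ with a complementary hyperplane $H_\infty$ deleted, and I extend the $2$-coloring $(G,R)$ of $AG(r-1,2)$ to a $2$-coloring of $PG(r-1,2)$ by coloring the points of $H_\infty$ according to the nested sequence of projective flats that witnesses that $G$ is an affine target; concretely, the affine nested sequence $(F_0,\dots,F_k)$ comes (by the definition of affine target and the discussion of complementary hyperplanes) from a projective nested sequence $(\cl_P(F_0),\dots,\cl_P(F_k))$ with $\cl_P(F_k)=E(PG(r-1,2))$, and this projective sequence 2-colors all of $PG(r-1,2)$ making $\widehat{G}:=\cl_P(G)\setminus H_\infty\cup(\text{green part of }H_\infty)$ a genuine projective target whose restriction to $AG(r-1,2)$ is $G$. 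Now let $F$ be a flat of $AG(r-1,2)$ and set $\widehat F=\cl_P(F)$, a projective flat with $r(\widehat F)=r(F)$. By Lemma~\ref{fullrank} applied in $PG(r-1,2)$, exactly one of $\widehat G\cap\widehat F$ and $\widehat R\cap\widehat F$ has rank $r(F)$; say $\widehat G\cap\widehat F$ does.

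For the first assertion I must transfer this back to $F$. The flat $\widehat F$ is the disjoint union of $F$ and $\widehat F\cap H_\infty$, and $\widehat F\cap H_\infty$ is a projective hyperplane of $\widehat F$ (this uses $r(\widehat F)=r(F)$ and $F=\widehat F\setminus H_\infty$). Since $\widehat G\cap\widehat F$ spans $\widehat F$, the green points of $\widehat F$ are not all contained in the single hyperplane $\widehat F\cap H_\infty$, so there is a green point of $\widehat F$ in $F$; more to the point, I argue on ranks: $r(G\cap F)\ge r(\widehat G\cap\widehat F)-1=r(F)-1$, and likewise $r(R\cap F)\ge r(\widehat R\cap\widehat F)-1$. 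If $r(G\cap F)=r(F)$ we are in the first case provided $r(R\cap F)<r(F)$, which I get from the fact that, inside $\widehat F$, the red points together with any green spanning set cannot both span unless they interact through $H_\infty$; here it is cleanest to invoke Lemma~\ref{new}-style counting or simply note that if $r(G\cap F)=r(R\cap F)=r(F)$ then $G\cap F$ and $R\cap F$ are disjoint rank-$r(F)$ restrictions of $AG(r(F)-1,2)$, which by Lemma~\ref{new} forces $|G\cap F|=|R\cap F|$ and then forces each to be an affine flat of rank $r(F)-1$ — contradicting rank $r(F)$. So the only way to fail ``exactly one has rank $r(F)$'' is $r(G\cap F)=r(R\cap F)=r(F)-1$, and in that case Lemma~\ref{new} (applied to the 2-coloring of the affine geometry $AG(r(F)-1,2)=AG(r-1,2)|F$) gives $|G\cap F|=|R\cap F|=2^{r(F)-2}$, whence each of $G\cap F$ and $R\cap F$ is a hyperplane of $AG(r-1,2)|F$, i.e. an affine flat; this is exactly the second alternative.

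For the ``moreover'' clause, assume $r(G\cap F)=r(F)$ and let $H_1,H_2$ be disjoint hyperplanes of $AG(r-1,2)|F$, so $F=H_1\sqcup H_2$ and $r(H_1)=r(H_2)=r(F)-1$. Each of $G\cap H_1$ and $G\cap H_2$ has rank at least $r(F)-2$ and at most $r(F)-1$. If both had rank exactly $r(F)-2$, then $G\cap H_i$ would lie in a proper flat of $H_i$ for $i=1,2$; but $G\cap F=(G\cap H_1)\cup(G\cap H_2)$ and the span of two flats, one of rank $\le r(F)-2$ in $H_1$ and one of rank $\le r(F)-2$ in $H_2$, has rank at most $(r(F)-2)+1=r(F)-1<r(F)$, contradicting $r(G\cap F)=r(F)$. (The ``$+1$'' is because $H_1$ and $H_2$ together span only $F$, of rank $r(F)$, so adding $H_2$'s green span to $H_1$'s green span, the second contributes at most the one extra dimension by which $F$ exceeds $H_1$, provided that green span already misses a hyperplane of $H_2$.) Hence $r(G\cap H_1)=r(F)-1$ or $r(G\cap H_2)=r(F)-1$, as claimed.

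The main obstacle I anticipate is the bookkeeping in the ``moreover'' clause: the naive rank inequality $r(X\cup Y)\le r(X)+r(Y)$ is far too weak here, and I really need to exploit that $H_1$ and $H_2$ are \emph{complementary} hyperplanes of $F$ (their union is all of $F$, their intersection is empty) so that a rank-$(r(F)-2)$ subset of $H_1$ together with a rank-$(r(F)-2)$ subset of $H_2$ spans something of rank at most $r(F)-1$. Making that last step airtight — e.g. by choosing coordinates so that $H_1=\{x_{r}=0\}$ and $H_2=\{x_r=1\}$ within $F$ and checking that two codimension-$\ge 1$ affine subspaces, one in each, jointly have affine rank $\le r(F)-1$ — is the one place the argument needs genuine care rather than routine citation.
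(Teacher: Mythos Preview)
Your proposal has a genuine gap in the ``moreover'' clause, and the first part also misuses Lemma~\ref{new}.

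\textbf{The ``moreover'' clause.} Your central claim is that if $A\subseteq H_1$ and $B\subseteq H_2$ each have rank at most $r(F)-2$, then $r(A\cup B)\le r(F)-1$. This is false. Take $F=AG(3,2)$ (so $r(F)=4$) with $H_1=\{x:x_1=0\}$ and $H_2=\{x:x_1=1\}$, and set $A=\{000,001\}$, $B=\{100,110\}$. Then $r(A)=r(B)=2=r(F)-2$, but $000+001+100+110=011\neq 0$, so the four points are affinely independent and $r(A\cup B)=4=r(F)$. Your parenthetical justification (``the second contributes at most the one extra dimension by which $F$ exceeds $H_1$'') conflates adding $B$ to $H_1$ with adding $B$ to $A$: it is true that $r(H_1\cup B)\le r(H_1)+1$, but $r(A\cup B)$ can exceed $r(A)+1$ by a lot. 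The coordinate argument you propose at the end cannot rescue this, because the inequality you want is simply not true for arbitrary $A,B$; the target hypothesis on $G$ is essential, and your argument never invokes it here.

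\textbf{The first part.} You cite Lemma~\ref{new} for its converse twice: once to deduce $|G\cap F|=|R\cap F|$ from $r(G\cap F)=r(R\cap F)=r(F)$, and once from $r(G\cap F)=r(R\cap F)=r(F)-1$. Lemma~\ref{new} only says $|G|=|R|\Rightarrow r(G)=r(R)$, not the reverse. Also, the inequality $r(G\cap F)\ge r(\widehat G\cap\widehat F)-1$ is not a general fact about deleting a hyperplane from a spanning set and would itself need the target structure to justify.

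\textbf{How the paper proceeds.} The paper avoids the projective detour entirely. For the first part it argues directly: if $r(G\cap F)<r(F)$, then $G\cap F$ lies in a hyperplane $H_G$ of $F$, the complementary hyperplane $H_R=F\setminus H_G$ is entirely red, and one checks the two cases according to whether $H_G$ contains a red point. For the ``moreover'' clause it \emph{uses the target structure}: since $G\cap F$ is a full-rank affine target in $F$, there is a monochromatic green hyperplane $H'$ of $F$; this $H'$ meets each $H_i$ in a rank-$(r(F)-2)$ green flat, so $r(G\cap H_i)\ge r(F)-2$, and if equality held for both $i$ one deduces $G\cap H_i=H'\cap H_i$, whence $G\cap F=H'$, contradicting $r(G\cap F)=r(F)$. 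The key point you are missing is that the monochromatic hyperplane $H'$ pins down $G\cap H_1$ and $G\cap H_2$ as \emph{specific} complementary sub-hyperplanes, not arbitrary rank-$(r(F)-2)$ subsets.
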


\begin{proof}
Assume $r(G\cap F) < r(F)$. Then there is a rank-$(r(F)-1)$ affine flat $H_G$ that is contained in $F$ and contains $G$. As $H_G$ is a hyperplane of $AG(r-1,2)|F$, there is another hyperplane $H_R$ of $AG(r-1,2)|F$ that is complementary to $H_G$ in $F$. Moreover, $H_R \subseteq R \cap F$, so $r(R \cap F) \geq r(F) - 1$. If there is a red point $z \text{ in }H_G$, then $r(R\cap F) = r(F)$. Otherwise, $r(R\cap F) = r(G\cap F) = r(F) - 1$, and each of $R\cap F$ and $G\cap F$ is an affine flat.

Now suppose that $r(G \cap F) = r(F)$ and that $H_1$ and $H_2$ are disjoint hyperplanes of $AG(r-1,2)|F$ with $r(G \cap H_1) < r(F) - 1$ and $r(G \cap H_2) < r(F) - 1$. As $AG(r-1,2)|(G \cap F)$ is an affine target of rank $r(F)$, there is a hyperplane $H'$ of $AG(r-1,2)|F$ that is monochromatic green. Since $H'$ must meet both of $H_1$ and $H_2$, its intersection with each such set has rank $r(F) - 3$. Since $F$ is green, it follows that $H_1$ or $H_2$ is green.
\end{proof}

\begin{lemma} \label{affinefullrank} 
Let $(G,R)$ be a $2$-coloring of $AG(r-1,q)$, where $q\geq 3$. Assume that $G$ is an affine target and $F$ is a flat of $AG(r-1,q)$. Then exactly one of $G \cap F$ and $R \cap F$ has rank $r(F)$.
\end{lemma}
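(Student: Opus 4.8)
The plan is to mimic the proof of Lemma~\ref{fullrank} (the projective analogue) as closely as possible, using the fact that affine targets are closed under induced restriction together with a Singh--Oxley-type rank dichotomy for affine geometries. First I would restrict attention to the flat $F$: since $G$ is an affine target, $AG(r-1,q)|(G\cap F)$ is an affine target by the preceding proposition, so there is a nested sequence $(F_0',F_1',\dots,F_{k}')$ of affine flats of $AG(r-1,q)|F$ with $F_k'=F$ realizing $G\cap F$. From the construction, whichever colour occupies the top block $F_k'-F_{k-1}'$ spans $F$; so at least one of $G\cap F$ and $R\cap F$ has rank $r(F)$. (If $F_k'=F_{k-1}'$, i.e.\ the top block is empty, one simply looks at the last nonempty block; in any case some colour class contains a spanning affine subset of $F$.)

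Next I would argue that not both can have rank $r(F)$. The complementary colour class is contained in $F_{k-1}'$, a proper affine flat of $F$; more usefully, inside $F$ we can use Lemma~\ref{affinepartition} to split $F$ into $q$ parallel hyperplanes. The cleanest route: suppose for contradiction that $r(G\cap F)=r(R\cap F)=r(F)$. Consider the canonical nested sequence for $G\cap F$; its second-to-top flat $F_{k-1}'$ is a proper flat containing one of the two colour classes, say $R\cap F\subseteq F_{k-1}'$. Then $r(R\cap F)\le r(F_{k-1}')\le r(F)-1$, a contradiction. So the statement reduces to the observation that in the canonical nested sequence defining the affine target $G\cap F$, one colour class is always confined to a proper flat. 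The only subtlety is that the target could be "all green", i.e.\ $G\cap F=F$ and $R\cap F=\emptyset$, but then trivially only $G\cap F$ has rank $r(F)$; and symmetrically if it is all red.

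I expect the main obstacle to be the bookkeeping around which colour is "on top" and degenerate cases where some of the flats $F_i'$ coincide or are empty, since the affine target is only defined via a nested sequence where the blocks $F_{i+1}'-F_i'$ alternate colour starting with green on $F_1'-F_0'$. In particular one has to be careful that when $F$ happens to be monochromatic the nested sequence may have length essentially one, and that the parity convention (green on even blocks) is respected when passing from $AG(r-1,q)$ to $AG(r-1,q)|F$. A safe way to finish, avoiding $q=2$ pathologies that forced the extra clause in Lemma~\ref{BinaryAffineRank}, is to invoke directly the cited result [\ref{SINGH}, Lemma~2.1] (as was done in Lemma~\ref{fullrank}) to get that $r(G\cap F)$ or $r(R\cap F)$ equals $r(F)$, and then to use the nested-sequence description purely to rule out equality: since $G\cap F$ is an affine target, its red part lies in a proper flat of $F$ whenever $G\cap F\ne F$, and the claim follows. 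The hypothesis $q\ge 3$ is what guarantees that a proper affine flat of $F$ has rank strictly less than $r(F)$ in the strong sense needed — more precisely, it is what makes the "or" in [\ref{SINGH}, Lemma~2.1] an exclusive "or", exactly as in the projective case.
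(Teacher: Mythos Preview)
Your approach is correct but considerably more elaborate than the paper's, which dispatches the lemma in three lines without invoking the nested-sequence structure for the ``at least one'' direction at all. The paper simply assumes $r(G\cap F)<r(F)$, takes a hyperplane $H_G$ of $F$ containing $G\cap F$, and observes that $F-H_G\subseteq R\cap F$; since $q\ge 3$, the complement of an affine hyperplane contains at least two parallel hyperplanes and therefore spans $F$, giving $r(R\cap F)=r(F)$. The ``at most one'' direction (one colour confined to the penultimate flat $F_{k-1}'$) is left implicit, exactly as in your argument.

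Your route through the nested sequence for both directions works, but your diagnosis of where $q\ge 3$ enters is backwards: it is needed so that the top block $F-F_{k-1}'$ spans $F$ (the ``at least one'' direction), not so that a proper flat has strictly smaller rank (that holds for every $q$ and is what gives ``at most one''). Also, your ``safe way'' via [\ref{SINGH}, Lemma~2.1] is not actually safe: that lemma is stated for projective geometries, and its naive affine analogue fails for $q=2$ --- this failure is precisely why Lemma~\ref{BinaryAffineRank} carries its extra clause --- so you would have to re-prove it for affine geometries with $q\ge 3$, which is exactly what the paper's three-line argument does directly.
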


\begin{proof}
Assume $r(G\cap F) < r(F)$. Then there is a rank-$(r(F)-1)$ affine flat $H_G$ containing $G \cap F$. Thus $F-H_G$ does not contain any green points, so $r(R\cap F) = r(F)$.
\end{proof}
As with $2$-colorings of $E(PG(r-1,q))$, for a $2$-coloring $(G,R)$ of $E(AG(r-1,q))$, a flat $F$ is \textit{green} if $r(G \cap F) = r(F)$. We call $F$ \textit{red} if $r(R\cap F) = r(F)$. Furthermore, a flat $F$ of $AG(r-1,2)$ is \textit{half-green and half-red} if $r(G\cap F) = r(R\cap F) = r(F) - 1$. In this case,  $G \cap F$ and $R \cap F$ are complementary hyperplanes of $AG(r-1,2)|F $. 

The following results show how one can get an affine target from a projective target and how to construct projective targets from affine targets. 

\begin{proposition}
    
   Let $(G,R)$ be a $2$-coloring of $PG(r-1,q)$. Let $H$ be a hyperplane of $PG(r-1,q)$. Assume that $G$ is a projective target. Then $PG(r-1,q)|(G - H)$ is an affine target.
\end{proposition}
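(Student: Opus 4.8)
The plan is to start from the canonical nested sequence $(F_0, F_1, \dots, F_k)$ of projective flats realizing $G$ as a projective target, and to intersect it with the affine geometry $AG(r-1,q) = PG(r-1,q) - H$. Set $F_i' = F_i - H = F_i \cap (E(PG(r-1,q)) - H)$ for each $i$. Since the intersection of a projective flat with the complement of $H$ is an affine flat of $AG(r-1,q)$, the sequence $(F_0', F_1', \dots, F_k')$ is a nested sequence of affine flats (after deleting repeats to make it canonical, if desired). Moreover, $G - H = G \cap (E(PG(r-1,q)) - H)$, and since $G$ is the union of the sets $F_{i+1} - F_i$ over even $i$, intersecting with the complement of $H$ gives that $G - H$ is the union of the sets $F_{i+1}' - F_i' = (F_{i+1} - F_i) - H$ over even $i$. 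Hence $AG(r-1,q)|(G-H)$ is an affine target by definition.

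First I would record the elementary fact that for any projective flat $F$ and any projective hyperplane $H$, the set $F - H$ is a flat of the affine geometry $AG(r-1,q) = PG(r-1,q)|(E - H)$; this is immediate since flats of a restriction $M|X$ are exactly the sets $F \cap X$ for $F$ a flat of $M$. Then I would verify the two containment/nesting properties: (i) $F_0' \subseteq F_1' \subseteq \dots \subseteq F_k'$, which follows by intersecting the nesting $F_0 \subseteq \dots \subseteq F_k$ with $E - H$; and (ii) $F_0' = \emptyset - H = \emptyset$ and $F_k' = E(PG(r-1,q)) - H = E(AG(r-1,q))$, so the sequence has the right endpoints. Next I would do the set-theoretic computation showing $G - H = \bigcup_{i \text{ even}} (F_{i+1}' - F_i')$, using that set difference and intersection with a fixed set commute: $(F_{i+1} - F_i) \cap (E - H) = (F_{i+1} \cap (E-H)) - (F_i \cap (E-H)) = F_{i+1}' - F_i'$, and that intersection distributes over the union defining $G$.

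I do not expect a serious obstacle here; the statement is essentially a definitional unwinding, and the only mild subtlety is the bookkeeping that the affine flats $F_i'$ may coincide for consecutive indices even when the $F_i$ are distinct (for instance if $F_{i+1} - F_i \subseteq H$), but this does not affect the argument since the definition of an affine target allows a non-canonical nested sequence with repeated flats — exactly as noted in the remark following Proposition~\ref{projwelldefined} for projective targets. If one wants the canonical form, one simply discards duplicates at the end. So the proof is short: establish that $(F_0 - H, F_1 - H, \dots, F_k - H)$ is a nested sequence of affine flats, observe that intersecting the defining union with $E - H$ shows $G - H$ is the corresponding union of differences, and conclude that $AG(r-1,q)|(G-H)$ is an affine target.
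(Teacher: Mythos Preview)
Your proposal is correct and follows essentially the same approach as the paper: intersect the nested sequence $(F_0,F_1,\dots,F_k)$ with the complement of $H$ to obtain a nested sequence $(F_0-H,F_1-H,\dots,F_k-H)$ of affine flats, and observe that $G-H$ is the union of the corresponding differences for even $i$. The paper's proof is terser and does not dwell on the possibility of repeated flats, but the content is the same.
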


\begin{proof}
    As $G$ is a projective target, $G$ corresponds to a nested sequence $(F_0,F_1,\dots,F_k)$ of projective flats, where $G$ is equal to the union of $F_{i+1} - F_i$ for all even $i$. Then $F_j - H$ is an affine flat for all $j$. Therefore, $(F_0 - H, F_1 - H,\dots, F_k - H)$ is a nested sequence of affine flats. Let $F_j' = F_j - H$ for all $j$. Then $PG(r-1,q)|(G - H)$ corresponds to the nested sequence $(F'_0,F'_1,\dots,F'_k)$ of affine flats and $G - H$  is equal to the union of $F'_{i+ 1} - F'_i$ for all even $i$.
\end{proof}
The following result is immediate.
\begin{proposition}\label{standardtarget}
    Let $(G,R)$ be a $2$-coloring of $AG(r-1,q)$. Assume that $G$ is an affine target corresponding to a nested sequence $(F_0,F_1,\dots,F_k)$ of affine flats where $G$ is equal to the union of $F_{i+1} - F_i$ for all even $i$. Viewing $AG(r-1,q)$ as a restriction of $PG(r-1,q)$, the sequence $(\cl_P(F_0),\cl_P(F_1),\dots,\cl_P(F_k))$ is a nested sequence of projective flats and, if $G_P$ is the projective target that is the union of $\cl_P(F_{i+1}) - \cl_P(F_i)$ for all even $i$, and $H = E(PG(r-1,q)) - E(AG(r-1,q))$, then $PG(r-1,q)|(G_P - H) \cong AG(r-1,q)|G$.
\end{proposition}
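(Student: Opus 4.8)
The plan is to verify the two assertions in turn, both of which come down to routine facts about how the flats of a restriction $PG(r-1,q)|X$, with $X = E(PG(r-1,q)) - H$, sit inside $PG(r-1,q)$. First I would record that, for any flat $F$ of $AG(r-1,q)$, we have $F = \cl_P(F) - H$: since $AG(r-1,q) = PG(r-1,q)|X$, the flats of $AG(r-1,q)$ are exactly the sets $F' \cap X$ with $F'$ a flat of $PG(r-1,q)$, and closure in the restriction is $\cl_P(\cdot) \cap X$, so $\cl_P(F) \cap X = F$, i.e. $\cl_P(F) - H = F$. Applying monotonicity of $\cl_P$ to the chain $F_0 \subseteq F_1 \subseteq \cdots \subseteq F_k$ gives the chain of projective flats $\cl_P(F_0) \subseteq \cl_P(F_1) \subseteq \cdots \subseteq \cl_P(F_k)$. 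Moreover $\cl_P(F_0) = \cl_P(\emptyset) = \emptyset$, and since $AG(r-1,q)$ has the same rank as $PG(r-1,q)$ we get $\cl_P(F_k) = \cl_P(E(AG(r-1,q))) = E(PG(r-1,q))$. Hence $(\cl_P(F_0), \dots, \cl_P(F_k))$ is a nested sequence of projective flats, and by definition $G_P := \bigcup_{i \text{ even}} (\cl_P(F_{i+1}) - \cl_P(F_i))$ is a projective target.

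Next I would compute $G_P - H$ directly, using the elementary set identity $(A - B) - H = (A - H) - (B - H)$, which holds because $A - H$ is disjoint from $H$:
\begin{align}
G_P - H &= \bigcup_{i \text{ even}} \big((\cl_P(F_{i+1}) - \cl_P(F_i)) - H\big) \nonumber \\
&= \bigcup_{i \text{ even}} \big((\cl_P(F_{i+1}) - H) - (\cl_P(F_i) - H)\big) = \bigcup_{i \text{ even}} (F_{i+1} - F_i) = G. \nonumber
\end{align}
Thus $PG(r-1,q)|(G_P - H)$ is literally the matroid $PG(r-1,q)|G$, which equals $AG(r-1,q)|G$ since $G \subseteq E(AG(r-1,q))$; this gives the claimed isomorphism (in fact an equality).

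I do not expect a genuine obstacle — this is precisely why the authors call the result immediate. The only point deserving a moment's care is the identification $F = \cl_P(F) - H$ for affine flats $F$, together with the observation that passing to projective closures neither collapses $E(AG(r-1,q))$ to a proper flat nor merges consecutive terms in a way that disturbs the parity bookkeeping (if $F_i = F_{i+1}$, then $\cl_P(F_i) = \cl_P(F_{i+1})$ and both contribute the empty set, which is allowed). Once that is in place, both halves of the statement follow from monotonicity of closure and the one-line set manipulation above.
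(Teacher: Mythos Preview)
Your proposal is correct and supplies exactly the routine verification the paper omits; the paper gives no proof at all, simply declaring the result ``immediate,'' and your argument unpacks that immediacy via $F = \cl_P(F) - H$ and the set identity $(A - B) - H = (A - H) - (B - H)$, arriving at $G_P - H = G$ on the nose.
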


We call the projective target $G_P$ that arises from the affine target $G$ in Proposition \ref{standardtarget} the \textit{standard projective target arising from $G$}. Now consider an affine target $M_1$ that arises from a green-red coloring of $PG(r-1,q)\backslash H$ where $H$ is a projective hyperplane. Let $M_2$ be a projective target that arises as a green-red coloring of $H$. We say that $M_1$ and $M_2$ are \textit{compatible} if the green-red coloring of $PG(r-1,q)$ induced by the colorings of $M_1$ and $M_2$ is a projective target, that is, if $PG(r-1,q)|(E(M_1) \cup E(M_2))$ is a projective target. In the previous proposition, the affine target $G$ and the projective target $G_P \cap H$ are compatible as $PG(r-1,q)|(G \cup (G_P \cap H))$ is the projective target $G_P$. We now consider when $PG(r-1,q)|(E(M_1) \cup E(M_2))$ is not a standard projective target. As $M_1$ is an affine target, it corresponds to a canonical nested sequence $(F_0,F_1,\dots, F_k)$ of affine flats. Let $F_h$ be the first non-empty flat in this sequence. Then $\cl_P(F_h)$ meets the projective hyperplane $H$ in a rank-$(r(F_h)-1)$ projective flat $T$. In the construction of a standard projective target, $T$ is monochromatic. The next result shows that, apart from the standard projective target, the only way for $M_1$ and $M_2$ to be compatible is if we modify the standard projective target by replacing $T$ with a $2$-coloring of it that is a projective target.

\begin{proposition}
Let $(G,R)$ be a $2$-coloring of $PG(r-1,q)$. Let $H$ be a projective hyperplane. Assume that $PG(r-1,q)|(G - H)$ is an affine target corresponding to a canonical nested sequence $(F_0,F_1,\dots,F_k)$ of affine flats. Assume that $PG(r-1,q)|(G \cap H)$ is a projective target corresponding to a canonical nested sequence $(S_0,S_1,\dots,S_t)$ of projective flats. Then $PG(r-1,q)|(G-H)$ and $PG(r-1,q)|(G \cap H)$ are compatible if and only if, when $\beta$ is the smallest $h$ such that $r(F_h) > 0$,
\begin{enumerate}[label=(\roman*)]
    \item \label{one} there is an $m$ in $\{0,1,\dots,t\}$ such that $F_\beta \cup S_m$ is a projective flat, $r(S_m) = r(F_\beta) - 1,$ and $PG(r-1,q)|(G \cap (F_\beta \cup S_m))$ is a projective target; and

    \item \label{two} for all $\alpha$ in $\{1,2,\dots, k - \beta\}$, the set $F_{\beta + \alpha} \cup S_{m + \alpha}$ is a projective flat, $(F_{\beta + \alpha} \cup S_{m + \alpha}) - (F_{\beta + \alpha - 1} \cup S_{\beta + \alpha - 1})$ is monochromatic, and $t = m+k - \beta$.
\end{enumerate}
\end{proposition}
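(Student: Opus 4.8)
The plan is to prove both implications by relating the canonical projective nested sequence of $PG(r-1,q)|G$ to the canonical nested sequences of its two pieces, $PG(r-1,q)|(G-H)$ and $PG(r-1,q)|(G\cap H)$. I will use repeatedly the elementary facts that, for a projective flat $P$ having a point off $H$, the sets $P-H$ and $P\cap H$ are flats with $r(P-H)=r(P)=r(P\cap H)+1$ and $P=(P-H)\cup(P\cap H)$, and that $\cl_P(F)-H=F$ and $r(\cl_P(F))=r(F)$ for an affine flat $F$.

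For the forward direction, I would take a canonical nested sequence $(P_0,P_1,\dots,P_s)$ of the projective target $G$ and let $j$ be the least index with $P_j\not\subseteq H$. For $i\geq j$ one has $r(P_i-H)=r(P_i)$ and $r(P_i\cap H)=r(P_i)-1$, so, since $P_j\subsetneq P_{j+1}\subsetneq\dots\subsetneq P_s$, both $(P_i-H)_{i\geq j}$ and $(P_i\cap H)_{i\geq j}$ are strictly increasing. By the proposition immediately preceding Proposition~\ref{standardtarget}, $(P_0-H,\dots,P_s-H)$ is a nested sequence of affine flats defining $G-H$, and, as in the proof of Lemma~\ref{targetinudcedres} applied to the flat $H$, $(P_0\cap H,\dots,P_s\cap H)$ is a nested sequence of projective flats of $H$ defining $G\cap H$; in each of these the nonempty blocks are monochromatic and alternate in colour, being the affine or projective parts of the blocks of the canonical sequence of $G$. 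Appealing to uniqueness of canonical nested sequences, after discarding repeats (and possibly inserting a phantom empty flat for parity) these two become $(F_0,\dots,F_k)$ and $(S_0,\dots,S_t)$, and the strict monotonicity forces $P_j-H=F_\beta$, $P_j\cap H=S_m$ for a unique $m$, and $P_{j+\alpha}-H=F_{\beta+\alpha}$, $P_{j+\alpha}\cap H=S_{m+\alpha}$ for $\alpha=0,1,\dots,s-j$, with $s-j=k-\beta=t-m$. Reading this off yields that $P_{j+\alpha}=F_{\beta+\alpha}\cup S_{m+\alpha}$ is a projective flat, $P_{j+\alpha}-P_{j+\alpha-1}$ is monochromatic, $t=m+(k-\beta)$, $r(S_m)=r(P_j)-1=r(F_\beta)-1$, and $PG(r-1,q)|(G\cap(F_\beta\cup S_m))=PG(r-1,q)|(G\cap P_j)$ is a target by Lemma~\ref{targetinudcedres} --- which is precisely conditions (i) and (ii).

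For the reverse direction, given (i) and (ii), I would first observe that $r(S_m)=r(F_\beta)-1$, together with $F_\beta\cup S_m$ being a projective flat while $S_m\subseteq H$ and $F_\beta\cap H=\emptyset$, forces $S_m=\cl_P(F_\beta)\cap H$ and $F_\beta\cup S_m=\cl_P(F_\beta)$. Taking a canonical nested sequence $(T_0,T_1,\dots,T_u)$ of the target $PG(r-1,q)|(G\cap\cl_P(F_\beta))$, so that $T_u=\cl_P(F_\beta)$, I would then form
\[
(T_0,\dots,T_u,\;F_{\beta+1}\cup S_{m+1},\;F_{\beta+2}\cup S_{m+2},\;\dots,\;F_k\cup S_t),
\]
which by (ii) is a nested sequence of projective flats with last term $AG(r-1,q)\cup S_t=PG(r-1,q)$ and in which each appended block is monochromatic. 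To see that this sequence defines $G$: every point of $\cl_P(F_\beta)$ receives its correct colour from $(T_0,\dots,T_u)$, while every remaining point lies in exactly one appended block, which is monochromatic, so one only needs the colour of each appended block to match the parity of its position. This comes down to the congruence $u\equiv\beta\pmod 2$: indeed $F_\beta$, being the first nonempty flat of the canonical affine sequence, is monochromatic and is green exactly when $\beta$ is odd, so the flat $\cl_P(F_\beta)$ it spans is green exactly when $\beta$ is odd, whereas the outermost block of the canonical sequence of $G\cap\cl_P(F_\beta)$ is green exactly when $u$ is odd; and since the block $F_{\beta+\alpha}-F_{\beta+\alpha-1}$ of $G-H$ is green exactly when $\beta+\alpha$ is odd, its colour agrees with the parity $u+\alpha$ of the $\alpha$-th appended block. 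Hence $G$ is the union of the even blocks of the displayed sequence, so the two pieces are compatible.

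The step I expect to be the main obstacle is the bookkeeping at the transition flat $F_\beta$: identifying $P_j-H$ with $F_\beta$ and $P_j\cap H$ with some $S_m$ (which rests on uniqueness of the canonical nested sequence and on correctly handling the leading empty flats --- a canonical sequence may begin $\emptyset,\emptyset,\dots$, which can shift the parities of later blocks), and then making the parities line up in the reverse direction via $u\equiv\beta\pmod 2$. A few degenerate configurations --- for instance $G-H=\emptyset$ (where $F_\beta=AG(r-1,q)$ and $\cl_P(F_\beta)=PG(r-1,q)$) or $G\cap H=\emptyset$ --- should be checked separately, but they present no real difficulty.
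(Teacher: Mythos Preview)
Your proposal is correct and follows essentially the same approach as the paper: for the forward direction you slice the canonical projective sequence $(P_0,\dots,P_s)$ of $G$ by $H$ and identify the resulting pieces with $(F_i)$ and $(S_i)$ via uniqueness of canonical sequences (the paper does exactly this, writing $\gamma$ for your $j$ and handling the leading-empties bookkeeping through an explicit four-case split), and for the reverse direction you splice a nested sequence for the inner target $G\cap\cl_P(F_\beta)$ onto the tail $F_{\beta+\alpha}\cup S_{m+\alpha}$. The only difference is cosmetic: the paper takes $(S_0,\dots,S_{m-1},F_\beta\cup S_m,\dots)$ or $(S_0,\dots,S_m,F_\beta\cup S_m,\dots)$ as the spliced sequence (according as the colours of $S_m-S_{m-1}$ and $F_\beta$ agree or not) rather than using the canonical sequence $(T_0,\dots,T_u)$ of $G\cap\cl_P(F_\beta)$, but both choices define the same target and your parity check $u\equiv\beta\pmod 2$ plays the same role as the paper's colour-matching dichotomy.
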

\begin{proof}
    Assume that $PG(r-1,q)|(G-H)$ and $PG(r-1,q)|(G\cap H)$ are compatible. Then $PG(r-1,q)|G$ is a projective target corresponding to a canonical nested sequence $(X_0,X_1,\dots,X_s)$ of projective flats. Thus $(X_0 \cap H, X_1 \cap H,\dots, X_s \cap H)$ is a nested sequence of projective flats for $PG(r-1,q)|H$, and $(X_0 - H, X_1 - H,\dots, X_s - H)$ is a nested sequence of affine flats for $PG(r-1,q)\backslash H$. Now,
    \begin{enumerate}[label=(\alph*)]
        \item\label{a} $X_1  = \emptyset$ and $X_2 \cap H = \emptyset$ but $X_3 \cap H \not= \emptyset$; or
        \item\label{b} $X_1 = \emptyset$ and $X_2 \cap H \not= \emptyset$; or
        \item\label{c} $X_1 \not= \emptyset$ but $X_1 \cap H = \emptyset$ and $X_2 \cap H \not=\emptyset$; or
        \item\label{d} $X_1 \not= \emptyset$ and $X_1 \cap H \not= \emptyset$.
    \end{enumerate}
For the projective target $PG(r-1,q)|H$, the canonical nested sequence is $(X_2 \cap H, X_3 \cap H, \dots, X_s \cap H)$ in case \ref{a} and is $(X_0 \cap H, X_1 \cap H, \dots, X_s \cap H)$ in the other three cases.

Let $\gamma$ be the smallest $h$ such that $X_h - H$ is non-empty. Then $\cl_P(X_\gamma - H)$ meets $H$ in a projective flat of rank $r(X_\gamma) - 1$. Thus $PG(r-1,q)|(G \cap X_\gamma \cap H)$ is a projective target in $X_\gamma \cap H$ that corresponds to the canonical nested sequence $(X_2 \cap H, X_3 \cap H, \dots, X_\gamma \cap H)$ in case \ref{a} and to the canonical nested sequence $(X_0\cap H, X_1 \cap H, \dots, X_\gamma \cap H)$ in the other three cases.

Now $(X_\gamma - X_{\gamma - 1}) - H = (X_\gamma - H) - (X_{\gamma - 1} - H) = (X_\gamma - H) - \emptyset$. Thus $X_\gamma - H$ is monochromatic. Therefore, the canonical nested sequence corresponding to $PG(r-1,q)|(G-H)$ is $(X_{\gamma - 1} - H, X_\gamma - H, \dots, X_s - H)$ when $X_\gamma - H$ is green and is $(\emptyset, X_{\gamma-1} - H, X_\gamma - H, \dots, X_s - H)$ when $X_\gamma - H$ is red. Thus $(F_0,F_1,\dots,F_k)$ is $(X_{\gamma - 1} - H, X_\gamma - H,\dots, X_s - H)$ when $X_\gamma - H$ is green and is $(\emptyset, X_{\gamma - 1} - H, X_\gamma - H,\dots, X_s - H)$ when $X_\gamma - H$ is red. We see that $F_\beta = X_\gamma - H$, that $F_\beta \cup (X_\gamma \cap H)$ is a projective flat, that $r(X_\gamma \cap H) = r(F_\beta) - 1$, and that $PG(r-1,q)|(G \cap (F_\beta \cup (X_\gamma \cap H))) = PG(r-1,q)|(G \cap X_\gamma)$. Therefore, $PG(r-1,q)|(G \cap (F_\beta \cup (X_\gamma \cap H)))$ is a projective target. Thus \ref{one} holds. Evidently $F_{\beta + \alpha} \cup (X_{\gamma + \alpha} \cap H) = X_{\gamma + \alpha}$, so $F_{\beta + \alpha} \cup (X_{\gamma + \alpha} \cap H)$ is a projective flat for all $\alpha$ in $\{1,2,\dots,k - \beta\}$. Moreover, $\gamma + k - \beta = s$ and \ref{two} holds.

Now suppose that \ref{one} and \ref{two} hold. We know that $S_m - S_{m-1}$ and $F_\beta$ are monochromatic. Then $PG(r-1,q)|G$ is a projective target for which the corresponding nested sequence is $(S_0,S_1,\dots,S_{m-1}, F_\beta \cup S_m, F_{\beta + 1} \cup S_{m+1},\dots, \linebreak F_k \cup S_t)$ when the colors of $S_m - S_{m-1}$ and $F_\beta$ match and is $(S_0,S_1,\dots,\linebreak S_{m}, F_\beta \cup S_m, F_{\beta + 1} \cup S_{m + 1}, \dots, F_k \cup S_t)$ when the colors of $S_m - S_{m-1}$ and $F_\beta$ differ. We conclude that $PG(r-1,q)|(G \cap H)$ and $PG(r-1,q)|(G-H)$ are compatible.
\end{proof}

A \textit{minimal affine-non-target} is an affine matroid that is not an affine target such that every proper induced restriction of it is an affine target. The next result is an analog of Lemma~\ref{rankcomp}.
\begin{lemma}\label{affinenontarget}
    Let $(G,R)$ be a $2$-coloring of $AG(r-1,q)$. Assume $G$ is a rank-r minimal affine-non-target. Then $r(R) = r$.
\end{lemma}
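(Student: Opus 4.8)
The plan is to transcribe the proof of the projective analog, Lemma~\ref{rankcomp}, into the affine setting. Argue by contradiction: suppose $r(R) < r$. Since $r(R) < r$, the affine closure $\cl(R)$ of $R$ in $AG(r-1,q)$ is a proper affine flat, and I claim that every proper affine flat lies in some affine hyperplane of $AG(r-1,q)$. To see this, view $AG(r-1,q)$ as $PG(r-1,q)$ with a complementary hyperplane $H_\infty$ deleted; then $\cl_P(R)$ is a projective flat of rank at most $r-1$, so it is contained in a projective hyperplane $H_P$ with $H_P \neq H_\infty$, and $H_P \setminus H_\infty$ is an affine hyperplane $H$ of $AG(r-1,q)$ containing $R$.

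Next, since $H$ is a proper flat of $AG(r-1,q)$ and $G$ is a minimal affine-non-target, the restriction $AG(r-1,q)|(G \cap H)$ is an affine target. Now $AG(r-1,q)|H$ is isomorphic to $AG(r-2,q)$, and $(G \cap H, R \cap H)$ is a $2$-coloring of it; by the green--red symmetry for affine targets noted at the start of Section~\ref{affinetargets}, the set $R \cap H$, which equals $R$, is an affine target when regarded as a restriction of $AG(r-1,q)|H$.

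The remaining step — which I expect to be the only genuine obstacle — is to promote ``$R$ is an affine target of $H$'' to ``$R$ is an affine target of $AG(r-1,q)$''. For this I would take a nested sequence $(F_0, F_1, \dots, F_k)$ of affine flats of $H$ with $F_k = H$ that witnesses $R$ being an affine target in $H$; each $F_i$ is also an affine flat of $AG(r-1,q)$, so appending $E(AG(r-1,q))$ yields a nested sequence of affine flats of $AG(r-1,q)$. The points of $E(AG(r-1,q)) - H$ then need to be assigned the colour of the complement of $R$, which imposes a parity condition on the length $k$; if it fails, one first repeats the flat $H$ once (permissible since nested sequences may contain repeated flats). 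This exhibits $R$ as an affine target of $AG(r-1,q)$, whence, by the green--red symmetry again, $G$ is an affine target of $AG(r-1,q)$ — contradicting the hypothesis that $G$ is an affine-non-target. Since $r(R) \le r$ always holds, we conclude $r(R) = r$. Everything but this last promotion is a routine copy of the projective argument.
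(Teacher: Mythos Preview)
Your proposal is correct and follows essentially the same approach as the paper: assume $r(R)<r$, find an affine hyperplane $H\supseteq R$, use minimality to get a target on $H$, and then extend the nested sequence by appending $E(AG(r-1,q))$ with a parity fix. The only difference is that the paper stays on the $G$ side throughout---it takes the nested sequence $(F_0,\dots,F_{n-1},H)$ for $G\cap H$ and extends it directly to $(F_0,\dots,F_{n-1},H,E(AG(r-1,q)))$ or $(F_0,\dots,F_{n-1},E(AG(r-1,q)))$ according to the colour of $H-F_{n-1}$---whereas you pass to $R$ via the green--red symmetry, extend, and pass back; your route is a harmless detour.
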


\begin{proof}
Assume $r(R) < r$. Then $R$ is contained in an affine hyperplane $H$. As $G$ is a minimal affine-non-target, $AG(r-1,q)|(G\cap H)$ is an affine target corresponding to a nested sequence $(F_0,F_1,\dots,F_{n-1},H)$ of affine flats. As $R\subseteq H$, there are no red points in $E(AG(r-1,q)) - H$. Then we obtain the contradiction that $G$ is an affine target for which a corresponding sequence of nested affine flats is $(F_0,F_1,\dots, F_{n-1}, H, E(AG(r-1,q)))$ if $H - F_{n-1} \subseteq R$ and $(F_0,F_1,\dots,F_{n-1},E(AG(r-1,q)))$ if $H-F_{n-1} \subseteq G$.
\end{proof}

\begin{lemma} \label{BinaryDisjointHP}
Let $(G,R)$ be a $2$-coloring of $AG(r-1,2)$. Assume $G$ is a minimal affine-non-target of rank $r$. Then $AG(r-1,2)$ has a red hyperplane and a green hyperplane that are disjoint.
\end{lemma}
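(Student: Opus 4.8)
The plan is to exploit the partition of $AG(r-1,2)$ into two disjoint hyperplanes (Lemma \ref{affinepartition} with $q=2$), together with Lemma \ref{BinaryAffineRank}, to locate a green hyperplane and a red hyperplane that are complementary. First I would record that since $G$ is a minimal affine-non-target of rank $r$, Lemma \ref{affinenontarget} gives $r(R) = r$; so neither $G$ nor $R$ is contained in a hyperplane, and in particular no hyperplane of $AG(r-1,2)$ is monochromatic. Thus every hyperplane $H$ satisfies one of: $H$ is green (but not monochromatic), $H$ is red (but not monochromatic), or $H$ is half-green and half-red. I would then consider a partition $\{H, H'\}$ of $AG(r-1,2)$ into two disjoint hyperplanes and split into cases according to the types of $H$ and $H'$.

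The favorable case is when one of $H, H'$ is green and the other is red; then we are done immediately. The work is to rule out, or rather to bootstrap from, the remaining configurations. Suppose $H$ is half-green and half-red; then $G\cap H$ and $R\cap H$ are complementary rank-$(r-2)$ affine flats inside $H$. Since $r(G)=r$, there is a green point in $H'$; since $r(R)=r$, there is a red point in $H'$. The idea is to slide this $2$-coloring: inside $H$ we have a green rank-$(r-2)$ flat $P$ and a red rank-$(r-2)$ flat $P'$ with $H = P \cup P'$. Form the two hyperplanes $\cl(P \cup \{x\})$ for a green point $x\in H'$ and... more carefully, I would use that in $AG(r-1,2)$ a rank-$(r-2)$ flat $P\subseteq H$ together with a point $y\notin H$ spans a hyperplane meeting $H$ exactly in $P$, and the complementary hyperplane meets $H$ exactly in $H - P = P'$. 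So choosing $y$ green makes $\cl(P\cup\{y\})$ a green hyperplane; its complement contains $P'$, which is monochromatic red, so if that complement also avoids green points it is a red hyperplane disjoint from the green one and we are done. If instead the complement of $\cl(P\cup\{y\})$ contains a green point, I would re-run the argument with the roles adjusted, using the "moreover" clause of Lemma \ref{BinaryAffineRank}: from a green flat and a pair of disjoint hyperplanes of it, one of the two is green, which lets me descend in rank.

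Concretely, the cleanest route is probably induction on $r$. For the base case ($r$ small, where a minimal affine-non-target first appears — $r = 4$, by Theorem \ref{affinetargetq2}, $U_{4,4}$), the statement can be checked directly: $AG(3,2)$ with its $2^4 = 16$ points, any $2$-coloring containing $U_{4,4}$ appropriately, has complementary planes of opposite colors. For the inductive step, take disjoint hyperplanes $H,H'$. If $\{H,H'\}$ already realizes one green and one red, stop. Otherwise both are green, or both are red, or at least one is half-half; in each subcase use Lemma \ref{BinaryAffineRank} to find within $H$ (or $H'$) a pair of disjoint rank-$(r-2)$ subflats one of which has the color opposite to the ambient hyperplane, extend each by a suitable point of $H'$ of the matching color to promote it to a hyperplane of $AG(r-1,2)$, and check the complement. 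The main obstacle I anticipate is the bookkeeping when a hyperplane is half-green and half-red: one must verify that the hyperplane spanned by the green half (plus an external green point) does not accidentally pick up red points forcing it to be only "green but not monochromatic" in a way that still blocks the complement from being red — this is exactly where the "moreover" clause of Lemma \ref{BinaryAffineRank}, applied to the green hyperplane and its two disjoint sub-hyperplanes, must be invoked to guarantee one sub-hyperplane is fully green, letting the argument close.
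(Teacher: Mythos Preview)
Your proposal has two genuine gaps that prevent it from going through as written.

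First, the induction on $r$ is not available. The hypothesis ``$G$ is a minimal affine-non-target'' does not descend: every proper induced restriction of $G$ \emph{is} an affine target, so no smaller-rank flat carries the hypothesis needed to invoke an inductive statement. Your base case $r=4$ is also circular: you appeal to Theorem~\ref{affinetargetq2} to say the minimal non-target is $U_{4,4}$, but Lemma~\ref{BinaryDisjointHP} is a lemma \emph{toward} that theorem.

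Second, the direct constructive route is incomplete exactly where you anticipated. Suppose $H$ is half-half, with $G\cap H=P$ and $R\cap H=P'$. You form $K_1=\cl(P\cup\{y\})$ for green $y\in H'$; this is indeed a green hyperplane. But its complement $K_2$ meets $H$ in $P'$ and meets $H'$ in some rank-$(r-2)$ flat $Q'$. If $K_2$ is not red, then by Lemma~\ref{BinaryAffineRank} it must be half-half with $R\cap K_2=P'$ and $G\cap K_2=Q'$ monochromatic green. Now you have merely traded one half-half hyperplane ($H$) for another ($K_2$), and ``re-running the argument'' gives no descent parameter; the moreover clause of Lemma~\ref{BinaryAffineRank} only tells you that one of two sub-hyperplanes of a green flat is green, which does not by itself break this loop. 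The cases where both $H,H'$ are green (or both red) are not addressed at all.

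The paper's proof is a contradiction argument with a different shape. Assume no disjoint green/red hyperplane pair exists. Since $r(G)=r(R)=r$, there is some red hyperplane $X_1$ and some green hyperplane $Y_1$; their complements $X_2,Y_2$ are then forced to be not-green and not-red respectively. Because the lemma fails, $X_1$ and $Y_1$ meet, giving a $2\times 2$ grid of rank-$(r-2)$ flats $F_{i,j}=X_i\cap Y_j$. Two short sublemmas (using Lemma~\ref{BinaryAffineRank} on the target restrictions to $X_2$ and to $F_{1,2}$, together with Lemma~\ref{new}) pin down enough colours in this grid to force $F_{2,2}$ monochromatic green and then $X_2$ green, a contradiction.
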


\begin{proof}
Assume the lemma fails. By Lemma~\ref{affinenontarget}, $r(R) = r$, so we have a red hyperplane $X_1$ and a green hyperplane $Y_1$. There are affine hyperplanes $X_2$ and $Y_2$ that are complementary to $X_1$ and $Y_1$, respectively. As the lemma fails, $X_2$ is not green and $Y_2$ is not red. By assumption, $X_1$ and $Y_1$ meet in a rank-$(r-2)$ flat $F_{1,1}$. For $(i,j) \not= (1,1)$, let $F_{i,j} = X_i\cap Y_j$. As $r(F_{1,1}) = r-2$, it follows that $r(F_{i,j}) = r-2$ for each $i$ and $j$. Then $\{F_{1,1}, F_{1,2}, F_{2,1}, F_{2,2}\}$ is a partition of $AG(r-1,2)$ and there are red points in each of $F_{1,2}$ and $F_{1,1}$, and there are green points in each of $F_{1,1}$ and $F_{2,1}$. Next we show the following.

\begin{sublemma}\label{noredF21}
    There is a red point in $F_{2,1}$.
\end{sublemma}
As $AG(r-1,2)|(R\cap X_2)$ is a target and $r(G \cap X_2) < r-1$, it follows, by Lemma~\ref{BinaryAffineRank}, that either $r(R\cap X_2) = r-1$, or $R\cap X_2$ and $G\cap X_2$ are affine flats of rank $r-2$. In the first case, there is certainly a red point in $F_{2,1}$. Consider the second case. Assume that $F_{2,1}$ is monochromatic green. Then $F_{2,2}$ is monochromatic red. As $r(R\cap F_{1,2}) > 0$, we see that $r(R\cap Y_2) =  r - 1$, so $Y_2$ is red, a contradiction. Thus \ref{noredF21} holds.
\begin{sublemma}
    $F_{1,2}$ is red. \label{F21RedFlat}
\end{sublemma}
Assume that $F_{1,2}$ is not red. Then $r(R \cap F_{1,2}) < r-2$. As $X_1$ is red and $r(G \cap F_{1,1}) > 0$, it follows that $r(G \cap F_{1,2}) < r - 2$. Thus, by Lemma~\ref{BinaryAffineRank}, $R \cap F_{1,2}$ and $G\cap F_{1,2}$ are affine flats of rank $r-3$. Observe that $F_{1,1}$ is not red, otherwise $r(R\cap Y_1) = r-1$, a contradiction. Moreover, $F_{1,1}$ is not a green flat, otherwise $X_1$ is a green hyperplane. Thus $R\cap F_{1,1}$ and $G\cap F_{1,1}$ are affine flats of rank $r-3$. Now, as $r(R\cap X_1) = r-1$ and $|R \cap X_1| = |G \cap X_1|$, it follows by Lemma~\ref{new} that $r(G \cap X_1) = r-1$, a contradiction to Lemma~\ref{BinaryAffineRank}. Therefore, \ref{F21RedFlat} holds.

As $Y_2$ is not red but $F_{1,2}$ is red, $F_{2,2}$ is monochromatic green. Since $r(G \cap F_{2,1}) > 0$, we obtain the contradiction that $X_2$ is green. 
\end{proof}

In each of the remaining results in this section, we shall consider disjoint sets $\textbf{X}$ and $\textbf{Y}$ of hyperplanes of $AG(r-1,q)$ where the members of $\textbf{X}$ and $\textbf{Y}$ partition $E(AG(r-1,q))$. With $\textbf{X} = \{X_1,X_2,\dots,X_q\}$ and $\textbf{Y} = \{Y_1,Y_2,\dots,Y_q\}$, we let $F_{i,j} = X_i\cap Y_j$ for all $i$ and $j$. 

\begin{lemma}\label{qaryDisjointHP}
    Let $(G,R)$ be a $2$-coloring of $AG(r-1,q)$, where $q\geq 3$. Assume that $G$ is a minimal affine-non-target of rank $r$. Then $AG(r-1,q)$ has a red hyperplane and a green hyperplane that are disjoint.
\end{lemma}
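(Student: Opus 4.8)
The plan is to mimic the structure of the binary proof in Lemma~\ref{BinaryDisjointHP}, but with the simplifications afforded by Lemma~\ref{affinefullrank}: when $q \geq 3$, every flat of $AG(r-1,q)$ is either green or red, with no ``half-green and half-red'' possibility. Suppose for contradiction that no red hyperplane is disjoint from a green hyperplane. By Lemma~\ref{affinenontarget}, $r(R) = r$, so there exist a red hyperplane and a green hyperplane; since by hypothesis they cannot be chosen disjoint, and by Lemma~\ref{affinehyperplaneint} two distinct hyperplanes of $AG(r-1,q)$ either are disjoint or meet in a rank-$(r-2)$ flat, any red hyperplane meets any green hyperplane in a rank-$(r-2)$ flat.

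First I would produce a red hyperplane $X_1$ and a green hyperplane $Y_1$ and use Lemma~\ref{affinepartition} to partition $AG(r-1,q)$ into hyperplanes $\textbf{X} = \{X_1,\dots,X_q\}$ parallel to $X_1$ and, separately, into hyperplanes $\textbf{Y} = \{Y_1,\dots,Y_q\}$ parallel to $Y_1$. Setting $F_{i,j} = X_i \cap Y_j$ as in the paragraph preceding the statement, each $F_{i,j}$ has rank $r-2$ (the parallel hyperplanes in $\textbf{X}$ are pairwise disjoint, and similarly for $\textbf{Y}$, so no $F_{i,j}$ can have rank $0$ without forcing a disjoint red--green pair among the $X_i$'s and $Y_j$'s — here I would have to check that some $X_i$ is red and some $Y_j$ is green, which follows since $X_1$ is red and $Y_1$ is green). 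The grid $\{F_{i,j}\}$ then partitions $E(AG(r-1,q))$ into $q^2$ rank-$(r-2)$ flats, each $X_i$ being the disjoint union $\bigcup_j F_{i,j}$ and each $Y_j$ being $\bigcup_i F_{i,j}$.

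The core of the argument is a parity/counting analysis on this $q \times q$ grid of colored flats. Because $X_1$ is red, Lemma~\ref{affinefullrank} applied inside $X_1$ (which is itself an affine target of rank $r-1$ with its own canonical nested sequence of flats) controls how the flats $F_{1,1},\dots,F_{1,q}$ can be colored: the restriction to $X_1$ being a target means the red rank is $r-1$ and there is a monochromatic hyperplane of $X_1$, etc. The key claim I expect to need is roughly: \emph{any green flat $F_{i,j}$ forces every other flat in its $X$-row $X_i$ and every other flat in its $Y$-column $Y_j$ to satisfy rank constraints that, combined with ``no disjoint red/green hyperplane,'' are contradictory.} Concretely, if $F_{i,j}$ is green and $X_i$ happens to be red, then inside $X_i$ the target structure plus Lemma~\ref{affinefullrank} should let me locate a monochromatic green hyperplane of $X_i$; that hyperplane, being a rank-$(r-2)$ flat, together with the red flat structure produces a genuinely disjoint red/green pair at the hyperplane level of $AG(r-1,q)$ — the contradiction. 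I would organize this as one or two sublemmas (in the style of \ref{noredF21} and \ref{F21RedFlat}) pinning down the color of a distinguished flat such as $F_{2,1}$ or $F_{2,2}$, then derive that some $X_i$ is green while being disjoint from a red $Y_j$ or from $X_1$.

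The main obstacle is the bookkeeping in that grid argument: for $q = 2$ the binary proof only had four cells $F_{1,1},F_{1,2},F_{2,1},F_{2,2}$ and could enumerate cases, whereas for general $q$ I must argue uniformly about a $q \times q$ array, using the \emph{affine target} structure of each row $X_i$ and column $Y_j$ (each is an $AG(r-2,q)$ carrying an induced affine target, by the induced-restriction closure) rather than ad hoc casework. The cleanest route is probably: show that the set of green cells in the grid, if nonempty, must be a full row or full column (a green $F_{i,j}$ with a red cell elsewhere in row $i$ and a red cell elsewhere in column $j$ would, via the within-row and within-column target structure, yield the forbidden disjoint pair), then observe that a full green row is a green hyperplane disjoint from any other (red) row, and a full green column is disjoint from any red column — either way contradicting the hypothesis, while if there are \emph{no} green cells then $G$ is empty, contradicting $r(G) = r$ (which holds because $G$ is a minimal affine-non-target of rank $r$). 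Getting the ``full row or full column'' dichotomy to follow rigorously from Lemmas~\ref{affinefullrank} and \ref{affinehyperplaneint} together with the definition of affine target is the step that will require the most care.
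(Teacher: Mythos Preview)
You set up the right framework (a red hyperplane $X_1$, a green hyperplane $Y_1$, the two parallel classes $\textbf{X}$ and $\textbf{Y}$, and the grid $F_{i,j}$), but you overlook the one-line observation that drives the paper's proof and then propose a grid dichotomy that is actually false.

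The missed step is this: the hyperplanes $X_1,\dots,X_q$ are pairwise disjoint, so if any $X_i$ with $i\ge 2$ were green, then $X_1$ (red) and $X_i$ (green) would already be a disjoint red--green pair, contradicting the hypothesis. Hence \emph{every} $X_i$ is red, and dually every $Y_j$ is green. With this in hand the paper finishes in three lines: since $X_1$ is red, Lemma~\ref{affinefullrank} forces at most one of $F_{1,1},\dots,F_{1,q}$ to be green, so (relabelling) $F_{1,1}$ and $F_{1,2}$ are red; since $Y_1$ and $Y_2$ are green, Lemma~\ref{affinefullrank} then forces $Y_1-F_{1,1}$ and $Y_2-F_{1,2}$ to be monochromatic green; but then $F_{2,1}\cup F_{2,2}\subseteq G$ gives $r(G\cap X_2)=r-1=r(R\cap X_2)$, contradicting Lemma~\ref{affinefullrank}.

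Your proposed ``full row or full column'' dichotomy cannot be established, because it is false in the configuration you are analysing. Once you know every $X_i$ is red, Lemma~\ref{affinefullrank} forces each row to contain at most \emph{one} green cell, so for $q\ge 2$ there is never a full green row; symmetrically each column has at most one red cell, so no full red column. (Indeed, combining these two counts already gives $q(q-1)\le q$, a contradiction for $q\ge 3$---another short finish.) Your sketch ``a green $F_{i,j}$ with a red cell elsewhere in row $i$ and in column $j$ yields a forbidden disjoint pair'' is not substantiated: having a green cell in a red row only tells you the green points of that row sit inside some rank-$(r-2)$ flat, not that this flat is monochromatic green or that it produces a disjoint hyperplane pair. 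The argument you outline would not close.
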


\begin{proof}
    Assume the lemma fails. By Lemma~\ref{affinefullrank}, each proper flat of $AG(r-1,q)$ is either red or green but not both. By Lemma~\ref{affinenontarget}, $r(G) = r(R) = r$, so $AG(r-1,q)$ has a red hyperplane $X_1$ and a green hyperplane $Y_1$. Then there are partitions $\{X_1,X_2,\dots, X_q\}$ and $\{Y_1, Y_2,\dots, Y_q\}$ of  $E(AG(r-1,q))$ into sets \textbf{X} and \textbf{Y} of hyperplanes. By assumption, all the hyperplanes in  \textbf{X} are red and all the hyperplanes in  \textbf{Y} are green. As $X_1 \cap Y_1 \not= \emptyset$, it follows, by Lemma~\ref{affinehyperplaneint}, that $r(F_{i,j}) =r-2$ for all $i$ and $j$.

    As $X_1$ is red, at most one of $F_{1,1}, F_{1,2},\dots, F_{1,q-1}, \text{ and } F_{1,q}$ is green. Thus, we may assume that $F_{1,1}, F_{1,2},\dots, F_{1,q-2},$ and $ F_{1,q-1}$ are red. As $F_{1,1}$ is red and $Y_1$ is green, $Y_1- F_{1,1}$ will be monochromatic green. Similarly, $Y_2 - F_{1,2}$ will be monochromatic green. This implies that $r(G\cap X_2) = r(R\cap X_2) = r-1$, a contradiction. 
\end{proof}

The following technical lemmas show a relationship between the lines and planes of $AG(r-1,q)$ and the hyperplanes in \textbf{X} and \textbf{Y}. In these lemmas, when we take closures, we are doing so in the underlying affine geometry $AG(r-1,q)$.

\begin{lemma}\label{lineintersect}
Let $\textbf{X}$ and $\textbf{Y}$ be two disjoint sets each consisting of a set of hyperplanes that partition $AG(r-1,q)$. Let $x$ and $y$ be distinct elements of $E(AG(r-1,q))$ such that $|\{x,y\} \cap F_{i,j}| \leq 1$ for all $i,j$ and no member of \textbf{X} or \textbf{Y} contains $\{x,y\}$. Then $|\cl(\{x,y\})\cap X_i | = 1$ and $|\cl(\{x,y\} \cap Y_j| = 1$ for all $i$ and $j$.
\end{lemma}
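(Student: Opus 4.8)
\textbf{Proof proposal for Lemma~\ref{lineintersect}.}

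The plan is to work with the line $L = \cl(\{x,y\})$ and count its intersections with the members of $\textbf{X}$, then do the same for $\textbf{Y}$ by symmetry. First I would observe that since $\textbf{X} = \{X_1, X_2, \dots, X_q\}$ is a partition of $E(AG(r-1,q))$ into $q$ hyperplanes, the sets $L \cap X_1, L \cap X_2, \dots, L \cap X_q$ partition the point set of $L$; since $L$ has exactly $q$ points, it suffices to show that none of these intersections is empty, for then each must be a single point. So the whole lemma reduces to: no member of $\textbf{X}$ is disjoint from $L$, and symmetrically no member of $\textbf{Y}$ is disjoint from $L$.

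Next I would use Lemma~\ref{affinehyperplaneint}: two distinct hyperplanes of $AG(r-1,q)$ meet either in the empty set or in a rank-$(r-2)$ flat. Suppose some $X_i$ is disjoint from $L$. I want to reach a contradiction with the hypothesis that no member of $\textbf{X}$ or $\textbf{Y}$ contains $\{x,y\}$ and that $|\{x,y\} \cap F_{i,j}| \le 1$ for all $i,j$. The key geometric fact I would invoke is that $x$ and $y$ each lie in exactly one member of $\textbf{X}$ (since $\textbf{X}$ partitions the ground set); call them $X_a \ni x$ and $X_b \ni y$. If $a \ne b$, then $L$ meets both $X_a$ and $X_b$, and in fact $L \cap X_a = \{x\}$, $L\cap X_b = \{y\}$ because otherwise $X_a$ (or $X_b$), being a hyperplane, would contain the line $L$ and hence both $x$ and $y$, contradicting that no member of $\textbf{X}$ contains $\{x,y\}$. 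If $a = b$, then $X_a$ contains both $x$ and $y$, again contradicting the hypothesis. So $a \ne b$, and $X_a, X_b$ each meet $L$ in exactly one point. Now for any other index $i \notin \{a,b\}$: the hyperplane $X_i$ is complementary-to/disjoint-from neither forced case, and I would argue that $L \cap X_i \neq \emptyset$ by a dimension/counting argument — the $q$ sets $L \cap X_1, \dots, L \cap X_q$ partition the $q$ points of $L$, so if one were empty another would contain at least two points, forcing a hyperplane of $\textbf{X}$ to contain the line $L$ and thus $\{x,y\}$, the same contradiction. Hence every $L\cap X_i$ is nonempty, so each is a singleton, giving $|\cl(\{x,y\}) \cap X_i| = 1$ for all $i$. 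The identical argument with $\textbf{Y}$ in place of $\textbf{X}$ gives $|\cl(\{x,y\}) \cap Y_j| = 1$ for all $j$; here the hypothesis $|\{x,y\}\cap F_{i,j}| \le 1$ is what prevents $x$ and $y$ from lying in a common $Y_j$ via lying in a common cell $F_{i,j}$, and the hypothesis that no member of $\textbf{Y}$ contains $\{x,y\}$ handles the rest.

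The main obstacle, I expect, is making the "partition of $L$ into $q$ pieces, so each is a singleton" step fully rigorous: I need that a line of $AG(r-1,q)$ has exactly $q$ points, that a hyperplane either contains a given line or meets it in at most one point, and that the hyperplanes in $\textbf{X}$ genuinely partition $L$'s points (which is immediate from them partitioning $E(AG(r-1,q))$). The one subtlety is ruling out the case where some $X_i$ contains $L$: this is exactly where the hypotheses "$|\{x,y\}\cap F_{i,j}|\le 1$" and "no member of $\textbf{X}$ or $\textbf{Y}$ contains $\{x,y\}$" are used, and I would make sure to invoke both — the first to prevent $x,y$ sharing a cell $F_{i,j} = X_i \cap Y_j$ (which would put them in a common $X_i$ and a common $Y_j$), the second as the direct prohibition. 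Everything else is a clean counting argument once Lemma~\ref{affinehyperplaneint} is in hand.
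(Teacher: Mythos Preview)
Your argument is correct and is essentially the same pigeonhole reasoning as the paper's: the line $L=\cl(\{x,y\})$ has exactly $q$ points, the $q$ hyperplanes $X_i$ partition $E(AG(r-1,q))$, and any $X_i$ meeting $L$ in two points would, being a flat, contain $L$ and hence $\{x,y\}$, contrary to hypothesis; so each $|L\cap X_i|\le 1$, forcing equality, and the same works for $\textbf{Y}$. Two small clean-ups: Lemma~\ref{affinehyperplaneint} is not actually needed here (you only use that a hyperplane is a flat), and the hypothesis $|\{x,y\}\cap F_{i,j}|\le 1$ is redundant once you assume no $X_i$ or $Y_j$ contains $\{x,y\}$, so you need not invoke it separately.
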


\begin{proof}
    Clearly $|\cl(\{x,y\}) \cap X_i| \leq 1$ for all $i$, otherwise $X_i$ contains $\{x,y\}$. As $|\cl(\{x,y\})| = q$, we deduce that $|\cl(\{x,y\}) \cap X_i| = 1$ for all $i$. The lemma~follows by symmetry.
\end{proof}

\begin{lemma}\label{planeintersect}
For $q$ in $\{2,3\}$, let $\textbf{X}$ and $\textbf{Y}$ be two disjoint sets each consisting of a set of hyperplanes that partition $AG(r-1,q)$. Let $\{x,y,z\}$ be a rank-$3$ subset of $E(AG(r-1,q))$ such that $|\{x,y,z\} \cap F_{i,j}| \leq 1$ for all $i$ and $j$, and there is an $X_k$ in \textbf{X} such that $|X_k \cap \{x,y,z\}| = 2$. Then $ |\cl(\{x,y,z\}) \cap F_{i,j} | = 1$ for all $i$ and $j$.
\end{lemma}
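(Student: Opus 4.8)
The plan is to exploit the hypothesis that $\mathbf{X}$ and $\mathbf{Y}$ partition $AG(r-1,q)$ together with the constraint that $|X_k \cap \{x,y,z\}| = 2$ for some $X_k$, so two of the three points, say $x$ and $y$, lie in a common member of $\mathbf{X}$. Since $\{x,y,z\}$ has rank $3$, the affine plane $P = \cl(\{x,y,z\})$ contains $q^2$ points, and we want to show $P$ meets each cell $F_{i,j}$ of the grid in exactly one point. The key structural observation is that $P \cap X_i$ is a subflat of $P$ for each $i$ (intersection of affine flats), and these subflats partition $P$; similarly for the $Y_j$. Because $x, y \in X_k$, the set $P \cap X_k$ has rank at least $2$, hence is either all of $P$ (impossible, since then $z \in X_k$, contradicting $|X_k \cap \{x,y,z\}| = 2$) or a line $L$ of $P$ with $q$ points. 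So $P \cap X_k = L$ is a line, $z \notin L$, and the remaining $q^2 - q$ points of $P$ are distributed among the other $X_i$'s.

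**Counting within the plane.** Next I would argue that each $P \cap X_i$ with $i \neq k$ is also a line of $P$. Indeed, $z$ lies in some $X_{k'}$, and $\cl(\{x,z\})$ and $\cl(\{y,z\})$ are lines of $P$ through $z$; by hypothesis $|\{x,y,z\} \cap F_{i,j}| \le 1$, so $x, y, z$ lie in three distinct cells, which forces $x, y \in L$ and $z$ elsewhere. I claim that $\cl(\{x,z\}) \cap X_k = \{x\}$ and $\cl(\{y,z\}) \cap X_k = \{y\}$: these are instances of Lemma~\ref{lineintersect} (with $q \in \{2,3\}$ the pair-hypotheses are easy to verify), so each of these two lines meets every $X_i$ in exactly one point. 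More generally, running through the $q$ lines of $P$ that are "parallel" to $L$ (i.e., the cosets of the direction of $L$ inside $P$), each such parallel line must be contained in a single $X_i$: it meets $X_k = L$ either in $0$ or $q$ points, and since $L$ is one of the parallel classes, a parallel line distinct from $L$ meets $L$ in $0$ points, so it is disjoint from $X_k$; and each point on it lies in some $X_i$, but a line meeting $X_i$ in $\ge 2$ points lies in $X_i$. Since the $q$ parallel lines other than $L$ cover $q^2 - q$ points and there are $q - 1$ remaining hyperplanes $X_i$, $i \ne k$, each getting a union of such lines, a short count (using that each nonempty $P \cap X_i$ is a union of cosets of the same direction once one checks the direction is forced) gives $P \cap X_i$ is a single line for each $i$. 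By symmetry, $P \cap Y_j$ is a single line for each $j$.

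**Conclusion via a grid argument.** Finally, $P$ is partitioned both into the $q$ parallel lines $P \cap X_1, \dots, P \cap X_q$ and into the $q$ parallel lines $P \cap Y_1, \dots, P \cap Y_q$. If these two parallel classes were the same, then some $X_i$ and $Y_j$ would have $P \cap X_i = P \cap Y_j$, a line of size $q \ge 2$, contradicting $|\{x,y,z\} \cap F_{i,j}| \le 1$ already at the level that such a large overlap would have to contain two of our points — more carefully, one shows the two directions differ because $x,y$ determine the direction of $P \cap X_k$ while the $Y$-direction is transverse to it (each $Y_j$ meets the line $\cl(\{x,y\}) = L$ in at most one point, as no member of $\mathbf{Y}$ contains $\{x,y\}$, and $L$ has $q$ points, so $L$ meets all $q$ members of $\mathbf{Y}$, forcing the $Y$-lines to cross $L$). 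Two distinct parallel classes of lines in the affine plane $P$ of order $q$ intersect pairwise in exactly one point, so $|(P \cap X_i) \cap (P \cap Y_j)| = |P \cap F_{i,j}| = 1$ for all $i$ and $j$, which is the assertion.

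**Main obstacle.** The delicate point is establishing that each $P \cap X_i$ (and $P \cap Y_j$) is a \emph{full} line rather than possibly a proper nonempty subset or empty — equivalently, that the partition of $P$ into the traces of $\mathbf{X}$ is exactly a parallel class of lines. This is where the restriction $q \in \{2,3\}$ is used: for $q = 2$ a "line" of $P$ is just a $2$-set and the counting is immediate, and for $q = 3$ the plane $P$ has $9$ points, $L$ has $3$, and the remaining $6$ points split into $q-1 = 2$ nonempty traces each of which must therefore be a $3$-point line; larger $q$ would leave room for a trace to break into several shorter pieces, so the argument genuinely needs the hypothesis. I expect the bulk of the write-up to be this forcing step, handled by the case analysis $q = 2$ versus $q = 3$ together with Lemma~\ref{lineintersect}.
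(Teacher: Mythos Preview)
Your approach is correct and takes a genuinely different route from the paper's. The paper argues by direct element-chasing: for $q=2$ it places $x,y,z$ in specific cells and shows the fourth point of the plane must land in the remaining cell via a circuit argument; for $q=3$ it does a longer case analysis on the position of $z$, repeatedly tracking the third point on various lines to fill in the $3\times 3$ grid. Your argument is structural: each $P\cap X_i$ and $P\cap Y_j$ is a flat of the rank-$3$ affine plane $P\cong AG(2,q)$, hence of size $0,1,q,$ or $q^2$; ruling out $q^2$ and counting forces every trace to be a $q$-point line, yielding two parallel classes whose transversality (witnessed by $x,y\in L=P\cap X_k$ lying in distinct $Y_j$'s) gives $|P\cap F_{i,j}|=1$. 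This avoids the case split and is cleaner.

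Two small corrections. First, your appeal to ``symmetry'' for the $\mathbf Y$ side is not quite right, since the hypothesis places two of $\{x,y,z\}$ in a common member of $\mathbf X$, not of $\mathbf Y$. What you need instead is simply that $x$ and $y$, being in the same $X_k$ but different cells $F_{k,\cdot}$, lie in distinct $Y_j$'s, so no $Y_j$ contains $P$; then the same size count ($q$ flats of size at most $q$ partitioning $q^2$ points) forces each $P\cap Y_j$ to be a line. Second, your diagnosis of the ``main obstacle'' overstates the role of $q\in\{2,3\}$: the counting argument (terms from $\{0,1,q\}$ summing to $q^2$ over $q$ summands forces all terms equal to $q$) works for every prime power $q$, so your method actually proves the lemma without that restriction. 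The paper states it only for $q\in\{2,3\}$ because those are the cases needed downstream, and its hands-on proof is tailored to them.
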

\begin{proof}

Note that, by Lemma~\ref{affinepartition}, each $F_{i,j}$ has rank $r-2$. Let $q = 2$. We may assume that $x\in F_{1,1}$, that $y\in F_{1,2}$, and that $z \in F_{2,1}$. As $r(\{x,y,z\}) = 3$, there is exactly one point, say $e$, in $\cl(\{x,y,z\}) - \{x,y,z\}$. Assume $e\not\in F_{2,2}$. Then, by symmetry, we may assume that $e\in X_1$. As $\{e,x,y,z\}$ is a circuit, we deduce that $z\in X_1$, a contradiction.

Now assume that $q = 3$. We may assume that $x \in F_{1,1}$ and $y\in F_{1,2}$. Suppose $z\in F_{2,3}$. Consider $\cl(\{x,z\})$. The third point $e$ on this line cannot be in $X_1$, otherwise the circuit $\{e,x,z\}$ gives the contradiction that $z$ is in $X_1$. Similarly, $e$ cannot be in $X_2$, $Y_1$, or $Y_3$. Therefore, $e \in F_{3,2}$. By a similar argument, the third point on $\cl(\{y,z\})$ is in $F_{3,1}$. Continuing in this manner, we deduce that $|\cl(\{x,y,z\}) \cap X_3|= 3$. Since $|\cl(\{x,y,z\})| = 9$, using the same technique, we deduce that $|\cl(\{x,y,z\}) \cap F_{i,j}| = 1$ for all $i$ and $j$. By symmetry, we may now assume that $z \in F_{2,1}$. Then the third elements on the lines $\cl(\{x,z\}), \cl(\{x,y\}),$ and $\cl(\{y,z\})$ are in $F_{3,1}$, $F_{1,3}$, and $F_{3,3}$, respectively. Arguing as before, we again deduce that $|\cl(\{x,y,z\}) \cap F_{i,j}| = 1$ for all $i$ and $j$.
\end{proof}

\begin{lemma}\label{affineq2intersect2}
Let \textbf{X} and \textbf{Y} be two disjoint sets of each consisting of hyperplanes that partition $AG(r-1,2)$. Let $P_1 = \{w,x,y,z\}$ be a rank-$3$ flat of $AG(r-1,2)$ such that $w,x \in F_{1,2}$ and $y,z \in F_{2,1}$. Let $P_2 = \{e,f,y,z\}$ be a rank-$3$ flat of $AG(r-1,2)$ such that $e,f \in F_{2,2}$. Then $\cl(P_1 \cup P_2)$ is a rank-$4$ affine flat such that $|\cl(P_1 \cup P_2) \cap F_{i,j}| = 2$ for all $i$ and $j$. 
\end{lemma}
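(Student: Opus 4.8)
The plan is to work entirely inside $AG(r-1,2)$, where closures are very concrete: in the binary affine geometry, a rank-$4$ flat is just a coset of a rank-$4$ subspace and has $8$ points, and $\cl(S)$ for a set $S$ is obtained by repeatedly adding the fourth point of any line through two known points. First I would observe that $P_1$ and $P_2$ already share the line $\{y,z\}$, so $r(P_1 \cup P_2) \le r(P_1) + r(P_2) - r(\{y,z\}) = 3 + 3 - 2 = 4$; and since $P_2$ contains the point $e \in F_{2,2}$ while $P_1 \subseteq X_1 \cup X_2$ lies in $F_{1,2} \cup F_{2,1}$, the point $e$ is not in $\cl(P_1)$, so $r(P_1 \cup P_2) = 4$ exactly. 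Thus $\cl(P_1 \cup P_2)$ is a rank-$4$ affine flat with $8$ points, and it suffices to show each $F_{i,j}$ meets it in exactly $2$ points; since $\{F_{1,1},F_{1,2},F_{2,1},F_{2,2}\}$ partitions $E(AG(r-1,2))$ into four classes and $\cl(P_1\cup P_2)$ has $8$ points, it is enough to show each $F_{i,j}$ contributes at least $2$.

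Next I would pin down the $8$ points explicitly. We already have $w,x \in F_{1,2}$ and $y,z,e,f$ accounted for with $y,z \in F_{2,1}$ and $e,f \in F_{2,2}$; that is $6$ points in $F_{1,2} \cup F_{2,1} \cup F_{2,2}$, with $2$ already in each of $F_{2,1}$ and $F_{2,2}$, and $2$ in $F_{1,2}$. So the remaining two points of $\cl(P_1 \cup P_2)$, together with one more in $F_{1,2}$ at most, must account for $F_{1,1}$; the count forces exactly $2$ points of $F_{1,1}$ and exactly $2$ of $F_{1,2}$. To produce the $F_{1,1}$ points concretely, consider the lines $\cl(\{w,y\})$ and $\cl(\{w,z\})$ (or $\cl(\{x,y\})$, $\cl(\{x,z\})$): each has its fourth point in $\cl(P_1\cup P_2)$, and I would apply the hypotheses $w,x\in F_{1,2}=X_1\cap Y_2$, $y,z\in F_{2,1}=X_2\cap Y_1$ together with Lemma~\ref{lineintersect} or a direct circuit argument to show the fourth point of each such line lies in $X_1 \cap Y_1 = F_{1,1}$ — it cannot be in $X_1$ together with $y$, nor in $X_2$ together with $w$, nor in $Y_2$ together with $y$, nor in $Y_1$ together with $w$, which leaves $F_{1,1}$. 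This yields (at least, hence exactly) two points in $F_{1,1}$.

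It remains to handle $F_{1,2}$ itself, i.e. to rule out that a stray point of $\cl(P_1\cup P_2)$ lands there giving a third point: but this is forced by the count, since $F_{1,1}$, $F_{2,1}$, $F_{2,2}$ already supply $2+2+2 = 6$ and $F_{1,2}$ supplies $w,x$, for a total of $8 = |\cl(P_1\cup P_2)|$, so there is no room for any further point anywhere. I expect the one genuine obstacle to be the bookkeeping in the paragraph above: making sure the fourth points of the chosen cross-lines really are four \emph{distinct} points falling two-and-two correctly, rather than coinciding with points we have already listed. I would resolve this by noting that $w \ne x$ forces $\cl(\{w,y\}) \ne \cl(\{x,y\})$ and that their fourth points are distinct (else $w,x,y$ and a common fourth point would be a rank-$2$ set, contradicting $w,x\in F_{1,2}$, $y\in F_{2,1}$ being a rank-$3$ configuration as part of $P_1$), and similarly distinguishing the $\{w,z\}$-line; combined with the partition count this nails down exactly two points in each $F_{i,j}$.
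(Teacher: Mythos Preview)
Your overall strategy matches the paper's: first show $r(P_1\cup P_2)=4$ so that $\cl(P_1\cup P_2)$ has eight points, then distribute those eight points among the four cells $F_{i,j}$. The paper carries out the second step a little differently from you, observing that each of $\cl(P_1\cup P_2)\cap X_i$, $\cl(P_1\cup P_2)\cap Y_j$, and $\cl(P_1\cup P_2)\cap F_{i,j}$ is itself an affine flat and hence has cardinality a power of~$2$; together with one invocation of Lemma~\ref{planeintersect} on the plane $\cl(\{e,w,z\})$ to see that $F_{1,1}$ is met, a short count then forces every cell to contribute exactly two.

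Your execution, however, has a genuine gap at the step where you try to exhibit points in $F_{1,1}$. In $AG(r-1,2)$ a line $\cl(\{w,y\})$ consists of exactly the two points $w$ and $y$; there is no third or fourth point, and Lemma~\ref{lineintersect} for $q=2$ is the tautology that each of $w,y$ lies in a unique $X_i$ and $Y_j$. More fundamentally, no closure computed from points of $P_1$ alone can reach $F_{1,1}$, since $\cl(P_1)=P_1\subseteq F_{1,2}\cup F_{2,1}$; you must bring in $e$ or $f$. The correct object is a \emph{plane} through one point from each of three distinct cells, for instance $\cl(\{e,w,z\})$ with $e\in F_{2,2}$, $w\in F_{1,2}$, $z\in F_{2,1}$: this rank-$3$ flat has a genuine fourth point, and Lemma~\ref{planeintersect} (the $q=2$ case) places that point in $F_{1,1}$. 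With that correction your counting approach can be completed (after verifying two such fourth points are distinct), or you can bypass the distinctness bookkeeping entirely by adopting the paper's observation that the intersections are affine flats of power-of-$2$ size.
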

\begin{proof}
    As $|P_1 \cap P_2| = 2$, it follows that $r(P_1 \cup P_2) = 4$. Thus $\cl(P_1 \cup P_2)$ is a rank-$4$ affine flat. Now consider $\cl(\{e,w,z\})$. By Lemma~\ref{planeintersect}, $\cl(\{e,w,z\})$ intersects $F_{1,1}$ in an affine flat. Therefore, as $\cl(P_1 \cup P_2)$ meets each of $X_1, X_2, Y_1, Y_2, F_{1,1},F_{1,2},F_{2,1}$, and $F_{2,2}$ in an affine flat, each such intersection has $1$, $2$, or $4$ elements. Thus the lemma follows.
\end{proof}

We now prove the main results of this section. 

\begin{proof}[Proof of Theorem \ref{affinetargetq2}]
Assume $G$ is an affine target and there is a rank-$4$ affine flat $F$ such that $AG(r-1,2)|(G\cap F) \cong U_{4,4}$. Then $AG(r-1,2)|(R\cap F) \cong U_{4,4}$. This contradicts Lemma~\ref{BinaryAffineRank} as $r(G\cap F) = r(R\cap F) = 4$. Hence a binary affine target does not have $U_{4,4}$ as an induced restriction.

Let $(G,R)$ be a $2$-coloring of $AG(r-1,2)$. Suppose that $G$ is a rank-$r$ minimal affine-non-target and that $G$ does not contain $U_{4,4}$ as an induced restriction. By Lemma~\ref{BinaryDisjointHP}, $AG(r-1,2)$ has a red hyperplane $X_1$ and a green hyperplane $X_2$ such that $X_1\cap X_2 = \emptyset$. By Lemma~\ref{affinenontarget}, $r(R) = r$, so there is a red point $z \text{ in } X_2$. As $AG(r-1,2)|(R \cap X_1)$ is an affine target, in $X_1$, there is a monochromatic red rank-$(r-2)$ flat $F_{1,1}$. Observe that $\cl(F_{1,1} \cup z)$ is a red hyperplane $Y_1$ that intersects $X_1$ and $X_2$. Then there is a hyperplane $Y_2$ that is complementary to $Y_1$. By Lemma~\ref{affinehyperplaneint}, $r(F_{i,j}) = r-2$ for all $i$ and $j$. Observe that $z$ is in $F_{2,1}$. Furthermore, there is a red point $e \text{ in } F_{1,2}$ and there are green points $f$ and $g$ in $ F_{2,1}$ and $F_{2,2}$, respectively. As $r(G) = r$, there is a green point $h \text{ in }F_{1,2}$. We make the following observations.
\setcounter{theorem}{17} 
\begin{sublemma}\label{affinebinarydiag}
    $F_{1,2} \cup F_{2,1} $ is an affine hyperplane. 
\end{sublemma}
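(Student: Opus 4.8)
The plan is to recognize $F_{1,2}\cup F_{2,1}$ as the unique hyperplane through $F_{1,2}$ that is neither $X_1$ nor $Y_2$. First I would record a cardinality fact: since $F_{1,2}\subseteq X_1$ and $F_{2,1}\subseteq X_2$ with $X_1\cap X_2 = \emptyset$, the flats $F_{1,2}$ and $F_{2,1}$ are disjoint, and since each has rank $r-2$ in $AG(r-1,2)$, each has $2^{r-3}$ elements; hence $|F_{1,2}\cup F_{2,1}| = 2^{r-2}$, which is precisely the number of elements of a hyperplane of $AG(r-1,2)$. Thus it suffices to find a hyperplane $W$ of $AG(r-1,2)$ with $F_{1,2}\cup F_{2,1}\subseteq W$, for then equality of cardinalities forces $F_{1,2}\cup F_{2,1} = W$.

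The key step uses the fact that a rank-$(r-2)$ flat of $AG(r-1,2)$ is contained in exactly three hyperplanes: its projective closure is a corank-$2$ flat, which lies in exactly $q+1 = 3$ projective hyperplanes, none of them the complementary hyperplane of $AG(r-1,2)$ (as the flat is non-empty). Two of the three hyperplanes through $F_{1,2}$ are $X_1$ and $Y_2$, and these are distinct, since $X_1 = Y_2$ would give $F_{2,2} = X_2\cap Y_2 = X_2\cap X_1 = \emptyset$, contradicting that $g\in F_{2,2}$. Let $W$ be the third hyperplane through $F_{1,2}$. For each $q\in F_{2,1}$ we have $q\notin F_{1,2}$, so $\cl(F_{1,2}\cup\{q\})$ has rank $r-1$ and is therefore a hyperplane containing $F_{1,2}$; thus it is $X_1$, $Y_2$, or $W$. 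But $q\in F_{2,1}\subseteq X_2\cap Y_1$, so $q$ lies in neither $X_1$ nor $Y_2$, forcing $\cl(F_{1,2}\cup\{q\}) = W$ and, in particular, $q\in W$. Since $q\in F_{2,1}$ was arbitrary, $F_{2,1}\subseteq W$, so $F_{1,2}\cup F_{2,1}\subseteq W$, and the cardinality count completes the proof.

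The only delicate point is the observation that every element of $F_{2,1}$ lies on one and the same hyperplane $W$ through $F_{1,2}$; this is exactly where $q = 2$ is used, because then only three hyperplanes pass through $F_{1,2}$ and two of them, $X_1$ and $Y_2$, are already excluded. Everything else --- the cardinalities of flats of $AG(r-1,2)$ and the count of hyperplanes through a corank-$2$ flat --- is routine. An alternative argument avoids the cardinality bookkeeping: since $X_1,X_2$ and $Y_1,Y_2$ are pairs of complementary hyperplanes, all four are cocircuits of the binary matroid $AG(r-1,2)$, so $F_{1,2}\cup F_{2,1}$ and its complement $F_{1,1}\cup F_{2,2}$ are symmetric differences of cocircuits, hence disjoint unions of cocircuits; a short rank argument using Lemma~\ref{affinehyperplaneint} shows each is a single cocircuit, and being complementary they are both hyperplanes.
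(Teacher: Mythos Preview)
Your proof is correct and uses essentially the same idea as the paper: a rank-$(r-2)$ flat of $AG(r-1,2)$ lies in exactly three affine hyperplanes, and after identifying two of them the third is forced. The paper argues from $F_{2,1}$ (with the two known hyperplanes being $Y_1=F_{1,1}\cup F_{2,1}$ and $X_2=F_{2,1}\cup F_{2,2}$) and simply asserts the conclusion, while you argue from $F_{1,2}$ and supply the cardinality bookkeeping and the pointwise inclusion $F_{2,1}\subseteq W$ explicitly; the extra detail is welcome, though you should avoid reusing the letter $q$ for a point when it already denotes the field order.
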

Observe that $F_{2,1}$ is contained in three affine hyperplanes, two of which are $F_{1,1} \cup F_{2,1}$ and $F_{2,1}\cup F_{2,2}$. Therefore, $F_{1,2}\cup F_{2,1}$ is the third such hyperplane. Thus \ref{affinebinarydiag} holds. 

\begin{sublemma}\label{notmonogreen}
     $F_{2,2}$ is not monochromatic green.
\end{sublemma}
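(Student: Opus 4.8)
The plan is to argue by contradiction: assume $F_{2,2}$ is monochromatic green, recall that $F_{1,1}$ is monochromatic red and that (from the surrounding argument) we have $e\in R\cap F_{1,2}$, $z\in R\cap F_{2,1}$, $h\in G\cap F_{1,2}$, and $f\in G\cap F_{2,1}$, and then manufacture a rank-$4$ flat $F$ of $AG(r-1,2)$ that meets each of $F_{1,1},F_{1,2},F_{2,1},F_{2,2}$ in exactly two points, so arranged that one color class of $F$ is a $4$-element rank-$4$ flat, i.e.\ a copy of $U_{4,4}$, contradicting our standing hypothesis. The device for producing $F$ is Lemma~\ref{affineq2intersect2}: I would fix a nonzero ``direction'' $d$ (an element of the common direction space of the $F_{i,j}$, which has $2^{r-3}$ elements), take $P_1$ to be the union of the line through $e$ in $F_{1,2}$ with direction $d$ and the line through $z$ in $F_{2,1}$ with direction $d$, and $P_2$ to be the union of the line through $z$ in $F_{2,1}$ and a parallel line in $F_{2,2}$. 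Since in each case the two lines are parallel they are coplanar, so $P_1$ and $P_2$ are genuinely rank-$3$ flats and Lemma~\ref{affineq2intersect2} applies, giving a rank-$4$ flat $F=\cl(P_1\cup P_2)$ with $|F\cap F_{i,j}|=2$ for all $i,j$; in particular $F\cap F_{1,2}=\{e,e+d\}$, $F\cap F_{2,1}=\{z,z+d\}$, while $F\cap F_{1,1}$ consists of two red points and $F\cap F_{2,2}$ of two green points.

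Now, if $d$ can be chosen so that $e+d$ and $z+d$ are both green, then $R\cap F=\{e,z\}\cup(F\cap F_{1,1})$ is a $4$-element set, and since it contains a line of $F_{1,1}$ together with $e\notin F_{1,1}$ and $z$ (which lies in the opposite member of $\mathbf{X}$), it spans $F$; thus $AG(r-1,2)|(R\cap F)\cong U_{4,4}$, and because the complement of a spanning $4$-subset of $AG(3,2)$ is again spanning we also get $AG(r-1,2)|(G\cap F)\cong U_{4,4}$, the desired contradiction. The existence of a usable $d$ is a pigeonhole count: the nonzero directions $d$ with $e+d\in G\cap F_{1,2}$ are exactly in bijection with $G\cap F_{1,2}$, and crucially none of them is $0$ because $e$ is red; likewise there are $|G\cap F_{2,1}|$ nonzero directions making $z+d$ green. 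These two sets of ``good directions'' live in the $(2^{r-3}-1)$-element set of nonzero directions, so if $|G\cap F_{1,2}|+|G\cap F_{2,1}|\ge 2^{r-3}$ they must overlap and a suitable $d$ exists.

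To finish I would run the mirror construction when the green count is too small: build $P_1$ from the lines through $h$ in $F_{1,2}$ and through $f$ in $F_{2,1}$ (direction $d$) and $P_2$ from the $F_{2,1}$-line together with a parallel $F_{2,2}$-line, and choose $d$ so that $h+d$ and $f+d$ are both red; then $G\cap F=\{h,f\}\cup(F\cap F_{2,2})$ is a spanning $4$-element set, again a $U_{4,4}$. This succeeds provided $|R\cap F_{1,2}|+|R\cap F_{2,1}|\ge 2^{r-3}$. By sublemma~\ref{affinebinarydiag}, $F_{1,2}\cup F_{2,1}$ is a hyperplane, so $|G\cap(F_{1,2}\cup F_{2,1})|+|R\cap(F_{1,2}\cup F_{2,1})|=2^{r-2}=2\cdot 2^{r-3}$; hence at least one of the two sums is at least $2^{r-3}$, and one of the two constructions goes through, producing a forbidden $U_{4,4}$ induced restriction. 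Therefore $F_{2,2}$ is not monochromatic green.

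The only real obstacle is controlling the ``extra'' intersection points $e+d,\,z+d$ (respectively $h+d,\,f+d$), since their colors are not a priori determined; the key observation that resolves it is that translating the green part of $F_{1,2}$ or $F_{2,1}$ by the red base point automatically avoids the zero direction, which is precisely what makes the pigeonhole count tight (and forces the two ``good direction'' sets to meet once their sizes sum to $2^{r-3}$). A minor point to verify along the way is that the two parallel-line configurations really are rank-$3$ flats, so that Lemma~\ref{affineq2intersect2} is legitimately applicable, and the small fact that in $AG(3,2)$ the complement of a spanning $4$-set is spanning (equivalently, it is not a hyperplane), which lets us pass freely between a red $U_{4,4}$ and a green $U_{4,4}$.
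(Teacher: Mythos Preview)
Your argument is correct, and it is genuinely different from the paper's. The paper proceeds by first observing that, since $G$ is a minimal affine-non-target, the hyperplane $F_{1,2}\cup F_{2,1}$ carries an affine target; hence one colour class there is contained in a rank-$(r-2)$ flat $F$. A red/green symmetry (available because $F_{1,1}$ is monochromatic red while $F_{2,2}$ is assumed monochromatic green) lets them assume $G\cap(F_{1,2}\cup F_{2,1})\subseteq F$, and they then hand-build two planes $P_1=\cl(\{f,h,x\})$ and $P_2=\cl(\{f,g,y\})$ whose union spans the required rank-$4$ flat with a green $U_{4,4}$.

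Your route replaces the use of the target structure on $F_{1,2}\cup F_{2,1}$ by a translation/pigeonhole argument in the common direction space $H_X\cap H_Y$ of the $F_{i,j}$: you only need the cardinality identity $|G\cap(F_{1,2}\cup F_{2,1})|+|R\cap(F_{1,2}\cup F_{2,1})|=2^{r-2}$, together with the fact that the base points $e,z$ (resp.\ $h,f$) are of the ``wrong'' colour so that the good-direction sets avoid~$0$. This is slightly more elementary in that it never invokes the minimal-non-target hypothesis at this step, at the cost of introducing affine coordinates that the paper otherwise avoids. The paper's version, by contrast, keeps the argument purely matroidal and reuses the same flat $F$ later. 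Both reach the same endgame via Lemma~\ref{affineq2intersect2}, and your observation that in $AG(3,2)$ the complement of a spanning $4$-set is again spanning cleanly converts a red $U_{4,4}$ into a green one.
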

Assume that $F_{2,2}$ is monochromatic green. Then $Y_2$ is green. By \ref{affinebinarydiag}, $F_{1,2} \cup F_{2,1}$ is an affine hyperplane, so both $AG(r-1,2)|(G\cap (F_{1,2} \cup F_{2,1}))$ and $AG(r-1,2)|(R\cap (F_{1,2} \cup F_{2,1}))$  are affine targets. Then there is a rank-$(r-2)$ affine flat $F$ such that either $G \cap (F_{1,2} \cup F_{2,1}) \subseteq F$ or $R \cap (F_{1,2} \cup F_{2,1}) \subseteq F$. Because we currently have symmetry between the red and green subsets of $AG(r-1,2)$, we may assume the former. Then $f$ and $h$ are in $F$. Let $x$ be a red point in $F_{1,2} - F$. Let $P_1 = \cl(\{f,h,x\})$. The fourth point $y$ on this plane is in $Y_1$, otherwise the circuit $\{f,h,x,y\}$ gives the contradiction that $f \in Y_2$. Moreover, $y\not\in F$, otherwise the circuit $\{f,h,x,y\}$ gives the contradiction that $x \in F$. Thus $y\in F_{1,2} - F$, so $y$ is red. Let $P_2 = \cl(\{f,g,y\})$. Then the fourth point $g'$ on this plane is in $F_{2,2}$, so $g'$ is green. By Lemma~\ref{affineq2intersect2}, $r(\cl(P_1 \cup P_2)) = 4$. Let $\{s,t\} = \cl(P_1 \cup P_2) - \{f,g,g',h,x,y\}$. Then, by Lemma~\ref{affineq2intersect2}, $s$ and $t$ in $F_{1,1}$, so both points are red. Therefore, $r(G \cap \cl(P_1 \cup P_2)) = r( R\cap \cl(P_1 \cup P_2)) = 4$, so $AG(r-1,2)|\{f,g,g',h\} \cong U_{4,4}$. We conclude that $AG(r-1,2)|G$ has $U_{4,4}$ as an induced restriction, a contradiction. Thus \ref{notmonogreen} holds.

The affine hyperplane $F_{1,2} \cup F_{2,1}$ is either green, red, or half-green and half-red. 
\begin{sublemma} \label{notred}
    $F_{1,2} \cup F_{2,1}$ is not red
\end{sublemma}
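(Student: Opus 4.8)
The plan is to mimic the structure of the proof of \ref{notmonogreen}, deriving a $U_{4,4}$-induced restriction from the assumption that $F_{1,2}\cup F_{2,1}$ is red, which will contradict our standing hypothesis that $G$ has no $U_{4,4}$ induced restriction. Suppose for contradiction that $F_{1,2}\cup F_{2,1}$ is red, i.e. $r(R\cap(F_{1,2}\cup F_{2,1})) = r-1$. Since $f$ and $g$ are green (points in $F_{2,1}$ and $F_{2,2}$ respectively) and $h$ is green in $F_{1,2}$, I would first locate enough green points inside the red hyperplane $F_{1,2}\cup F_{2,1}$: we already have $f\in F_{2,1}$ and $h\in F_{1,2}$ green, and since $r(G) = r$ while this hyperplane is red, the green points of $F_{1,2}\cup F_{2,1}$ lie in a rank-$(r-2)$ affine flat $F$. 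The element $z\in F_{2,1}$ is red and $e\in F_{1,2}$ is red, so not all of $F_{1,2}$ or $F_{2,1}$ is monochromatic.

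Next I would build a rank-$4$ flat meeting each $F_{i,j}$ in exactly two points, exactly as in \ref{notmonogreen}, but now with the colors arranged so that two rows/columns of the $4\times 4$ point pattern are green and two are red. Concretely: take the green points $f\in F_{2,1}$, $h\in F_{1,2}$; take a red point $x\in F_{1,2}$ (e.g. $x = e$) not in the green flat $F$, and form $P_1 = \cl(\{f,h,x\})$. As before, the fourth point $y$ of $P_1$ lies in $Y_1$ (else $\{f,h,x,y\}$ forces $f\in Y_2$) and $y\notin F$ (else $x\in F$), hence $y\in F_{1,2}-F$, so $y$ is red. Then, using the red point $z\in F_{2,1}$, form $P_2 = \cl(\{f,y,z\})$; its fourth point $z'$ lies in $F_{1,1}$. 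Here I need $z'$ to be red; if $F_{1,1}$ is a red flat this is automatic, and one should check $F_{1,1}$ is red in this case — $F_{1,1}$ is red by construction ($F_{1,1}$ was chosen monochromatic red). Then $P_2$ has all four points red. By Lemma~\ref{affineq2intersect2} (applied with the appropriate relabeling, since $P_1$ has two points in $F_{1,2}$ and two in $Y_1$, and $P_2$ shares the two points $\{f,z\}$ appropriately positioned), $\cl(P_1\cup P_2)$ is a rank-$4$ flat meeting each $F_{i,j}$ in exactly two elements, and the two as-yet-unnamed points $\{s,t\} = \cl(P_1\cup P_2)-\{f,h,x,y,z,z'\}$ lie in $F_{2,2}$, hence are green. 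Then among the eight points of $\cl(P_1\cup P_2)$ we have exactly four green ($f,h,s,t$) and four red ($x,y,z,z'$), each set of rank $4$, so $AG(r-1,2)|\{f,h,s,t\}\cong U_{4,4}$, the desired contradiction.

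The main obstacle I anticipate is getting the incidence bookkeeping in Lemma~\ref{affineq2intersect2} to line up: that lemma is stated for a very specific placement of $P_1$ and $P_2$ (two points in $F_{1,2}$, two in $F_{2,1}$ for $P_1$; two in $F_{2,2}$ for $P_2$, sharing the $F_{2,1}$ pair). In the red case the natural first plane $P_1$ I want has two green points spread across $F_{1,2}$ and $F_{2,1}$ and two red points, so I may need to first pass to an auxiliary plane (as \ref{notmonogreen} does, replacing $P_1 = \cl(\{f,h,x\})$) to get a plane sitting entirely inside one of the $F_{i,j}$-pairs with a controlled color pattern, and only then invoke Lemma~\ref{affineq2intersect2}. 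A second, smaller point to verify is that the relevant "missing" points genuinely land in a monochromatic $F_{i,j}$ — this uses that $F_{1,1}$ is monochromatic red (true by its choice) and requires knowing that $F_{2,2}$ is not monochromatic red, which follows from \ref{notmonogreen} together with the assumption the lemma is trying to contradict. Once these placements are pinned down, the contradiction drops out exactly as in the preceding sublemma.

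\setcounter{theorem}{19}
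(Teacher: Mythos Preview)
Your construction has a bookkeeping error that is fatal, not merely cosmetic. With $f\in F_{2,1}$, $h\in F_{1,2}$, and $x\in F_{1,2}$, the fourth point $y$ of $P_1=\cl(\{f,h,x\})$ lies in $Y_1$ (as you note) \emph{and} in $X_2$, hence $y\in F_{2,1}$, not $F_{1,2}$; indeed $F_{1,2}\subseteq Y_2$, so ``$y\in Y_1$'' and ``$y\in F_{1,2}$'' are already incompatible. With the correct location $y\in F_{2,1}$, your second plane $P_2=\cl(\{f,y,z\})$ has all three generators in $F_{2,1}$, so $P_2\subseteq F_{2,1}$ and Lemma~\ref{affineq2intersect2} cannot be invoked. (Also, $f$ is green, so $P_2$ does not have ``all four points red.'') Even setting these issues aside, your endgame needs the two leftover points $s,t$ to land in a monochromatic green region; you propose $F_{2,2}$, but \ref{notmonogreen} is exactly the statement that $F_{2,2}$ is \emph{not} monochromatic green, so you have no guarantee that $s,t$ are green. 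Your ``obstacle'' paragraph anticipates trouble with the incidence pattern of Lemma~\ref{affineq2intersect2}, but the deeper problem is that under the hypothesis ``$F_{1,2}\cup F_{2,1}$ is red'' there is no evident monochromatic green $F_{i,j}$ to aim at, so a direct $U_{4,4}$ build in the style of \ref{notmonogreen} has no obvious target.

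The paper takes a much shorter, purely structural route that avoids building a $U_{4,4}$ altogether: it reduces to \ref{notmonogreen}. Since $F_{1,2}\cup F_{2,1}$ is red, Lemma~\ref{BinaryAffineRank} forces one of $F_{1,2},F_{2,1}$ to be red. If $F_{2,1}$ is red, then (because $X_2$ is green) any red point in $F_{2,2}$ would make $r(R\cap X_2)=r-1$, so $F_{2,2}$ is monochromatic green, contradicting \ref{notmonogreen}. Otherwise $F_{1,2}$ is red; then $F_{2,1}$ cannot be green (else both colours have rank $r-1$ in $F_{1,2}\cup F_{2,1}$), so $F_{2,1}$ is half-green and half-red; Lemma~\ref{BinaryAffineRank} applied to the green $X_2$ then makes $F_{2,2}$ green, whence $Y_2$ is green, and now the red $F_{1,2}$ forces $F_{2,2}$ to be monochromatic green, again contradicting \ref{notmonogreen}. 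This two-case reduction is what you should aim for here.
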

Assume that $F_{1,2} \cup F_{2,1}$ is red. Then, by Lemma~\ref{BinaryAffineRank}, at least one of $F_{1,2}$ and $F_{2,1}$ will be red. Assume that $F_{2,1}$ is red. As $X_2$ is green, $F_{2,2}$ is monochromatic green, otherwise $r(R \cap X_2) = r-1$. By \ref{notmonogreen}, we deduce that $F_{2,1}$ is not red. Thus $F_{1,2}$ is red. Observe that if $F_{2,1}$ is green, then $r(G \cap (F_{1,2} \cup F_{2,1})) = r(R \cap (F_{1,2} \cup F_{2,1})) = r-1$, a contradiction. Thus, $F_{2,1}$ is half-green and half-red. As $X_2$ is green, by Lemma~\ref{BinaryAffineRank}, $F_{2,2}$ is green. Therefore $r( G \cap (F_{2,2}\cup h)) = r-1$, so $Y_2$ is green. As $F_{1,2}$ is red, $F_{2,2}$ is monochromatic green, a contradiction to \ref{notmonogreen}. Therefore, \ref{notred} holds.

Since $F_{1,2} \cup F_{2,1}$ is not red, there is a monochromatic green flat $Z$ of rank $r-2$ that is contained in $F_{1,2} \cup F_{2,1}$. Because neither $F_{1,2}$ nor $F_{2,1}$ is monochromatic green, $Z$ meets $F_{1,2}$ and $F_{2,1}$ in monochromatic green flats, $Z_{1,2}$ and $Z_{2,1}$, of rank $r-3$. Similarly, as $X_2$ is green, there is a monochromatic green flat $V$ of rank $r-2$ that is contained in $X_2$. Because neither $F_{2,1}$ nor $F_{2,2}$ is monochromatic green, $V$ meets $F_{2,1}$ and $F_{2,2}$ in monochromatic green flats, $V_{2,1}$ and $V_{2,2}$, of rank $r-3$.

In the next part of the argument, we shall use the observation that if $Y_2$ is green, then we have symmetry between $(X_1,X_2)$ and $(Y_1,Y_2)$.

\begin{sublemma}\label{notgreen}
    $F_{1,2} \cup F_{2,1}$ is not green.
\end{sublemma}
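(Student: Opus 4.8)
The plan is to suppose for contradiction that $F_{1,2}\cup F_{2,1}$ is green and then, as in the proof of sublemma~\ref{notmonogreen}, to manufacture a rank-$4$ flat whose green restriction is $U_{4,4}$, contradicting our standing hypothesis. The first step localizes the red points of the hyperplane $F_{1,2}\cup F_{2,1}$. Since $F_{1,2}\cup F_{2,1}$ is an affine hyperplane, $AG(r-1,2)|(R\cap(F_{1,2}\cup F_{2,1}))$ is an affine target; as its green rank is $r-1$, Lemma~\ref{BinaryAffineRank} forces $r\bigl(R\cap(F_{1,2}\cup F_{2,1})\bigr)\le r-2$, so $\cl\bigl(R\cap(F_{1,2}\cup F_{2,1})\bigr)$ lies in a rank-$(r-2)$ flat $F$, which we may take inside $F_{1,2}\cup F_{2,1}$. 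Then every red point of $F_{1,2}\cup F_{2,1}$, in particular $e$ and $z$, lies in $F$, while $F_{1,2}-F$ and $F_{2,1}-F$ are entirely green. Because $z\in F$ and $e\in F$ we have $F\neq F_{1,2}$ and $F\neq F_{2,1}$, so $F\cap F_{1,2}$ and $F\cap F_{2,1}$ each have rank $r-3$; hence $F_{1,2}-F$ and $F_{2,1}-F$ are monochromatic green affine flats of rank $r-3$, and $R\cap F_{1,2}\subseteq F\cap F_{1,2}$, so in particular $F_{1,2}$ is not a red flat. I would also record that $r\ge 5$ here: applying the ``moreover'' clause of Lemma~\ref{BinaryAffineRank} to the green hyperplane $X_2$ and the disjoint pair $\{F_{2,1},F_{2,2}\}$ shows that $F_{2,1}$ or $F_{2,2}$ is green, which for $r=4$ is impossible since $z\in R\cap F_{2,1}$ and, by sublemma~\ref{notmonogreen}, $F_{2,2}$ is not monochromatic green. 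Thus all of $F\cap F_{1,2}$, $F_{2,1}-F$, $Z_{1,2}$, $V_{2,1}$, $V_{2,2}$ have rank at least $2$ and contain lines.

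The second step builds the $U_{4,4}$ via Lemma~\ref{affineq2intersect2}. I would choose a line $\ell=\{y,z\}$ inside the monochromatic green flat $V_{2,1}\subseteq F_{2,1}$ and set $P_2=\cl(\{y,z,g\})$ for a green point $g\in V_{2,2}$; by Lemma~\ref{planeintersect} the fourth point $g'$ of $P_2$ lies in $F_{2,2}$, and since $\{y,z,g\}\subseteq V$ and $V$ is a flat, $P_2\subseteq V$, so $\{g,g'\}$ is a green line of $F_{2,2}$. I would then choose $P_1=\cl(\{y,z,w\})$ with $w$ a red point of $F\cap F_{1,2}$; again by Lemma~\ref{planeintersect} its fourth point $w'$ lies in $F_{1,2}$, and I would arrange, by choosing $\ell$ (equivalently the direction $y-z$ inside the direction space of $V_{2,1}$) so that its direction is tangent to the rank-$(r-3)$ flat $F\cap F_{1,2}$ and so that the induced pairing of $F\cap F_{1,2}$ contains a red–red pair, that $w'$ is again red, i.e. $\{w,w'\}$ is a red line of $F_{1,2}$. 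Now $P_1$ and $P_2$ meet exactly in the line $\{y,z\}\subseteq F_{2,1}$, $P_1$ has its other two points $\{w,w'\}$ in $F_{1,2}$, and $P_2$ has its other two points $\{g,g'\}$ in $F_{2,2}$; Lemma~\ref{affineq2intersect2} then gives that $\cl(P_1\cup P_2)$ is a rank-$4$ affine flat meeting each of $F_{1,1},F_{1,2},F_{2,1},F_{2,2}$ in two points, the two points in $F_{1,1}$ being red because $F_{1,1}$ is monochromatic red. Hence among the eight points of $\cl(P_1\cup P_2)$ the green ones are exactly $y,z,g,g'$, and these four are independent (two disjoint lines lying in different cells of the rank-$4$ flat), so $AG(r-1,2)|\{y,z,g,g'\}\cong U_{4,4}$ — the desired contradiction.

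The main obstacle is the colour bookkeeping in the second step: one must be sure that the line $\ell\subseteq V_{2,1}$ can be chosen so that the companion line $\{w,w'\}$ in $F_{1,2}$ is monochromatic red. This is where a short auxiliary argument is needed. Using that the cells $F_{1,2}$ and $F_{2,1}$ are parallel affine flats (so they share a direction space), translation by any direction of $V_{2,1}$ pairs the points of $F_{1,2}$; if for every such direction the induced pairing of $F_{1,2}$ were monochromatic, then the $2$-colouring of $F_{1,2}$ would be constant on the cosets of the direction space of $V_{2,1}$, forcing both $G\cap F_{1,2}$ and $R\cap F_{1,2}$ to have rank at most $r-4$, so $F_{1,2}$ would be neither green nor half-green-half-red, hence red by Lemma~\ref{BinaryAffineRank} — contradicting the conclusion of the first step that $F_{1,2}$ is not red. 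Therefore a good choice of $\ell$ exists (and, restricting attention to directions lying in $F\cap F_{1,2}$, the same dichotomy forces $F\cap F_{1,2}$ to be monochromatic red in the degenerate subcase, again supplying the required red–red pair). I expect this direction-space matching, together with the bookkeeping that reconciles the possibility that $F_{1,2}$ is a green flat rather than half-green-half-red, to be the only genuinely delicate part; once it is in place the construction above goes through.
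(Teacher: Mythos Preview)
Your construction does not produce a $U_{4,4}$. You define $P_2=\cl(\{y,z,g\})$ and take $g'$ to be its fourth point, so $\{y,z,g,g'\}$ \emph{is} the plane $P_2$; these four points are coplanar and $AG(r-1,2)|\{y,z,g,g'\}$ is the $4$-circuit $U_{3,4}$, not $U_{4,4}$. Thus the induced restriction of $G$ to the rank-$4$ flat $\cl(P_1\cup P_2)$ is $U_{3,4}$, which does not contradict the standing hypothesis. The claim ``two disjoint lines lying in different cells'' does not give independence: your two green lines $\{y,z\}$ and $\{g,g'\}$ both lie in $V$ and in fact span only the plane $P_2$. The paper avoids this trap by arranging the four green points in \emph{three} cells rather than two: after reducing to the case that $F_{2,1}$ is green, it shares a green line $\{g_1,g_3\}\subseteq F_{2,1}$ between $P_1$ and $P_2$, then picks up a third green point $g_2\in F_{2,2}$ from $P_1$ and a fourth green point $g_4\in F_{1,2}$ from $P_2$; since $g_4\in X_1$ while $g_1,g_2,g_3\in X_2$, coplanarity is impossible. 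Forcing $g_4$ to land in a green part of $F_{1,2}$ is exactly what drives the case split on whether $Z_{2,1}=V_{2,1}$ and, when they differ, on the colour of $Y_2$.

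Your auxiliary argument is also off. The negation of ``some direction of $V_{2,1}$ yields a red--red pair in $F_{1,2}$'' is not ``every induced pairing is monochromatic''; it is ``for every such direction, every red point of $F_{1,2}$ is paired with a green point''. Since the direction space of $V_{2,1}$ has codimension one in that of $F_{1,2}$, this only gives that each of the two cosets contains at most one red point, so $|R\cap F_{1,2}|\le 2$. That is perfectly compatible with $F_{1,2}$ being green (indeed with everything established so far), so no contradiction follows. Finally, your appeals to Lemma~\ref{planeintersect} are misplaced: you have two of the three generating points in the same cell $F_{2,1}$, violating that lemma's hypothesis; the conclusions you want (``the fourth point lies in $F_{2,2}$'', etc.) are true but require a direct circuit argument instead.
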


Assume that $F_{1,2} \cup F_{2,1}$ is green. Then, by Lemma~\ref{BinaryAffineRank}, $F_{2,1}$ or $F_{1,2}$ is green. But the latter case implies that $Y_2$ is green, so this case can be reduced to the former by the symmetry between $(X_1,X_2)$ and $(Y_1,Y_2)$ noted above. Thus we may assume that $F_{2,1}$ is green.

Now $Z_{2,1}$ and $V_{2,1}$ are rank-$(r-3)$ monochromatic green flats that are both contained in $F_{2,1}$. Suppose $Z_{2,1} = V_{2,1}$. As $F_{2,1}$ is green, there is a  green element $g_1$ in $F_{2,1} - Z_{2,1}$. Since $F_{2,2}$ is not monochromatic green, there is a red point $u_1$ in $F_{2,2} - V_{2,2}$. Take a green point $g_2$ in $V_{2,2}$ and let $P_1 = \cl(\{g_1,g_2,u_1\})$. Let the fourth point on this plane be $g_3$. Then $g_3 \in F_{2,1}$, otherwise the circuit $\{g_1,g_2,g_3,u_1\}$ implies that $g_1 \in F_{2,2}$, a contradiction. Likewise, $g_3 \in V_{2,1}$, otherwise $g_2\not\in V$, a contradiction. Because $X_1$ is red, there is a red point $u_2$ in $F_{1,2} - Z_{1,2}$. Let $P_2 = \cl(\{g_1,g_3,u_2\})$. Let $g_4$ be the fourth point on this plane. Then the circuit $\{g_1,g_3,g_3,u_2\}$ implies that $g_4~\in~Z_{1,2}$, so $g_4$ is green. By Lemma~\ref{affineq2intersect2}, $\cl(P_1 \cup P_2)$ is a rank-$4$ affine flat having two points, $s$ and $t$, in $F_{1,1}$. We see that $AG(r-1,2)|\{g_1,g_2,g_3,g_4\} \cong U_{4,4}$. Thus $G$ has $U_{4,4}$ as an induced restriction, a contradiction. Thus $Z_{2,1} \not= V_{2,1}$.

Now $Y_2$ contains the monochromatic green flats $Z_{1,2}$ and $V_{2,2}$, each of which has rank $r-3$. Thus $Y_2$ is green, or $Y_2$ is half-green and half-red. Assume the latter. Then $Z_{1,2} \cup V_{2,2}$ is a monochromatic green flat of rank $r-2$ and $Y_2-(Z_{1,2} \cup V_{2,2})$ is a monochromatic red flat of rank $r-2$. As before, we take $u_1$ to be a red point in $F_{2,2}$. Choose $g_1$ to be a point in $V_{2,2}$. Then $g_1$ is green. Let $g_2$ be a point in $Z_{2,1} - V_{2,1}$, so $g_2$ is green. Let $P_1 = \cl(\{g_1,g_2,u_1\})$ and let $g_3$ be the fourth point in $P_1$. Then $g_3 \in F_{2,1}$ and $g_3 \in V$. Thus $g_3 \in V_{2,1}$, so $g_3$ is green. Choose $u_2$ in $F_{1,2} - Z_{1,2}$. Then $u_2$ is red. Let $P_2 = \cl(\{g_1,u_1,u_2\})$ and let $g_4$ be the fourth point in $P_2$. Then $g_4 \in F_{1,2}$ and $g_4 \in Z_{1,2} \cup V_{2,2}$, so $g_4 \in Z_{1,2}$. Thus $g_4$ is green. By Lemma \ref{affineq2intersect2}, $\cl(P_1 \cup P_2)$ is a rank-$4$ affine flat that meets $F_{1,1}$ in two elements, both of which are red. Moreover, $AG(r-1,2)|\{g_1,g_2,g_3,g_4\} \cong U_{4,4}$, a contradiction. 

We now know that $Y_2$ is green. Then there is a monochromatic green flat $W$ of rank $r-2$ such that $W \subseteq Y_2$. As neither $F_{1,2}$ nor $F_{2,2}$ is monochromatic green, $W \cap F_{1,2}$ and $W \cap F_{2,2}$ are monochromatic green flats, $W_{1,2}$ and $W_{2,2}$, of rank $r-3$. We choose $u_1$ to be a red point in $F_{2,2} - (V_{2,2} \cup W_{2,2})$. Choose $g_1$ in $V_{2,2} \cup W_{2,2}$. Then $g_1$ is green. Choose $g_2$ in $Z_{2,1} - V_{2,1}$. Then $g_2$ is green. The fourth point $g_3$ of the plane $P_1$ that equals $\cl(\{g_1,g_2,u_1\})$ is in $F_{2,1} \cap V$; that is, $g_3 \in V_{2,1}$, so $g_3$ is green. Now let $u_2$ be a red point in $F_{1,2} - (Z_{1,2} \cup W_{1,2})$. The fourth point $g_4$ on the plane $P_2$ that equals $\cl(\{g_1,u_1,u_2\})$ is in $F_{1,2} \cap W$, so it is in $W_{2,1}$ and hence is green. Then, by Lemma~\ref{affineq2intersect2}, $\cl(P_1\cup P_2)$ is a rank-$4$ affine flat that contains exactly four green points $g_1,g_2,g_3,$ and $g_4$. Since $AG(r-1,2)|\{g_1,g_2,g_3,g_4\} \cong U_{4,4}$, we have a contradiction. We conclude that \ref{notgreen} holds.

By \ref{notred} and \ref{notgreen}, we must have that $F_{1,2} \cup F_{2,1}$ is half-green and half-red. As $Z$ is a monochromatic green flat of rank $r-2$ that is contained in $F_{1,2} \cup F_{2,1}$, we deduce that $(F_{1,2} \cup F_{2,1}) - Z$ is a monochromatic red flat of rank $r-2$. Moreover, $F_{1,2}- Z$ and $F_{2,1}- Z$ are monochromatic red flats of rank $r-3$. Thus $V_{2,1} = Z_{2,1}$. As $X_2$ is green, there is a green point $g_1$ in $F_{2,2} - V$. Take $g_2$ to be a point in $V_{2,2}$ and let $u_1$ be a point in $F_{2,1} - V_{2,1}$. Let $P_1 = \cl(\{g_1,g_2,u_1\})$. The fourth point $g_3$ on this plane is in $F_{2,2}$ and in $V$ so it is in $V_{2,2}$ and hence it is green. Let $u_2$ be a point in $F_{1,2} - Z_{1,2}$. Then $u_2$ is red. Let $P_2 = \cl(\{g_3,u_1,u_2\})$. The fourth point $g_4$ on this plane is in $F_{1,2} \cap Z$, so it is green. By Lemma~\ref{affineq2intersect2}, $\cl(P_1 \cup P_2)$ is a rank-$4$ affine flat that contains exactly four green points, $g_1,g_2,g_3,$ and $g_4$. Moreover, $AG(r-1,2)|\{g_1,g_2,g_3,g_4\} \cong U_{4,4}$, a contradiction. We conclude that the theorem holds.
\end{proof}

\begin{proof}[Proof of Theorem \ref{affinetargetq3}]
Assume that $G$ is an affine target over $GF(3)$ such that there is an affine flat $F$ for which $AG(r-1,3)|(G\cap F)$ is one of $U_{3,3}, U_{3,4}, U_{2,3}\oplus U_{1,1}, U_{2,3} \oplus_2 U_{2,4}, P(U_{2,3},U_{2,3}),$ or $\mathcal{W}^3$. Then $r(G\cap F) = r(R\cap F) = 3$, contradicting Lemma~\ref{affinefullrank}.  
  
Let $(G,R)$ be a $2$-coloring of $AG(r-1,3)$. Suppose that $G$ is a rank-$r$ minimal affine-non-target. Then $r(G) \geq 3$. If $r(G) = 3$, then, by Lemma~\ref{affinenontarget}, $r(R) = 3$. One can now check that $AG(r-1,3)|G$ is one of $U_{3,3}, U_{3,4}, U_{2,3}\oplus U_{1,1}, U_{2,3} \oplus_2 U_{2,4}, P(U_{2,3},U_{2,3}),$ or $\mathcal{W}^3$. Thus we may assume $r(G) \geq 4$ and that $G$ does not contain a rank-$3$ flat $F$ such that $r(G\cap F) = r(R\cap F) = 3$. By Lemma~\ref{affinenontarget}, $r(R) = r$. Now, by Lemma~\ref{qaryDisjointHP}, there is a green hyperplane $X_1$ and a red 
hyperplane $X_2$ that are disjoint. Let $\{X_1,X_2,X_3\}$ and $\{Y_1,Y_2,Y_3\}$ be distinct sets, \textbf{X} and \textbf{Y}, each consisting of three disjoint hyperplanes in $AG(r-1,3)$. Then, by Lemma~\ref{affinehyperplaneint}, $r(F_{i,j}) = r -2$ for all $i$ and $j$. We proceed by showing there are no possible colorings of the hyperplanes in \textbf{Y}. 
\setcounter{theorem}{18}
\begin{sublemma}\label{affineq3case1}
 If $F_{1,1}$ and $F_{1,2}$ are green, then $Y_1$ or $Y_2$ is green.
\end{sublemma}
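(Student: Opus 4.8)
The plan is to argue by contradiction: assume $F_{1,1}$ and $F_{1,2}$ are both green but $Y_1$ and $Y_2$ are both red. Since $F_{1,1} \subseteq Y_1$ is green and $Y_1$ is red, Lemma~\ref{affinefullrank} (applied inside $Y_1$) forces all points of $Y_1 - F_{1,1} = F_{2,1} \cup F_{3,1}$ to lie in a single rank-$(r-2)$ red flat; in particular $r(R \cap Y_1) = r-1$ with the red points of $Y_1$ confined to a hyperplane of $Y_1$ missing the green flat $F_{1,1}$. The only hyperplane of $Y_1$ disjoint from $F_{1,1}$ in $AG(r-1,3)$ is... well, there isn't one disjoint, but there is a complementary structure; more precisely, since $AG(r-1,3)|(R \cap Y_1)$ is an affine target of rank $r-1$ that avoids the green flat $F_{1,1}$, its projective closure meets $\cl_P(F_{1,1})$'s complement appropriately. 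The cleaner route: $F_{1,1}$ green and $Y_1$ red means, by Lemma~\ref{affinefullrank} inside $Y_1$, that $R \cap Y_1$ has rank $r-1$; and since $G \cap Y_1 \supseteq F_{1,1}$ which already has rank $r-2$, the green points of $Y_1$ outside $F_{1,1}$ together with $F_{1,1}$ cannot reach rank $r-1$, so \emph{every} point of $Y_1 - F_{1,1}$ is red, i.e. $F_{2,1}$ and $F_{3,1}$ are monochromatic red. Symmetrically, $F_{2,2}$ and $F_{3,2}$ are monochromatic red.

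Next I would pull in the third column of \textbf{Y}, namely $Y_3$, which partitions as $F_{1,3} \cup F_{2,3} \cup F_{3,3}$. Consider the hyperplane $X_2$, which decomposes as $F_{2,1} \cup F_{2,2} \cup F_{2,3}$; we have just shown $F_{2,1}$ and $F_{2,2}$ are monochromatic red, and $X_2$ was already chosen red, consistent so far. Now look at $X_1 = F_{1,1} \cup F_{1,2} \cup F_{1,3}$: it is green, $F_{1,1}$ and $F_{1,2}$ are green, so that is consistent for any coloring of $F_{1,3}$. The contradiction must come from $Y_3$. Since $F_{3,1}$ and $F_{3,2}$ are monochromatic red and both lie in $X_3$, and $F_{2,1}, F_{2,2}$ monochromatic red lie in $X_2$, I would examine lines through pairs of these red flats together with a green point of $F_{1,1}$ or $F_{1,2}$, using Lemma~\ref{lineintersect} and Lemma~\ref{planeintersect} to locate the third points. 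In particular, take a green point $a \in F_{1,1}$, a green point $b \in F_{1,2}$, and red points $c \in F_{2,1}$ and $d \in F_{2,2}$ chosen so that $\{a,b,c,d\}$ is suitably generic; the plane $\cl(\{a,b,c\})$ (or a carefully chosen rank-$3$ flat) should be forced by Lemma~\ref{planeintersect} to have its remaining points distributed one per $F_{i,j}$, and tracking which points are green versus red should produce a rank-$3$ flat $F$ with $r(G \cap F) = r(R \cap F) = 3$ — contradicting our standing assumption that $G$ has no such induced restriction (equivalently, no $U_{3,3}$, $U_{3,4}$, $U_{2,3}\oplus U_{1,1}$, etc.).

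More concretely, here is the configuration I would aim to produce. With $F_{2,1}, F_{2,2}, F_{3,1}, F_{3,2}$ all monochromatic red and $F_{1,1}, F_{1,2}$ green, pick $a_1, a_2 \in F_{1,1}$ distinct (possible since $r(F_{1,1}) = r-2 \geq 2$), $b \in F_{1,2}$, and a red point $c \in F_{2,1}$. Form $P = \cl(\{a_1, b, c\})$, a rank-$3$ affine flat; by Lemma~\ref{planeintersect} (with $X_k = X_1$ playing the role of the hyperplane meeting the triple in two points, after checking the hypotheses), $P$ meets each $F_{i,j}$ in exactly one point. Its three points in column $1$ lie in $F_{1,1} \cup F_{2,1} \cup F_{3,1}$; the point in $F_{1,1}$ is green, the point in $F_{2,1}$ is the red point $c$, and the point in $F_{3,1}$ is red (as $F_{3,1}$ is monochromatic red). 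So inside $P$ we already see both colors with rank-$2$ green part. Iterating with a second plane sharing a line with $P$ and using Lemma~\ref{planeintersect} again, I would build a rank-$3$ flat on which green and red each span rank $3$. The main obstacle — and where I would spend the most care — is the bookkeeping of exactly which of the small flats $F_{i,j}$ are forced monochromatic versus merely ``red'' (rank $r-2$ red but possibly with green points), since Lemma~\ref{affinefullrank} only gives that one color attains full rank, not monochromaticity; getting monochromaticity requires, each time, that the \emph{other} flats in the relevant hyperplane already carry enough of the opposite color. I expect the argument to split into a short case analysis on whether $F_{3,3}$ (or $F_{1,3}$) is red or green, each case closed by exhibiting the forbidden rank-$3$ induced restriction via the plane-intersection lemmas.
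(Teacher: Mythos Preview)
Your opening is right and matches the paper: assuming $Y_1,Y_2$ red, you correctly deduce that $Y_1-F_{1,1}$ and $Y_2-F_{1,2}$ are monochromatic red, so $F_{2,1},F_{3,1},F_{2,2},F_{3,2}$ are all monochromatic red. That is exactly how the paper begins.

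The gap is in how you generate the contradictory plane. Your plane $P=\cl(\{a_1,b,c\})$ with $a_1\in F_{1,1}$, $b\in F_{1,2}$, $c\in F_{2,1}$ has its only \emph{known} green points sitting on the line $P\cap X_1$, so you only get green rank~$2$ --- as you yourself note. Your proposed remedy (iterate with a second plane, then case-split on the color of $F_{3,3}$ or $F_{1,3}$) is both vague and unnecessary, and it is not clear it terminates without circularity: nothing in your setup forces a green point into $P\cap(Y_3-F_{1,3})$.

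What you are missing is the single extra input the paper uses: the standing hypothesis $r(G)=r$. Since every green point outside $X_1$ lies in $(Y_1-F_{1,1})\cup(Y_2-F_{1,2})\cup(Y_3-F_{1,3})$ and the first two sets are monochromatic red, there must exist a green point $e\in Y_3-F_{1,3}$. Now take green $f\in F_{1,1}$ and $g\in F_{1,2}$ and form $\cl(\{e,f,g\})$. By Lemma~\ref{planeintersect} this plane meets every $F_{i,j}$ once; the three generators give green rank~$3$, while the points in $F_{2,1},F_{2,2},F_{3,2}$ (all red) give red rank~$3$ --- done in one step, no iteration or case analysis. The fix is simply to choose all three generators of the plane to be green, using $r(G)=r$ to secure the one outside $X_1$.
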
 
Assume that $Y_1$ and $Y_2$ are both red. Then $Y_1 - F_{1,1}$ and $Y_2 - F_{1,2}$ are monochromatic red. As $r(G) = r$, there is a green element $e$ in $Y_3 - F_{1,3}$. Let $f$ and $g$ be green elements in $F_{1,1}$ and $F_{1,2}$, respectively. Consider $\cl(\{e,f,g\})$. By Lemma~\ref{planeintersect} this plane will contain red points in $F_{2,1}, F_{2,2},$ and $F_{3,2}$. Therefore $r(G\cap \cl(\{e,f,g\})) = r(R\cap \cl(\{e,f,g\})) = 3$, a contradiction. Thus \ref{affineq3case1} holds.
  
\begin{sublemma} \label{q32h}
    There cannot be at least two red hyperplanes or at least two green hyperplanes in $\mathbf{Y}$. 
\end{sublemma}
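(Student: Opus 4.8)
The aim is to prove \ref{q32h} and thereby finish Theorem~\ref{affinetargetq3}: since by Lemma~\ref{affinefullrank} each of $Y_1,Y_2,Y_3$ is green or red, two of them share a colour, so once \ref{q32h} is known we have the desired contradiction. Because a $2$-colouring of $AG(r-1,q)$ is an affine target exactly when its colour-reversal is, and reversing colours here merely interchanges the green hyperplane $X_1$ with the red hyperplane $X_2$ (a relabelling of $\mathbf{X}$), the statement that $\mathbf{Y}$ has no two red hyperplanes follows from the statement that it has no two green ones. So the plan is to assume $Y_1$ and $Y_2$ are green and derive a contradiction.

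The main device is a \emph{monochromatic slab} observation: if $H'$ is a hyperplane of a flat $H$, the flat $H$ has colour $c$, and $H'$ has colour $c'\neq c$, then every point of $H\setminus H'$ has colour $c$, since a point of colour $c'$ there would span $H$ in colour $c'$ (here we use Lemma~\ref{affinefullrank}, so that a flat has a single colour). In particular a green hyperplane of $AG(r-1,3)$ contains at most one red rank-$(r-2)$ flat, and dually. Since $X_2$ is red, at most one of $F_{2,1},F_{2,2},F_{2,3}$ is green, hence at most one of $F_{2,1},F_{2,2}$; relabelling $Y_1\leftrightarrow Y_2$ I may assume $F_{2,1}$ is red, and then the slab observation, applied to the green hyperplane $Y_1$ and its red hyperplane $F_{2,1}$, shows $F_{1,1}$ and $F_{3,1}$ are monochromatic green.

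Next I would run a case analysis on the colour of $F_{2,2}$ and, inside each case, on the colours of $F_{2,3}$, $X_3$ and $Y_3$, using Sublemma~\ref{affineq3case1} (and its colour-reversal) to pin down the colours of the $Y_j$ quickly. Red sub-flats of the green hyperplanes $Y_1,Y_2$ (and of $Y_3$, once its colour is known) feed the slab observation to create monochromatic green $F_{i,j}$'s; green sub-flats of the red hyperplane $X_2$ feed it to create monochromatic red $F_{i,j}$'s. In each generic case this yields at least three monochromatic green flats and at least three monochromatic red flats among the $F_{i,j}$, placed so that neither family lies entirely over one row or one column of the grid. Then, choosing green points $x,y$ in two monochromatic green flats sharing a common member of $\mathbf{X}$ and a green point $z$ in a monochromatic green flat of another row, we have $r(\{x,y,z\})=3$, and Lemma~\ref{planeintersect} (or its $\mathbf{Y}$-version, legitimate because the hypotheses are symmetric in $\mathbf{X}$ and $\mathbf{Y}$) shows the plane $P=\cl(\{x,y,z\})$ meets each $F_{i,j}$ in exactly one point. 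Hence $P$ picks up one point from each monochromatic green flat and one from each monochromatic red flat, and by the spread-out placement these form three non-collinear green points and three non-collinear red points of $P$. Thus $r(G\cap P)=r(R\cap P)=3$, contradicting the standing assumption in the proof of Theorem~\ref{affinetargetq3} that no rank-$3$ flat of $G$ is full-rank in both colours.

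The delicate point — and the thing that drives the case analysis — is the exceptional branch in which row $X_2$ consists entirely of red flats, so no monochromatic red flat comes from $X_2$ directly. There $F_{1,1},F_{3,1},F_{1,2},F_{3,2}$ are all monochromatic green, forcing $X_1$ and $X_3$ to be green; if in addition $F_{1,3}$ and $F_{3,3}$ are monochromatic green (for instance when $Y_3$ is green), then $X_1$ and $X_3$ are monochromatic green, so $R\subseteq X_2$, contradicting $r(R)=r$ from Lemma~\ref{affinenontarget}. Otherwise $Y_3$ is red and at least one of $F_{1,3},F_{3,3}$ carries a red point; unwinding the possibilities for those two flats with the slab observation and with $r(G)=r$ restores enough monochromatic red flats — now coming from $Y_3$ or from a green sub-hyperplane of $X_2$ — to rerun the plane argument. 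Making sure that in every branch the plane $P$ acquires three non-collinear points of each colour, given that a line of $P$ has only three points, is the part that needs the most care.
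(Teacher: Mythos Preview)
Your overall strategy---build a rank-$3$ affine plane via Lemma~\ref{planeintersect} that has three non-collinear points of each colour---is exactly the paper's, and your colour-reversal symmetry argument is fine. But you are working much harder than necessary: the paper dispatches \ref{q32h} in a single paragraph with a single plane and no branching.

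The simplification you are missing is the \emph{contrapositive} use of \ref{affineq3case1}. Assume (as the paper does) $Y_1,Y_2$ are red. Since $X_1$ is green, at most one of $F_{1,1},F_{1,2},F_{1,3}$ is red; and by \ref{affineq3case1}, $F_{1,1}$ and $F_{1,2}$ cannot both be green. So exactly one of $F_{1,1},F_{1,2}$ is red, say $F_{1,2}$. One slab observation now gives $F_{1,1},F_{1,3}$ monochromatic green and $F_{2,1},F_{3,1}$ monochromatic red. Pick $e\in F_{1,1}$ green, $f\in F_{1,2}$ red, and (since $r(G)=r$) a green point $g\notin X_1$; necessarily $g\in F_{2,2}\cup F_{2,3}\cup F_{3,2}\cup F_{3,3}$ because $Y_1\setminus F_{1,1}$ is all red. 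The plane $\cl(\{e,f,g\})$ has $e,f\in X_1$, so Lemma~\ref{planeintersect} applies directly; its points in $F_{1,3},F_{2,1},F_{3,1}$ together with $e,f,g$ give three non-collinear points of each colour. Done.

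Two concrete remarks on your sketch. First, your ``exceptional branch'' (all of $F_{2,1},F_{2,2},F_{2,3}$ red) never occurs: by the colour-reversal of \ref{affineq3case1}, if $F_{2,1}$ and $F_{2,2}$ were both red then one of $Y_1,Y_2$ would be red. So your case analysis on the colour of $F_{2,2}$ collapses immediately. Second, you do not need three \emph{monochromatic} flats of each colour; two monochromatic flats of one colour plus a single point of that colour supplied by $r(G)=r$ (or $r(R)=r$) suffice, and the third point of the plane is taken in a flat that is merely coloured, not monochromatic. Insisting on three monochromatic flats of each colour is what drags you into casework on $F_{2,3},X_3,Y_3$ that the paper never needs.
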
 
Assume that $Y_1$ and $Y_2$ are red. As $X_1$ is green, at most one of $F_{1,1}, F_{1,2}$, and $F_{1,3}$ is red. By \ref{affineq3case1}, we may assume that $F_{1,2}$ is red. Then $X_1 - F_{1,2}$ is monochromatic green, so $Y_1 - F_{1,1}$ is monochromatic red. As $r(G) = r$, there is a green point $g$ that is not in $X_1$. Then $g \in F_{2,2} \cup F_{2,3} \cup F_{3,2} \cup F_{3,3}$. Let $e$ be a green point in $F_{1,1}$, and $f$ be a red point in $F_{1,2}$. Consider $\cl(\{e,f,g\})$. By Lemma~\ref{planeintersect}, this plane will contain red points in $F_{2,1}$ and $F_{3,1}$, and a green point in $F_{1,3}$. Therefore, $r(G\cap \cl(\{e,f,g\})) = r(R \cap \cl(\{e,f,g\})) = 3$, a contradiction. By symmetry, there cannot be two green hyperplanes in \textbf{Y}. Thus \ref{q32h} holds.

We conclude that there are no possible colorings of the hyperplanes in \textbf{Y}, a contradiction. 
\end{proof}

\begin{proof}[Proof of Theorem \ref{affinetargetsq4}]
Assume that $G$ is an affine target over $GF(q)$ for $q \geq 4$. If there is an affine flat $F$ such that $AG(r-1,q)|(G\cap F)$ is any of $U_{2,2},U_{2,3},\dots, U_{2,q-3},$ or $ U_{2,q-2}$, then $r(G\cap F) = r(R\cap F)$, contradicting Lemma~\ref{affinefullrank}.
  
Let $(G,R)$ be a $2$-coloring of $AG(r-1,q)$. Suppose that $G$ is a rank-$r$ minimal affine-non-target that does not contain $U_{2,2},U_{2,3},\dots, U_{2,q-3},$ or $ U_{2,q-2}$ as an induced restriction. Then $r(G) \geq 3$. By Lemma~\ref{affinefullrank}, $r(R) = r$. Now, by Lemma~\ref{qaryDisjointHP}, there is a red hyperplane $X_1$ that is disjoint from a green hyperplane $X_2$. Let $\{X_1, X_2,\dots, X_q\}$ and $\{Y_1,Y_2,\dots, Y_q\}$ be disjoint sets, $\textbf{X}$ and $\textbf{Y}$, each consisting of $q$ disjoint hyperplanes in $AG(r-1,q)$. By Lemma~\ref{affinehyperplaneint}, $r(F_{i,j}) = r - 2$ for all $i$ and $j$.  We show there are no
possible colorings of the hyperplanes in $\textbf{Y}$. 
\setcounter{theorem}{19}
\begin{sublemma}\label{affinesame} 
   There is at least one green hyperplane and at least one red hyperplane in \textbf{Y}. 
\end{sublemma}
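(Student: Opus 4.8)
The plan is to assume the sublemma fails and derive a contradiction. Suppose every hyperplane of $\textbf{Y}$ has the same colour; by the symmetry between the two colours --- the hypotheses on $G$, including the list $U_{2,2},\dots,U_{2,q-2}$ of forbidden restrictions, are symmetric in green and red --- we may assume every hyperplane of $\textbf{Y}$ is green and work with the red hyperplane $X_1$; the case where every $Y_j$ is red is handled identically, with $X_1$ replaced by the green hyperplane $X_2$. The first step is to note that at most one of the pairwise-disjoint rank-$(r-2)$ flats $F_{1,1},\dots,F_{1,q}$ contained in $X_1$ can be green: if two were green, then a green basis of one together with a green point of the other would give $r(G\cap X_1)=r-1$, making $X_1$ a green flat as well as a red flat, which is impossible (Lemma~\ref{affinefullrank}). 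Relabelling $\textbf{Y}$, we may therefore assume $F_{1,1},\dots,F_{1,q-1}$ are all red.

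The second step shows that $F_{i,j}$ is monochromatic green whenever $i\geq 2$ and $j\leq q-1$. For such a $j$, the flat $F_{1,j}=X_1\cap Y_j$ is a red rank-$(r-2)$ subflat of the green hyperplane $Y_j$; a red point of $Y_j-F_{1,j}$ together with a red basis of $F_{1,j}$ would span $Y_j$, making $Y_j$ red as well as green, again impossible. Hence $Y_j-F_{1,j}=F_{2,j}\cup\dots\cup F_{q,j}$ is monochromatic green. Two things follow. First, for each $i\geq 2$ the hyperplane $X_i$ contains the $q-1\geq 3$ disjoint monochromatic green flats $F_{i,1},\dots,F_{i,q-1}$, so $r(G\cap X_i)=r-1$ and every such $X_i$ is green. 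Second, these monochromatic green flats together are exactly the set $E(AG(r-1,q))-(X_1\cup Y_q)$, so every red point of $AG(r-1,q)$ lies in $X_1\cup Y_q$.

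The final step exhibits a forbidden line. The red flat $F_{1,1}$ contains a red point $a$, and $a\notin Y_q$ because $Y_1\cap Y_q=\emptyset$. Since $r(R)=r>r-1=r(X_1)$, not every red point lies in $X_1$, so there is a red point $b$ in $Y_q-X_1$. The line $\ell=\cl(\{a,b\})$ lies in neither $X_1$ (as $b\notin X_1$) nor $Y_q$ (as $a\notin Y_q$), so it meets each of these hyperplanes in exactly one point, namely $a$ and $b$. Therefore the other $q-2\geq 2$ points of $\ell$ avoid $X_1\cup Y_q$ and hence are green, while $a$ and $b$ are red; so $AG(r-1,q)|(G\cap\ell)\cong U_{2,q-2}$, contradicting the hypothesis on $G$. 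This contradiction establishes \ref{affinesame}.

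The heart of the argument is the bookkeeping in the second step that forces every red point into the union $X_1\cup Y_q$ of two hyperplanes; once that is in hand, the forbidden line $U_{2,q-2}$ appears immediately because $q\geq 4$. The point that needs a little care is the reduction at the very start: one must check that swapping the two colours preserves every standing hypothesis, so that the cases ``all of $\textbf{Y}$ green'' and ``all of $\textbf{Y}$ red'' are genuine mirror images.
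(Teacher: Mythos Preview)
Your proof is correct and follows essentially the same approach as the paper's. The paper argues the ``all of $\textbf{Y}$ red'' case directly (using the green hyperplane $X_2$) and then invokes symmetry, whereas you invoke symmetry first and argue the ``all of $\textbf{Y}$ green'' case using $X_1$; up to this colour swap the two arguments are identical, including the confinement of the minority colour to $X_1\cup Y_q$ and the construction of the forbidden line through one point of $F_{1,1}$ and one point of $Y_q-X_1$. Your final step, arguing directly that $\ell$ meets each of $X_1$ and $Y_q$ in a single point, is marginally cleaner than the paper's appeal to Lemma~\ref{lineintersect}, but the content is the same.
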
 
    Assume that all members of \textbf{Y} are red. As $X_2$ is green, we may assume that $F_{2,1},F_{2,2},\dots, F_{2,q-2}$, and $F_{2,q-1}$ are green. Then $Y_k - F_{2,k}$ is monochromatic red for all $k$ in $\{1,2,\dots,q-1\}$. As $r(G) = r$, there is a green element $e$ in $Y_q-F_{2,q}$. We may assume that $e \in F_{3,q}$. Let $f$ be a green element in $F_{2,1}$. Consider $\cl(\{e,f\})$. By Lemma~\ref{lineintersect}, this line will contain red points in $Y_2 - (F_{2,2} \cup F_{3,2})$ and $Y_3 - (F_{2,3} \cup F_{3,3})$. However, this gives the contradiction that $r(G\cap \cl(\{e,f\})) = r(R \cap \cl(\{e,f\})) = 2$. By symmetry, not all members of \textbf{Y} are green. Thus \ref{affinesame} holds.
    
\begin{sublemma} \label{2green2red}
There cannot be at least two green hyperplanes and at least two red hyperplanes in \textbf{Y}. 
\end{sublemma}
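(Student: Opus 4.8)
The plan is to produce a line $\ell$ of $AG(r-1,q)$ with $r(G\cap\ell)=r(R\cap\ell)=2$; since this contradicts Lemma~\ref{affinefullrank} (equivalently, it forces the forbidden restriction $U_{2,k}$ for some $k$ with $2\le k\le q-2$), the assumed colouring of $\mathbf Y$ cannot occur. So suppose for a contradiction that $Y_1,Y_2$ are green and $Y_3,Y_4$ are red; this requires $q\ge 4$, which is our hypothesis. The basic tool is that if $Z$ is a red hyperplane of $AG(r-1,q)$ then $G\cap Z$ lies in a rank-$(r(Z)-1)$ flat $H_Z$ and $Z-H_Z$ is monochromatic red, and dually for green hyperplanes; these ``monochromatic complements'' will supply the green and red points of $\ell$.

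First I would pin down the relevant cells by two symmetric observations. Since $X_1$ is red, at most one of $F_{1,1},\dots,F_{1,q}$ is green, so one of $F_{1,1},F_{1,2}$ is red; after interchanging $Y_1,Y_2$ if needed, take $F_{1,1}$ red. As $Y_1$ is green this forces $R\cap Y_1\subseteq F_{1,1}$, so $Y_1-F_{1,1}$ is monochromatic green, and hence every cell $F_{i,1}$ with $i\ne 1$ is monochromatic green. Dually, since $X_2$ is green, at most one of $F_{2,1},\dots,F_{2,q}$ is red, so one of $F_{2,3},F_{2,4}$ is green; after interchanging $Y_3,Y_4$ if needed, take $F_{2,3}$ green, so that $Y_3-F_{2,3}$ is monochromatic red and every cell $F_{i,3}$ with $i\ne 2$ is monochromatic red.

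Then I would build $\ell$ as the closure of two points in general position, so that Lemma~\ref{lineintersect} makes $\ell$ a transversal of $\mathbf X$ and $\mathbf Y$ while forcing it to meet two green cells and two red cells. Precisely, choose a column $c\notin\{1,3\}$ with $F_{1,c}\ne H_{X_1}$ and a column $d\notin\{1,3\}$ with $F_{2,d}\ne H_{X_2}$ and $d\ne c$; since at most one column is ``exceptional'' for $X_1$ and at most one for $X_2$, such a choice exists when $q\ge 5$, and, after a short separate check, also when $q=4$. Pick a red point $p\in F_{1,c}-H_{X_1}$ and a green point $g\in F_{2,d}-H_{X_2}$, and set $\ell=\cl(\{p,g\})$. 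Then $\ell$ meets $X_1$ only in $p$ and $X_2$ only in $g$, so its point on $Y_1$ lies in a cell $F_{k,1}$ with $k\notin\{1,2\}$, hence is green, and its point on $Y_3$ lies in a cell $F_{k',3}$ with $k'\notin\{1,2\}$, hence is red; together with $p$ and $g$ these are four distinct points of $\ell$, in the four distinct columns $Y_c,Y_d,Y_1,Y_3$, two green and two red, giving $r(G\cap\ell)=r(R\cap\ell)=2$, the required contradiction.

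The place I expect to be delicate is the case $q=4$: the column counts above become tight and the reduction to a transversal loses all slack, because a line of $AG(r-1,4)$ has only four points. The problematic subcase is when the exceptional hyperplanes $H_{X_1}$ and $H_{X_2}$ are aligned with one and the same column of $\mathbf Y$; there the monochromatic complements $X_1-H_{X_1}$ and $X_2-H_{X_2}$ refine along the cell partition, so $X_1$ acquires a whole row of monochromatic red cells and $X_2$ a whole row of monochromatic green cells, and one instead threads $\ell$ through one such red cell and one such green cell, chosen in different rows and different columns, using in addition that in this configuration $Y_4$ has a green cell and $Y_2$ a red cell. It is also worth recording, as a sanity check on the reduction, that every line of $AG(r-1,q)$ not contained in a member of $\mathbf X$ or $\mathbf Y$ is automatically a transversal of both, and that (by Lemma~\ref{affinefullrank}) each $X_i$ and $Y_j$ is monochromatic outside a sub-hyperplane, so no line inside one of them carries two points of each colour; hence restricting the search for $\ell$ to transversals costs nothing.
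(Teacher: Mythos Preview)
Your transversal-line argument is the same device the paper uses, and for $q\ge 5$ it goes through cleanly: once $Y_1-F_{1,1}$ is monochromatic green and $Y_3-F_{2,3}$ is monochromatic red, any line $\cl(\{p,g\})$ with $p\in F_{1,c}$ red, $g\in F_{2,d}$ green, and $c,d,1,3$ pairwise distinct picks up the extra green point on $Y_1$ and the extra red point on $Y_3$ exactly as you describe.

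The genuine gap is the $q=4$ case. Your ``short separate check'' is only a sketch, and the fix you outline---threading $\ell$ through differently chosen cells, ``using in addition that $Y_4$ has a green cell and $Y_2$ a red cell''---is not justified as written. Take the problematic subcase $H_{X_1}=F_{1,2}$ and $H_{X_2}=F_{2,2}$. You have not shown which cell of $Y_2$ is red, and no direct line choice works until you do. What actually happens is a cascade of rank constraints: $F_{2,4}$ is monochromatic green, so $G\cap Y_4=F_{2,4}$ and $F_{3,4},F_{4,4}$ are monochromatic red; then $X_3$ carries the monochromatic red cells $F_{3,3},F_{3,4}$ and the monochromatic green cell $F_{3,1}$, forcing $X_3$ red with $G\cap X_3=F_{3,1}$, whence $F_{3,2}$ is monochromatic red; symmetrically $F_{4,2}$ is monochromatic red. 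Now the green hyperplane $Y_2$ contains two red rank-$(r-2)$ cells, and \emph{that} is the contradiction---a rank argument, not a line one. The subcase $H_{X_1}=F_{1,4}$, $H_{X_2}=F_{2,4}$ collapses the same way on $Y_4$.

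The paper sidesteps this patch by organizing the case split so that it works uniformly for all $q\ge 4$. After normalizing $F_{1,2}$ red and $F_{2,3}$ green, it runs the line argument when $F_{1,1},F_{2,1}$ are both green (taking $e\in F_{1,4}$, $f\in F_{2,1}$) or, dually, when $F_{1,4},F_{2,4}$ are both red; in the remaining case it observes that $Y_1-(F_{1,1}\cup F_{2,1})$ is monochromatic green and $Y_4-(F_{1,4}\cup F_{2,4})$ is monochromatic red, whence $r(G\cap X_3)=r(R\cap X_3)=r-1$. That final step is exactly the rank contradiction your $q=4$ subcase needs, so you may find it cleaner to adopt the paper's split rather than single out $q=4$.
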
 
  
Let $Y_1$ and $Y_2$ be green and let $Y_3$ and $Y_4$ be red. As $X_1$ is red, at most one of $F_{1,1},F_{1,2},\dots, F_{1,q-1}, \text { and } F_{1,q}$ is green. This implies that $F_{1,1}$ or $F_{1,2}$ is red, so we may assume the latter. Then $Y_2 - F_{1,2}$ is monochromatic green. Similarly, as $X_2$ is green,  $F_{2,3}$ or $F_{2,4}$, say $F_{2,3}$, is green. Then $Y_3 - F_{2,3}$ is monochromatic red. Assume $F_{1,1}$ and $F_{2,1}$ are green. Then $X_1 - F_{1,1}$ is monochromatic red. Let $e$ be a red point in $F_{1,4}$ and let $f$ be a green point in $F_{2,1}$. Consider $\cl(\{e,f\})$.  Then, by Lemma~\ref{lineintersect}, this line will have a green point in $Y_2 - (F_{1,2} \cup F_{2,2})$ and a red point in $Y_3 - (F_{1,3} \cup F_{2,3})$. Hence $r(G\cap \cl(\{e,f\})) = r(R\cap \cl(\{e,f\}))$, a contradiction. A symmetric argument holds when $F_{1,4}$ and $F_{2,4}$ are both red. Therefore, either $F_{1,1}$ or $F_{2,1}$ is red, and either $F_{1,4}$ or $F_{2,4}$ is green. This implies that $Y_1 - (F_{1,1} \cup F_{2,1})$ is monochromatic green and $Y_4 - (F_{1,4} \cup F_{2,4})$ is monochromatic red. Hence $r(G \cap X_3) = r(R \cap X_3) = r-1$, a contradiction. Thus \ref{2green2red} holds.

\begin{sublemma}\label{affineq41h}
    There cannot be exactly one red hyperplane or exactly one green hyperplane in \textbf{Y}. 
\end{sublemma}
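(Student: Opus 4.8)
Since $AG(r-1,q)|G$ is an affine target if and only if $AG(r-1,q)|R$ is, the green--red symmetry reduces \ref{affineq41h} to ruling out the case that $\mathbf{Y}$ contains \emph{exactly one red} hyperplane; relabel so that $Y_1$ is red and $Y_2,\dots ,Y_q$ are green. Now $Y_1$ is red and $Y_2$ is green and they are disjoint, so the arguments of \ref{affinesame} and \ref{2green2red} apply verbatim with the roles of $\mathbf{X}$ and $\mathbf{Y}$ interchanged: $\mathbf{X}$ contains at least one red and at least one green hyperplane but not at least two of each. As $X_1$ is red and $X_2$ is green, this leaves two cases: \textbf{(I)} $X_1$ is the only red hyperplane of $\mathbf{X}$, so $X_2,\dots,X_q$ are green; and \textbf{(II)} $X_2$ is the only green hyperplane of $\mathbf{X}$, so $X_1,X_3,\dots,X_q$ are red.

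The basic tool, already used in \ref{2green2red}, is this: if $F_{i,j}$ is a red flat that is a hyperplane of a green hyperplane $H\in\{X_i,Y_j\}$, then $H-F_{i,j}$ is monochromatic green, since a red point of $H-F_{i,j}$ together with the red points spanning $F_{i,j}$ would span $H$; dually, removing a green flat from a red hyperplane leaves a monochromatic red set. Also each red hyperplane of $\mathbf{X}$ meets at most one of $Y_1,\dots ,Y_q$ in a green flat (two disjoint monochromatic-green hyperplanes of it would span it), and each green hyperplane meets at most one in a red flat. In \textbf{Case (I)} the plan is first to show that no $F_{a,b}$ with $a,b\ge 2$ is a red flat. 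If one is, these facts force $F_{a,j}$ to be monochromatic green for $j\ne b$, whence $F_{a,1}$ is monochromatic green and (as $Y_1$ is red) $F_{i,1}$ is a red flat for $i\ne a$, so each $X_i-F_{i,1}$ with $i\in\{2,\dots,q\}\setminus\{a\}$ is monochromatic green; thus every $F_{i,j}$ with $i,j\ge2$ other than $F_{a,b}$ is monochromatic green. Picking a red point $e$ of $F_{1,1}$ (a red flat) and a red point $f$ of $F_{a,b}$, Lemma~\ref{lineintersect} shows $\cl(\{e,f\})$ meets each $X_i$ and each $Y_j$ exactly once, so its remaining $q-2\ge2$ points all lie in monochromatic green cells; hence this $q$-point line has at least two points of each colour, so $AG(r-1,q)|(G\cap\cl(\{e,f\}))\cong U_{2,k}$ for some $k$ with $2\le k\le q-2$, a forbidden induced restriction. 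So every $F_{i,j}$ with $i,j\ge2$ is a green flat, and a short refinement of the same bookkeeping shows each such cell is in fact monochromatic green, with at most one exception $F_{\nu,\mu}$ (where $F_{\nu,1}$ and $F_{1,\mu}$ are the green flats of $Y_1$ and $X_1$, if these exist with $\nu,\mu\ge2$). Choosing red points $e,f$ in red flats of row $1$ and column $1$, in distinct rows and columns, with $\cl(\{e,f\})$ missing the proper flat $F_{\nu,\mu}$ (possible since there are several such flats to draw from and $F_{\nu,\mu}$ is proper), Lemma~\ref{lineintersect} again produces a line with at least two points of each colour, a contradiction.

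\textbf{Case (II)} is handled in the same spirit: first show that no $F_{2,b}$ with $b\ge2$ is a red flat, so the cells of row $2$ are green flats; then track the monochromatic pieces $X_i-F_{i,j}$ coming from the red rows $X_1,X_3,\dots ,X_q$ and the green columns $Y_2,\dots ,Y_q$ to pin the colouring down enough to build, via Lemma~\ref{lineintersect}, a line through two points of each colour, again contradicting the absence of the forbidden restrictions. With \ref{affinesame}, \ref{2green2red} and \ref{affineq41h} established, every hyperplane of $\mathbf{Y}$ is green or red (by Lemma~\ref{affinefullrank}), at least one of each occurs, not two of each occurs, and exactly one of each does not occur; hence $\mathbf{Y}$ admits no admissible colouring, which is the contradiction completing the proof of Theorem~\ref{affinetargetsq4}.

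The main obstacle is that, once $r\ge4$, a red (or green) flat need not be monochromatic, so colours of points cannot be read off colours of flats; the argument therefore hinges on isolating the genuinely monochromatic cells $H-F_{i,j}$, on a handful of sub-cases according to where the non-monochromatic cells sit and which hyperplane is the exceptional one, and on a routine genericity count to choose the two points defining each line. When $r=3$ every $F_{i,j}$ is a single point and these complications disappear.
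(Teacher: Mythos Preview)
Your approach can be completed, but it takes a much longer route than the paper's.  The paper never determines the colours of $X_3,\ldots,X_q$; having assumed $Y_1$ is the only red member of $\mathbf{Y}$, it works entirely from the single red row $X_1$.  Since $X_1$ is red, at most one $F_{1,j}$ is green, and the paper splits on whether the green one (if any) has $j\ge 2$.  If $F_{1,2},\ldots,F_{1,q}$ are all red, each $Y_k-F_{1,k}$ with $k\ge 2$ is monochromatic green, and the line through a red point of $Y_1-F_{1,1}$ and a red point of $F_{1,2}$ already gives two points of each colour.  If instead $F_{1,2}$ is green, then $X_1-F_{1,2}$ is monochromatic red and each $Y_k-F_{1,k}$ with $k\ge 3$ is monochromatic green; a line through a red point of $Y_1-F_{1,1}$ and a green point of $Y_2-F_{1,2}$ lying in distinct members of $\mathbf{X}$ then picks up a red point in $X_1$ and a green point outside $X_1\cup X_2$.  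That is the whole proof.

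By contrast, your plan first invokes the swapped version of \ref{2green2red} to force $\mathbf{X}$ into Cases (I) and (II), and then Case~(I) requires chasing monochromatic cells across the entire $q\times q$ grid, isolating the exceptional cell $F_{\nu,\mu}$, and a genericity argument (which you do not actually carry out) to steer the line away from it.  Case~(II) you leave almost entirely to the reader; it does work --- indeed it collapses immediately, since each of the $(q-1)^2$ cells $F_{i,j}$ with $i\ne 2$ and $j\ne 1$ lies in a red row and a green column, so at most $q-1$ of them are green and at most $q-1$ are red, impossible for $q\ge 4$ --- but your outline does not supply this.  Nothing in your plan is wrong, yet it trades the paper's two one-line constructions for a substantial case analysis with several loose ends still to tie up.
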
 
  
Assume that $Y_1$ is red and $Y_2,Y_3,\dots, Y_{q-1}$, and $Y_q$ are green. As $X_1$ is red, at most one of $F_{1,1},F_{1,2},\dots, F_{1,q-1}, \text {and } F_{1,q}$ is green. First assume that $F_{1,2},F_{1,3},\dots, F_{1,q-1}$ and $F_{1,q}$ are red. Then $Y_k - F_{1,k}$ is monochromatic green for all $k$ in $\{2,3,\dots,q\}$. By Lemma~\ref{affinefullrank}, $r(R) = r$, so there is a red point $e$ in $Y_1 - F_{1,1}$. We may assume $e$ is in $F_{2,1}$. Let $f$ be a red point in $F_{1,2}$ and consider $\cl(\{e,f\})$. By Lemma~\ref{lineintersect}, this line will have green points in $X_3 - (F_{3,1} \cup F_{3,2})$ and $X_4 - (F_{4,1} \cup F_{4,2})$. Therefore, $r(G \cap \cl(\{e,f\})) = r (R \cap \cl(\{e,f\})) = 2$, a contradiction. 
  
Now assume that $F_{1,2}$ is green. Then $X_1 - F_{1,2}$ is monochromatic red. Hence $Y_k - F_{1,k}$ is monochromatic green for all $k$ in $\{3,4,\dots,q\}$. Let $e$ be a red element in $Y_1 - F_{1,1}$ and $f$ be a green element in $Y_2 - F_{1,2}$ such that $|X_i \cap \{e,f\}| \leq 1$ for all $i$ in $\{2,3,\dots,q\}$. As $Y_1$ is red and $Y_2$ is green, such a pair of points exists. We may assume that $e \in F_{2,1}$ and $f \in F_{3,2}$. Then, by Lemma~\ref{lineintersect}, $\cl(\{e,f\})$ will contain a red element in $X_1 - (F_{1,1} \cup F_{1,2})$ and a green element in $X_4 - (F_{4,1} \cup F_{4,2})$, a contradiction. By symmetry, there cannot be exactly one green hyperplane in \textbf{Y}. Thus \ref{affineq41h} holds.

We conclude that there are no possible colorings of the hyperplanes in \textbf{Y}, a contradiction. 
\end{proof}
\iffalse
Put in dissertation

As the contraction of an element in $AG(r-1,q)$ results in $PG(r-2,q)$, the class of affine matroids is not closed under taking contractions. Thus we do not consider the forbidden induced minors for affine targets.
\fi

\end{document}